\documentclass[10pt]{amsart}

\usepackage{comment}

\setcounter{section}{-1}

\usepackage{amsmath}
  \usepackage{paralist}
  \usepackage{graphics} 
  \usepackage{epsfig} 
\usepackage{graphicx}  \usepackage{epstopdf}

 \usepackage[colorlinks=true]{hyperref}
\hypersetup{urlcolor=blue, citecolor=red}

  \textheight=8.2 true in
   \textwidth=5.0 true in
    \topmargin 30pt
     \setcounter{page}{1}



\newtheorem{theorem}{Theorem}[section]
\newtheorem{corollary}[theorem]{Corollary}

\newtheorem*{main*}{Main Theorem}
\newtheorem{lemma}[theorem]{Lemma}
\newtheorem{proposition}[theorem]{Proposition}

\theoremstyle{definition}
\newtheorem{definition}[theorem]{Definition}
\newtheorem{remark}[theorem]{Remark}

\newtheorem*{theorems}{Theorem}

\newtheorem*{TheoremA}{Theorem A}
\newtheorem*{TheoremB}{Theorem B}
\newtheorem*{TheoremC}{Theorem C}
\newtheorem*{TheoremD}{Theorem D}
\newtheorem*{CorollaryA1}{Corollary A.1}
\newtheorem*{CorollaryA2}{Corollary A.2}
\newtheorem*{CorollaryB1}{Corollary B.1}
\newtheorem*{CorollaryC1}{Corollary C.1}
\newtheorem*{CorollaryC2}{Corollary C.2}
\newtheorem*{CorollaryC3}{Corollary C.3}

\newcommand{\ep}{\varepsilon}

\title[Unstable Entropies and  Variational Principle]
      {Unstable Entropies and  Variational Principle for Partially Hyperbolic Diffeomorphisms}

\def\a{\alpha}
\def\b{\beta}
\def\c{\gamma}   
\def\d{\delta}   
\def\l{\lambda}   
 \def\e{\varepsilon}

\def\ae{-\text{a.e.}\ }

\def\B{{\mathcal B}}
\def\P{{\mathcal P}}
\def\Q{{\mathcal Q}}

\def\diam{\mathop{\hbox{{\rm diam}}}}

\def\loc{{\mathop{\hbox{\footnotesize  \rm loc}}}}

\def\top{{\mathop{\hbox{\footnotesize \rm top}}}}

\def\disp{\displaystyle}

\author[]{Huyi Hu, Yongxia Hua and Weisheng Wu}

\subjclass{}
 \keywords{}


\address{Department of Mathematics, Michigan State University, East Lansing, MI 48824, USA}
 \email{hhu@math.msu.edu}

\address{Department of mathematics, South university of Science and Technology of China, 518055 Shenzhen, P.R. China}
 \email{huayx@sustc.edu.cn}

\address{Department of Applied Mathematics, College of Science, China Agricultural University, Beijing, 100083, P.R. China}
 \email{wuweisheng@cau.edu.cn}

\begin{document}

\maketitle
\markboth{Unstable entropies and  variational principle}
{Huyi Hu, Yongxia Hua and Weisheng Wu}
\renewcommand{\sectionmark}[1]{}

\begin{abstract}
We study entropies caused by the unstable part of partially hyperbolic systems.
We define unstable metric entropy and unstable topological entropy,
and establish a variational principle for partially hyperbolic
diffeomorphsims, which states that the unstable topological entropy is
the supremum of the unstable metric entropy taken over all invariant
measures.  The unstable metric entropy for an invariant measure
is defined as a conditional entropy along unstable manifolds,
and it turns out to be the same as that given by Ledrappier-Young,
though we do not use increasing partitions.
The unstable topological entropy is defined
equivalently via separated sets, spanning sets and open covers along
a piece of unstable leaf, and it coincides with the unstable volume growth
along unstable foliation.
We also obtain some properties for the unstable metric entropy such as
affineness, upper semi-continuity and a version of
Shannon-McMillan-Breiman theorem.
\end{abstract}

\section{Introduction}
The difference between partially hyperbolic systems and hyperbolic systems
is the presence of the center direction in the former case.
The original motivation of the paper is to study some ergodic properties
of partially hyperbolic systems that \emph{arise from the hyperbolic part}.
Since entropies are the important invariants measuring the complexity
of the systems, they are good objects to start with.

It is generally agreed that entropies are caused by the expansive part of
dynamical systems.
There are some existing notions for such measurements, including the entropies
given by Ledrappier and Young (\cite{LY2}) from the measure theoretic point
of view and the unstable volume growth given by Hua, Saghin,
and Xia~(\cite{HSX}) from the topological point of view.
Another motivation of the paper is trying to put them in
a framework that is similar to the classical entropy theory.
In this paper we redefine the notion of \emph{unstable metric entropy}
$h_\mu^u(f)$, and define the \emph{unstable topological entropy} $h_{\top}^u(f)$,
and prove a \emph{variational principle} for them. Also,
for the unstable metric entropy, we provide a version of
\emph{Shannon-McMillan-Breiman theorem}.

The \emph{unstable metric entropy} for an invariant measure $\mu$
is defined by using $H_\mu(\vee_{i=0}^{n-1} f^{-i}\a|\eta)$,
where $\a$ is a finite measurable partition of the underlying manifold $M$, and
$\eta$ is a measurable partition consisting of local unstable leaves
that can be obtained by refining a finite partition into pieces of
unstable leaves (see details in Definition~\ref{Defuentropy}).
The definition is about the same as the classical metric entropy,
except for the conditional partition $\eta$, which is used
to eliminate the impact from center directions.
The entropy defined in \cite{LY2} can be regarded as that given by
$H_\mu(\xi|f\xi)$, where $\xi$ is an increasing partition, that is,
$\xi \ge f\xi$, subordinate to unstable manifolds.
We show in Theorem~A that it is identical to
the unstable metric entropy we defined.
We also prove that the unstable metric entropy map, as a function
from the set of all invariant measures to nonnegative real numbers,
is affine and upper semi-continuous (Proposition~\ref{Paffine} and
Proposition~\ref{Pusc}).

Generally speaking, a good notion of entropy should satisfy
a type of Shannon-McMillan-Breiman theorem.  In this paper
we provide a version of \emph{Shannon-McMillan-Breiman theorem
for unstable metric entropy} in Theorem~B.

We define \emph{unstable topological entropy} by using the growth rates
of the cardinality of $(n, \e)$ separated sets or spanning sets
of a local unstable leaf at every point $x$ then taking the supremum over
$x\in M$ (see Definition~\ref{Defutopent1}).
It measures the asymptotic rate of orbit divergence along unstable manifolds.
As the classical topological entropy (see Sections 7.1 and 7.2 in \cite{W}), the cardinality of $(n, \e)$
separated sets or spanning sets can be replaced by a subcover
of an open cover of the form $\vee_{i=0}^{n-1} f^{-i}{\mathcal U}$,
where ${\mathcal U}$ is an open cover of $M$ (Definition~\ref{Defutopent2}).
We show in Theorem~C that the unstable topological entropy we defined
coincides with the volume growth given in \cite{HSX}.

As same as the classical case, we can obtain a \emph{variational principle}
for unstable entropies (Theorem~D).  That is, the unstable topological entropy
is the supremum of unstable metric entropy taken over all
invariant probability measures, as well as all ergodic measures.

Ledrappier and Young (\cite{LY2}) introduced a hierarchy
of metric entropies $h_i=h_i(f)$,
each of which corresponds to a different Lyaponov exponent,
and is regarded as the entropy caused by different hierarchy of
unstable manifolds.  If there are $u$ different positive Lyapunov
exponents in the unstable direction of a partially hyperbolic system,
then $h_u(f)$ gives the unstable metric entropy we define in this paper.
The entropy has a simple form $H_\mu(\xi|f\xi)$,
where $\xi$ is an increasing partition subordinate to unstable manifolds.
Because the partition is increasing,
it is convenient to use sometimes.
However, practically it takes some work to construct such partitions,
such as in \cite{PS} and \cite{Yan}.  In our definition,
instead of $\xi$, we relax the increasing condition and use partitions
$\eta$ that can be obtained by refining any finite partitions
of small diameters to unstable leaves, and is much easier
to construct.  Moreover, the size of the elements of $\eta$
can be uniformly bounded from above and below, while the size of elements
of $\xi$ could be arbitrarily large or small on unstable leaves.
Further, since our definition for unstable metric entropy
has a similar form as the classical one,
some properties can be obtained by following the same strategies,
such as upper semi-continuity and variational principle
(see the proof of Proposition~\ref{Pusc} and Theorem~D).

The unstable topological entropy we introduce can be regarded as
a kind of conditional topological entropy.
Conditional topological entropy is first introduced in \cite{Mis1},
where the author uses open covers for condition.
Our definition is close to the topological conditional entropy
defined in \cite{HYZ}, though they have a factor map to provide
a natural partition for condition and we do not.
However, the quantity that gives the same information is the
unstable volume growth introduced by Hua-Saghin-Xia (\cite{HSX}),
reminiscent from the works by Yomdin and Newhouse (\cite{Y,N}).
With the notion they obtained a formula for upper bound of
the metric entropy of an invariant measure using unstable volume growth
and the sum of positive Lyapunov exponents in the center direction
(see \eqref{fineqHSX}).
With unstable entropies we defined, we can give different versions of
the formula in both measure theoretic and topological categories.
For the former one we use unstable metric entropy and the sum of positive
center Lyapunov exponents for upper bounds of metric entropy
(Corollary~A.1).  For the latter one we conclude in Corollary~C.2
that the topological entropy of a partially hyperbolic diffeomorphism
is bounded by the unstable topological entropy and the growth rates
of $\|Df|_{E^c}\|$ and its outer products (defined in \eqref{fDfouter1}
and \eqref{fDfouter2}).
Further we can define a \emph{transversal entropy}
(Definition~\ref{Deftransent})
and then obtain that the upper bound can be given by the sum of the unstable
topological entropy and transversal entropy.

When the paper was being written we found a paper by Jiagang Yang \cite{Yan}
that contains the upper semi-continuity of the unstable metric entropy
with respect to both the invariant measures $\mu$ and the dynamical
systems $f$, by constructing an increasing partition $\xi$.
It is more general than the result in Proposition~\ref{Pusc}.
However, we still give our proof since it is much simpler and straightforward
when our definition of the unstable metric entropy is used.

Though the unstable entropies we introduce here are for the unstable
foliations of partially hyperbolic diffeomorphisms, it is obvious
that they can be applied to more general settings.
If a diffeomorphism has a hierarchy of unstable foliations,
the entropies can be defined on each level as long as the map
is uniformly expanding restricted to the leaves of the foliation.
Also, if partial hyperbolicity holds only on a closed invariant subset
$\Lambda$, such as an Axiom~A system crossing any slow motion system,
then we can study unstable entropies for the system $f|_{\Lambda}$,
the diffeomorphism restricted to $\Lambda$.

The paper is organized as following.
In Section~\ref{Sdef} we give definitions of unstable metric and topological
entropy and state the main results.
We prove Theorem~A and provide some properties of unstable metric entropy
in Section~\ref{SUME}.  Section~\ref{SSMBThm} is for a proof of
Shannon-McMillan-Breiman theorem (Theorem~B) for unstable metric entropy.
Properties of unstable topological entropy and proof of Theorem~C are
provided in Section~\ref{SUTE}.  The last section, Section~\ref{SVP},
is for the proof of Theorem~D, the variational principle.

\section{Definitions and statements of main results}\label{Sdef}
\setcounter{equation}{0}

Let $M$ be an $n$-dimensional smooth, connected and compact Riemannian manifold without boundary and $f: M \rightarrow M$ a $C^{1}$-diffeomorphism. $f$ is said to be \emph{partially hyperbolic} (cf. for example \cite{RRU}) if there exists a nontrivial $Tf$-invariant splitting $TM= E^s \oplus E^c \oplus E^u$
of the tangent bundle into stable, center, and unstable distributions, such that all unit vectors $v^{\sigma} \in E_x^\sigma$ ($\sigma= c,s,u$) with $x\in M$ satisfy
\begin{equation*}
\|T_xfv^s\| < \|T_xfv^c\| < \|T_xfv^u\|,
\end{equation*}
and
\begin{equation*}
\|T_xf|_{E^s_x}\| <1 \ \ \ \text{\ and\ \ \ \ } \|T_xf^{-1}|_{E^u_x}\| <1,
\end{equation*}
for some suitable Riemannian metric on $M$. The stable distribution $E^s$ and unstable distribution $E^u$ are integrable to the stable and unstable foliations $W^s$ and $W^u$ respectively such that $TW^s=E^s$ and $TW^u=E^u$ (cf. \cite{HPS}).

In this paper we always assume that $f$ is a $C^1$-partially hyperbolic
diffeomorphism of $M$, and $\mu$ is an $f$-invariant probability measure.

For a partition $\a$ of $M$, let $\a(x)$ denote the element of $\a$
containing $x$.
If $\a$ and $\b$ are two partitions such that $\a(x)\subset \b(x)$
for all $x\in M$, we then write $\a \geq \b$ or $\b\leq \a$.
A partition $\xi$ is \emph{increasing} if $f^{-1}\xi \geq \xi$.
For a measurable partition $\b$, we denote
$\disp\b_m^n=\vee_{i=m}^n f^{-i}\b$.  In particular,
$\disp\b_0^{n-1}=\vee_{i=0}^{n-1} f^{-i}\b$.

Take $\e_0>0$ small.
Let $\P=\P_{\e_0}$ denote the set of finite measurable partitions of $M$
whose elements have diameters smaller than or equal to $\e_0$, that is,
$\diam \a:=\sup\{\diam A: A\in \a\}\le \e_0$.
For each $\b\in \P$ we can define a finer partition $\eta$ such that
$\eta(x)=\b(x)\cap W^u_\loc(x)$ for each $x\in M$, where $W^u_\loc(x)$
denotes the local unstable manifold at $x$ whose size is
greater than the diameter $\e_0$ of $\beta$.
Clearly $\eta$ is a measurable partition satisfying $\eta\ge \b$.
Let $\P^u=\P^u_{\e_0}$ denote the set of partitions $\eta$ obtained this way.

A partition $\xi$ of $M$ is said to be
\emph{subordinate to unstable manifolds} of $f$ with respect
to a measure $\mu$ if for $\mu$-almost every $x$,
$\xi(x)\subset W^u(x)$
and contains an open neighborhood of $x$ in $W^u(x)$.
It is clear that if $\a\in \P$ such that
$\mu(\partial \a)=0$ where $\partial \a:=\cup_{A\in \a} \partial A$,
then the corresponding $\eta$ given by $\eta(x)=\a(x)\cap W^u_\loc(x)$
is a partition subordinate to unstable manifolds of $f$.

Given a measure $\mu$ and measurable partitions $\a$ and $\eta$, let
$$H_\mu(\a|\eta):=-\int_M \log \mu_x^\eta(\alpha(x))d\mu(x)$$
denote the conditional entropy of $\a$ given $\eta$ with respect to $\mu$,
where $\{\mu_x^\eta: x\in M\}$ is a family of conditional measures of $\mu$ relative to $\eta$.
The precise meaning is given in Definition~\ref{Defcondent1} in
the next section (see also \cite{R}).

\begin{definition}\label{Defuentropy}
The \emph{conditional entropy of $f$ with respect to a measurable partition $\a$
given $\eta\in \P^u$} is defined as
$$h_\mu(f, \alpha|\eta)
=\limsup_{n\to \infty}\frac{1}{n}H_\mu(\alpha_0^{n-1}|\eta).
$$
The \emph{conditional entropy of $f$ given $\eta\in \P^u$}
is defined as
$$h_\mu(f|\eta)
=\sup_{\alpha \in \P}h_\mu(f, \alpha|\eta).
$$
and the \emph{unstable metric entropy of $f$} is defined as
\[
h_\mu^u(f)=\sup_{\eta\in \P^u}h_\mu(f|\eta).
\]
\end{definition}

\begin{remark}
In the definition of $h_\mu(f, \alpha|\eta)$ we take $\limsup$ instead of
$\lim$, since $\eta$ is not invariant under $f$ and hence the sequence $\{H_\mu(\alpha_0^{n-1}|\eta)\}$ is not necessarily subadditive. Therefore, existence of such a limit is not obvious.
\end{remark}

We show in Lemma~\ref{Lcond} that $h_\mu(f|\eta)$ is independent
of $\eta$, as long as it is in $\P^u$.  Hence, we actually have
$h_\mu^u(f)=h_\mu(f|\eta)$ for any $\eta\in \P^u$.

\medskip
Suppose that $\mu$ is ergodic. Recall a hierarchy of metric entropies
$h_\mu(f, \xi_i):=H_\mu(\xi_i|f\xi_i)$ introduced by Ledrappier and Young
in \cite{LY2},
where $i=1, \cdots, \tilde u$, and $\tilde u$
is the number of distinct positive Lyapunov exponents.
For each $i$, $\xi_i$ is an increasing partition subordinate to
the $i$th level of the unstable leaves $W^{(i)}$, and is a generator.
(See Subsection~{\ref{SSmetric2} for precise meaning.)
It is proved there that $h_{\mu}(f, \xi_{\tilde u})=h_\mu(f)$, the
metric entropy of $\mu$.

If there are $u$ distinct Lyapunov exponents on unstable subbundle,
then the $u$th unstable foliation are the unstable foliation of the
partially hyperbolic system $f$.
We show that the unstable metric entropy we define is identical
to $h_\mu(f, \xi_u)$ given by Ledrappier-Young.

Denote by $\Q^u$ the set of increasing partitions $\xi_u$
that are subordinate to $W^{u}$, and are generators, that is, partitions $\xi_u$ satisfying condition~(i)-(iii)
in Lemma~\ref{Lpartition} in Subsection~\ref{SSmetric2}.

\begin{TheoremA}\label{ThmA}
Suppose $\mu$ is an ergodic measure.
Then for any $\a\in \P$, $\eta\in \P^u$ and $\xi\in {\Q}^u$,
$$h_\mu(f, \a|\eta)=h_\mu(f, \xi).$$
Hence,
$$h_\mu^u(f)=h_\mu(f|\eta)=h_\mu(f, \xi).$$
\end{TheoremA}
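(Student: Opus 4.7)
The plan is to establish the matching inequalities $h_\mu(f,\a|\eta)\le H_\mu(\xi|f\xi)$ and $h_\mu(f,\a|\eta)\ge H_\mu(\xi|f\xi)$. Two properties of $\xi\in\Q^u$ drive everything: since $\xi$ is increasing, $\xi_0^{n-1}=f^{-(n-1)}\xi$ and telescoping with $f$-invariance gives $H_\mu(\xi_0^{n-1}|\xi)=(n-1)H_\mu(\xi|f\xi)$; since $\xi$ is a generator, on the one hand $H_\mu(\a|\xi_0^N)\to 0$ as $N\to\infty$, and on the other hand, uniform expansion on $W^u$ together with $\diam\a\le\e_0$ forces the $W^u_\loc$-atoms of $\a_{-M}^M$ to shrink to points, so $H_\mu(\xi|\a_{-M}^M)\to 0$ as $M\to\infty$. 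Along the way, Lemma~\ref{Lpartition} is used to choose $\xi$ compatibly with $\eta$ so that the cross terms $H_\mu(\xi|\eta)$ and $H_\mu(\eta|\xi)$ are finite.

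For the upper bound, given $\e>0$ pick $N$ with $H_\mu(\a|\xi_0^N)<\e$ and write
$$H_\mu(\a_0^{n-1}|\eta)\le H_\mu(\xi_0^{n-1+N}|\eta)+H_\mu(\a_0^{n-1}|\xi_0^{n-1+N}).$$
The second term decomposes by the chain rule and $f$-invariance as $\sum_{i=0}^{n-1}H_\mu(f^{-i}\a|f^{-i}\xi_0^N)=nH_\mu(\a|\xi_0^N)\le n\e$, using $f^{-i}\xi_0^N\subset\xi_0^{n-1+N}$. The first term is at most $H_\mu(\xi|\eta)+H_\mu(\xi_0^{n-1+N}|\xi)=H_\mu(\xi|\eta)+(n-1+N)H_\mu(\xi|f\xi)$ by the chain rule and the telescoping identity. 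Dividing by $n$, letting $n\to\infty$ and $\e\to 0$ gives $\limsup\frac1n H_\mu(\a_0^{n-1}|\eta)\le H_\mu(\xi|f\xi)$.

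For the lower bound I reverse roles. A symmetric telescoping estimate (applying $f^{n-1}$ and the inequality $H(A|B)\le H(A|C)+H(C|B)$ with $C=f^{n-1}\xi$) gives $H_\mu(\xi_0^{n-1}|\eta)\ge(n-1)H_\mu(\xi|f\xi)-H_\mu(\eta|\xi)$. Combining with
$$H_\mu(\xi_0^{n-1}|\eta)\le H_\mu(\a_0^{n-1+N}|\eta)+H_\mu(\xi_0^{n-1}|\a_0^{n-1+N}\vee\eta),$$
the task reduces to bounding $H_\mu(\xi_0^{n-1}|\a_0^{n-1+N}\vee\eta)$ by $n\e+o(n)$. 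Decompose $\xi_0^{n-1}=\bigvee_{i=0}^{n-1}f^{-i}\xi$: for bulk indices $M\le i\le n-1$ one has $f^{-i}\a_{-M}^M\subset\a_0^{n-1+N}$ (when $N\ge M$), so this portion of the sum is at most $nH_\mu(\xi|\a_{-M}^M)<n\e$ for $M$ chosen large; the $O(M)$ boundary terms contribute a fixed $O(M^2)$, which is $o(n)$. Dividing by $n$ and sending $n\to\infty$, $\e\to0$ gives $\liminf\frac1n H_\mu(\a_0^{n-1}|\eta)\ge H_\mu(\xi|f\xi)$.

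The main obstacle is the lower bound, specifically verifying $H_\mu(\xi|\a_{-M}^M)\to 0$: this is where uniform unstable expansion, ergodicity, and the generator property of $\xi$ interact, via a martingale-convergence argument showing that the $W^u_\loc$-atoms of $\a_{-M}^M$ generate the point partition along unstable leaves. The upper bound, by contrast, is largely formal once $\xi$ is known to be a generator. Having proved $h_\mu(f,\a|\eta)=H_\mu(\xi|f\xi)=h_\mu(f,\xi)$ for a single compatible choice of $\xi$, independence of the right-hand side from $\xi\in\Q^u$ (Ledrappier--Young) extends the identity to arbitrary $\xi\in\Q^u$; taking $\sup_\a$ and invoking Lemma~\ref{Lcond} yields $h_\mu^u(f)=h_\mu(f|\eta)=h_\mu(f,\xi)$.
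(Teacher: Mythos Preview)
Your overall plan (matching two inequalities via telescoping with an increasing $\xi$) is the paper's too, but both halves of your argument rely on cross-entropy terms that you have not shown are finite, and in one case are typically infinite.

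For the upper bound, you need $H_\mu(\xi|\eta)<\infty$ to make the bound $H_\mu(\xi_0^{n-1+N}|\eta)\le H_\mu(\xi|\eta)+(n-1+N)H_\mu(\xi|f\xi)$ useful. You say Lemma~\ref{Lpartition} lets you ``choose $\xi$ compatibly with $\eta$'' to ensure this, but Lemma~\ref{Lpartition} gives no such control: the atoms of $\xi$ can be arbitrarily small along $W^u$, so a single $\eta$-atom can meet infinitely many $\xi$-atoms. The paper explicitly flags this as an obstacle (see the paragraph preceding Proposition~\ref{PropA1}) and circumvents it by working with the \emph{finite} truncations $\hat\xi_{-k}=\bigvee_{j=0}^k f^j\hat\xi$, for which $H_\mu(\hat\xi_{-k}|\eta)\le k\log 2$ is immediate, and then passing to the limit via Lemmas~\ref{Lepsilonest} and~\ref{Lconvergence}.

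For the lower bound, the claim $H_\mu(\xi|\a_{-M}^M)\to 0$ is not just unjustified but generally false. The partition $\a_{-M}^M$ is finite, so the conditional measures $\mu_x^{\a_{-M}^M}$ live on full-dimensional atoms of $M$; since $\xi(x)\subset W^u(x)$ is a lower-dimensional leaf, one typically has $\mu_x^{\a_{-M}^M}(\xi(x))=0$ and hence $H_\mu(\xi|\a_{-M}^M)=+\infty$ for every $M$. Your observation that the $W^u_\loc$-traces of $\a_{-M}^M$ shrink is correct but irrelevant here: it would control $H_\mu(\xi|\a_{-M}^M\vee\eta)$, not $H_\mu(\xi|\a_{-M}^M)$, and your subadditive decomposition discards the $\eta$ when passing to $f^{-i}\a_{-M}^M$. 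Even the ``$O(M^2)$ boundary terms'' are sums of quantities of the form $H_\mu(f^{-i}\xi|\cdot)$ with $\xi$ infinite, so there is no reason they are finite. The paper's lower bound (Proposition~\ref{PropA2}) avoids this by replacing $\xi$ with a version whose atoms have uniformly small diameter, and then approximating $f^{-1}\xi$ from below by \emph{finite} $\a\in\P$ with $\a<f^{-1}\xi$, so that only finite-entropy quantities appear.
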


It is easy to see the following relation by the definition of
unstable metric entropy and a formula given by Ledrappier and Young.

Let $\{\lambda_i^c\}$ denote distinct Lyapunov exponents of $\mu$
in the center direction, and $m_i$ denote the multiplicity of $\lambda_i^c$.

\begin{CorollaryA1}\label{CA1}
$h_\mu^u(f)\le h_\mu(f)$.

Moreover, if $f$ is $C^{1+\alpha}$, then
$h_\mu(f)\le h_\mu^u(f)+\sum_{\lambda_i^c>0} \lambda_i^c m_i$.
In particular, if there is no positive Lyapunov exponent
in the center direction at $\mu$-a.e. $x\in M$,  then $h_\mu^u(f)= h_\mu(f)$.
\end{CorollaryA1}

In \cite{HSX} the authors proved that for any ergodic measure $\mu$,
\begin{equation}\label{fineqHSX}
h_\mu(f)\leq \chi^u(f)+\sum_{\lambda_i^c>0}\lambda_i^c m_i,
\end{equation}
where $\chi^u(f)$ denotes the volume growth of the unstable foliation
(see \eqref{fvolgrowth1} and \eqref{fvolgrowth2} below for precise meaning).
The part of the inequality for the upper bound of $h_\mu(f)$
in the corollary can be regarded as a version of \eqref{fineqHSX}
in measure theoretic category.

\medskip
The first equation of Theorem A gives that $h_\mu(f, \a|\eta)$
is independent of $\a$.
The proof of the theorem also gives the following:

\begin{CorollaryA2}\label{CA2}
$\disp h^u_\mu(f)=h_\mu(f, \a|\eta)
=\lim_{n\to\infty}\frac{1}{n}H_\mu(\a_0^{n-1}|\eta)$
for any $\a\in \P$ and $\eta\in \P^u$.
\end{CorollaryA2}

The next result is a version of Shannon-McMillan-Breiman theorem
for the unstable metric entropy. In the proof of Theorem~A we actually showed that the sequence of the integrals of functions $\disp\Big\{\frac{1}{n}I_\mu(\alpha_0^{n-1}|\eta)\Big\}$ converges to
$h_\mu(f, \a|\eta)$.
This theorem states that the functions converge almost everywhere.

\begin{TheoremB}\label{ThmB}
Suppose $\mu$ is an ergodic measure of $f$.  Let $\eta\in \P^u$ be given.
Then for any partition $\a$ with $H_\mu(\a|\eta)<\infty$, we have
\begin{equation*}
\lim_{n\to \infty}\frac{1}{n}I_\mu(\alpha_0^{n-1}|\eta)(x)
=h_\mu(f, \a|\eta) \quad \quad \mu\text{-a.e.} x\in M.
\end{equation*}
\end{TheoremB}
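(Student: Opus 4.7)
The plan is to adapt the classical Shannon--McMillan--Breiman (SMB) argument (Parry--Petersen) to the conditional, non-invariant setting. I would apply the chain rule for conditional information to decompose $I_\mu(\alpha_0^{n-1}|\eta)$ into a sum along the $f$-orbit, identify the summand's almost-sure limit via a martingale convergence theorem, and close with a Maker-type extension of Birkhoff's pointwise ergodic theorem. The novelty compared with the classical case is that $\eta\in\P^u$ is not $f$-invariant, so the conditioning partitions $f^k\eta$ change with the summation index $k$; this is the principal technical obstacle.

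Explicitly, iterating $I_\mu(P\vee Q|\B)=I_\mu(P|\B)+I_\mu(Q|P\vee\B)$ together with the $f$-invariance identity $I_\mu(f^{-k}\zeta|f^{-k}\mathcal{A})(x)=I_\mu(\zeta|\mathcal{A})(f^k x)$ gives
$$
I_\mu(\alpha_0^{n-1}|\eta)(x)=\sum_{k=0}^{n-1}\phi_{k,n}(f^k x),\qquad \phi_{k,n}(y):=I_\mu\Bigl(\a\,\Big|\,\bigvee_{j=1}^{n-1-k}f^{-j}\a\vee f^k\eta\Bigr)(y).
$$
For each fixed $k$, the increasing martingale theorem produces $\phi_{k,n}\to\phi_k:=I_\mu(\a\,|\,\bigvee_{j\ge 1}f^{-j}\a\vee f^k\eta)$ in $L^1$ and $\mu$-a.e. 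A Doob--Chung-type maximal estimate (available because $H_\mu(\a|\eta)<\infty$ and $\a$ is finite) provides an integrable domination of the doubly-indexed family $\{\phi_{k,n}\}$. Then Maker's theorem together with Birkhoff's ergodic theorem yields
$$
\frac{1}{n}\sum_{k=0}^{n-1}\phi_{k,n}\circ f^k\longrightarrow \int\phi\,d\mu\qquad\mu\text{-a.e.},
$$
for an appropriate limit $\phi$ of the $\phi_k$; and $\int\phi\,d\mu=h_\mu(f,\a|\eta)=h_\mu^u(f)$ by a conditional Kolmogorov--Sinai formula combined with Theorem~A and Corollary~A.2.

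The hard part will be extracting a common a.e.\ limit $\phi$ from the $\phi_k$ as $k\to\infty$, because the partitions $f^k\eta$ are not monotone in $k$ for a generic $\eta\in\P^u$ (the image $f\eta$ need neither refine nor coarsen $\eta$). One clean remedy is to first prove the theorem for a Ledrappier--Young increasing partition $\xi\in\Q^u$, for which $\{f^k\xi\}$ is a genuine decreasing sequence and $\bigcap_k\sigma(f^k\xi)$ coincides with the $f$-invariant $\sigma$-algebra generated by whole unstable leaves---making a reverse-martingale argument available---and then transfer the a.e.\ statement back to arbitrary $\eta\in\P^u$ using Theorem~A and the independence of $h_\mu(f|\eta)$ from $\eta\in\P^u$ (Lemma~\ref{Lcond}). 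A direct alternative is to combine the Doob--Chung maximal estimate with uniform tail bounds of the form $\sup_{n\ge k+N}|\phi_{k,n}-\phi_k|\to 0$ in a Cesàro sense, and conclude without passing through $\xi$.
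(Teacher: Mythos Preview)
Your chain-rule decomposition is correct (it is Lemma~\ref{Linduction}(ii) of the paper), but the Maker--Breiman step as you describe it cannot give the right answer. For fixed $k$, your limit
\[
\phi_k \;=\; I_\mu\!\left(\a\,\Big|\,\bigvee_{j\ge 1}f^{-j}\a\vee f^k\eta\right)
\]
is identically zero $\mu$-a.e. Indeed, $f^k\eta$ is subordinate to unstable leaves, and on any piece of an unstable leaf the forward tail $\bigvee_{j\ge 1}f^{-j}\a$ already separates points (uniform expansion forces $d^u(f^j y,f^j x)\to\infty$, so eventually $f^jy$ and $f^jx$ lie in different $\a$-atoms). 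Hence $\bigvee_{j\ge 1}f^{-j}\a\vee f^k\eta$ is the partition into points, and $\phi_k\equiv 0$. Any Maker-type theorem fed these $\phi_k$ would output $0$, not $h_\mu^u(f)$. Replacing $\eta$ by an increasing $\xi\in\Q^u$ does not help: $f^k\xi$ is still subordinate to unstable leaves, so the same collapse occurs. This is exactly the point where the classical SMB scheme breaks down in the unstable-conditional setting --- the future tail of $\a$, once joined with any leafwise partition, becomes the point partition, so the ``per-step information'' limit is zero rather than $h_\mu^u(f)$. The entropy is hiding in the diagonal interaction between the two indices in $\phi_{k,n}$, and neither of your remedies addresses this.

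The paper does not attempt a direct Maker argument on $I_\mu(\a_0^{n-1}|\eta)$. It splits into a lower and an upper bound. The lower bound $\liminf\frac{1}{n}I_\mu(\a_0^{n-1}|\eta)\ge h_\mu(f,\xi)$ comes from the Ledrappier--Young pointwise local entropy along unstable Bowen balls (Lemma~\ref{LSMB1.1}), transferred from $\xi$ to $\eta$ via an a.e.\ comparison (Lemma~\ref{LSMB1.3}). For the upper bound the paper writes
\[
I_\mu(\a_0^{n-1}|\eta)\;\le\; I_\mu(\xi_0^{n-1}|\eta)\;+\;I_\mu(\a_0^{n-1}\,|\,\xi_0^{n-1}\vee\eta).
\]
The first term reduces to a genuine Birkhoff sum because $\xi$ is increasing, so $\xi_0^{n-1}=f^{-(n-1)}\xi$ and one gets $\frac{1}{n}\sum I_\mu(\xi|f\xi)\circ f^i\to H_\mu(\xi|f\xi)=h_\mu(f,\xi)$ (Lemma~\ref{LSMB1}). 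The second, correction, term is handled by your Maker-type argument (Proposition~\ref{Lergodic}) --- and here zero \emph{is} the correct limit, since conditioning on the fine partition $\xi_0^{n-1}$ kills the $\a$-information (Lemma~\ref{Lmain}). So the generalized ergodic theorem is used only to show a remainder vanishes, not to produce the entropy itself.
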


We can use $\xi\in \Q^u$ for the given partition as well.

\begin{CorollaryB1}\label{PSMB}
Let $\mu$ be $f$-ergodic and $\xi\in \Q^u$.
Then for any partition $\a$ with $H_\mu(\a|\xi)<\infty$, we have
\begin{equation*}
\lim_{n\to \infty}\frac{1}{n}I_\mu(\alpha_0^{n-1}|\xi)(x)
=h_\mu(f, \a|\xi) \quad \quad \mu\text{-a.e.} x\in M,
\end{equation*}
where $h_\mu(f, \a|\xi)$ is defined as in
Definition~\ref{Defuentropy} with $\eta$ replaced by $\xi$.
\end{CorollaryB1}

\begin{remark}
We mention here that in Lemma~\ref{LSMB1} we also obtain
\[
\disp \lim_{n\to \infty}\frac{1}{n}I_\mu(\xi_0^{n-1}|\xi)(x)
=h_\mu(f, \xi)
\]
when $\mu$ is ergodic.
\end{remark}

Now we start to define the unstable topological entropy.

We denote by $d^u$ the metric induced by the Riemannian structure
on the unstable manifold and let
$d^u_{n}(x,y)=\max _{0 \leq j \leq n-1}d^u(f^j(x),f^j(y))$.
Let $W^u(x,\delta)$ be the open ball inside $W^u(x)$ centered at $x$
of radius $\delta$ with respect to the metric $d^u$.
Let $N^u(f,\epsilon,n,x,\delta)$ be the maximal
number of points in $\overline{W^u(x,\delta)}$ with pairwise
$d^u_{n}$-distances at least $\epsilon$.  We call such set an
\emph{$(n,\epsilon)$ u-separated set} of $\overline{W^u(x,\delta)}$.

\begin{definition}\label{Defutopent1}
The \emph{unstable topological entropy} of $f$ on $M$ is defined by
\begin{equation*}
h^u_{\text{top}}(f)
=\lim_{\delta \to 0}\sup_{x\in M}h^u_{\text{top}}(f, \overline{W^u(x,\delta)}),
\end{equation*}
where
\begin{equation*}
 h^u_{\text{top}}(f, \overline{W^u(x,\delta)})
=\lim_{\epsilon \to 0}\limsup_{n\to \infty}\frac{1}{n}\log N^u(f,\epsilon,n,x,\delta).
\end{equation*}
\end{definition}

We can also define unstable topological entropy by using
$(n,\epsilon)$ u-spanning sets or open covers to get equivalent
definitions.

Unstable topological entropy defined here can be regarded
as the asymptotic rate of orbit divergence along unstable manifolds.
Since $f$ is expanding restricted to unstable manifolds,
the rate of orbit divergence can also be reflected by
the asymptotic rate of the volume growth of unstable manifolds
under iterations of $f$.  Volume growth was
first used by Yomdin and Newhouse for the entropy of
diffeomorphisms (cf. \cite{Y}, \cite{N}).
The unstable volume growth for partially hyperbolic systems
is used in \cite{HSX}, which is defined as following:
\begin{equation}\label{fvolgrowth1}
\chi_u(f)= \sup_{x \in M} \chi_u(x, \delta)
\end{equation}
where
\begin{equation}\label{fvolgrowth2}
\chi_u(x, \delta) = \limsup_{n \to \infty} \frac{1}{n} \log
(\text{Vol} (f^n(W^u(x,\delta))).
\end{equation}

Note that the unstable volume growth is independent of
$\delta$ and the Riemannian metric (cf. Lemma 1.1 in \cite{SX}).
We show that the unstable topological entropy actually coincides
with the unstable volume growth.

\begin{TheoremC}\label{ThmC}
$h_{\text{top}}^u(f) ={\chi_u(f)}$.
\end{TheoremC}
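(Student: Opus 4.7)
The plan is to establish the two inequalities $h^u_{\text{top}}(f)\le \chi_u(f)$ and $h^u_{\text{top}}(f)\ge \chi_u(f)$ separately, each by comparing cardinalities of $(n,\epsilon)$ u-separated or u-spanning sets to volumes of iterated unstable disks. The key preliminary observation is that partial hyperbolicity gives a uniform $\lambda>1$ with $\|Df(v)\|\ge \lambda\|v\|$ for all $v\in E^u$; since $f$ maps $W^u(x)$ bijectively onto $W^u(fx)$ and $d^u$ is defined via the infimum of path lengths within the leaf, this upgrades to a global intrinsic-metric expansion $d^u(fy,fz)\ge \lambda\, d^u(y,z)$. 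Consequently $j\mapsto d^u(f^j y,f^j z)$ is monotonically non-decreasing in $j$, so $d^u_n(y,z)=d^u(f^{n-1}y,f^{n-1}z)$, and $f^{n-1}$ sets up a bijection between $(n,\epsilon)$ u-separated (resp.\ spanning) sets in $\overline{W^u(x,\delta)}$ and $\epsilon$-separated (resp.\ spanning) sets, in the $d^u$-metric, inside $f^{n-1}(\overline{W^u(x,\delta)})$.

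For the upper bound we take a maximal $(n,\epsilon)$ u-separated set $E\subset\overline{W^u(x,\delta)}$. The $d^u$-balls $B^u(f^{n-1}y,\epsilon/2)$, $y\in E$, are then pairwise disjoint, and each has $d^u$-volume at least $c\epsilon^u$ (with $u=\dim E^u$) by bounded geometry of the unstable foliation. Since $f^{-(n-1)}$ contracts along unstable leaves by at most $\lambda^{-(n-1)}$, the union of these balls lies inside $f^{n-1}(\overline{W^u(x,\delta+\epsilon/(2\lambda^{n-1}))})\subset f^{n-1}(\overline{W^u(x,2\delta)})$ for $n$ large, yielding
\[
N^u(f,\epsilon,n,x,\delta)\cdot c\epsilon^u \;\le\; \Vol\bigl(f^{n-1}(\overline{W^u(x,2\delta)})\bigr).
\]
Applying $\tfrac{1}{n}\log$, $\limsup_n$, and then $\epsilon\to 0$ gives $h^u_{\text{top}}(f,\overline{W^u(x,\delta)})\le \chi_u(x,2\delta)=\chi_u(x,\delta)$, using the $\delta$-independence recorded in Lemma 1.1 of \cite{SX}; taking $\sup_x$ and $\delta\to 0$ produces $h^u_{\text{top}}(f)\le \chi_u(f)$.

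For the reverse inequality we pass to u-spanning sets, noting that any maximal u-separated set is u-spanning, so $S^u(f,\epsilon,n,x,\delta)\le N^u(f,\epsilon,n,x,\delta)$. Given a u-spanning set $F$, the set $f^{n-1}(F)$ $\epsilon$-spans $f^{n-1}(\overline{W^u(x,\delta)})$ in $d^u$, and the covering $d^u$-balls of radius $\epsilon$ have volume at most $C\epsilon^u$, so $\Vol(f^{n-1}(\overline{W^u(x,\delta)}))\le \#F\cdot C\epsilon^u$. Applying $\tfrac{1}{n}\log$, $\limsup_n$, $\epsilon\to 0$, $\sup_x$, and $\delta\to 0$ then delivers $h^u_{\text{top}}(f)\ge \chi_u(f)$. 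The chief technical points requiring care will be the uniform bounded-geometry estimates $c\epsilon^u\le \Vol(B^u(\cdot,\epsilon))\le C\epsilon^u$ on the unstable foliation, and the verification that the $n$-dependent inflation of $\delta$ is exponentially small and hence drops out of the rate; both are standard consequences of compactness of $M$ together with the $\delta$-independence of $\chi_u(x,\delta)$ provided by Lemma 1.1 of \cite{SX}.
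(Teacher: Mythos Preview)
Your argument is correct and follows essentially the same route as the paper's own proof: both directions are obtained by comparing cardinalities of $(n,\epsilon)$ u-separated (resp.\ u-spanning) sets with volumes of iterated unstable disks via the bounded-geometry estimates $c\epsilon^u\le \Vol(B^u(\cdot,\epsilon))\le C\epsilon^u$, using the $\delta$-independence of $\chi_u$ to absorb the inflation of the radius. The one cosmetic difference is that you make the monotonicity $d^u_n(y,z)=d^u(f^{n-1}y,f^{n-1}z)$ (coming from uniform expansion along $W^u$) explicit, whereas the paper uses it tacitly when asserting that $f^n(F)$ is $\epsilon_0$-separated and that the $\epsilon_0/2$-balls around $f^n(F)$ lie in $f^n(W^u(x,\delta+\epsilon_0))$.
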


By the definition and the equality, we have the following facts.

\begin{CorollaryC1}\label{CC1}
$h^u_\top(f)\leq h_\top(f)$.

The equation holds if there is no positive Lyapunov exponent in the
center direction at $\nu$-a.e. with respect to any ergodic measure $\nu$.
\end{CorollaryC1}

We can also give a version of the inequality formula \eqref{fineqHSX}
in terms of unstable topological entropy and growth rates
of $\big\|{\bigwedge}^i Df^n|_{E^c}\big\|$ in the center direction.
For $1\le i\le \dim E^c$, let
\begin{equation}\label{fDfouter1}
\sigma^{(i)}
=\lim_{n\to\infty}\frac{1}{n}\log\Big\|{\bigwedge}^i Df^n|_{E^c}\Big\|,
\end{equation}
where ${\bigwedge}^i Df^n|_{E^c}$ denotes the $i$th outer product
of the differential $Df^n|_{E^c}$.  The limit exists because
of subadditivity of $\log\|{\bigwedge}^i Df^n|_{E^c}\|$.
Then we denote
\begin{equation}\label{fDfouter2}
\sigma=\max\{\sigma^{(i)}: \ i=1, \cdots, \dim E^c\}.
\end{equation}
Note that $\sigma^{(i)}$ is greater than the sum of the largest $i$ Lyapunov
exponents in the center direction at any point $x$ whenever they exist.


\begin{CorollaryC2}\label{CC2}
$h_\top(f)\leq h_\top^u(f)+\sigma$.

The equation holds if $\sigma^{(1)}\le 0$.
\end{CorollaryC2}

Similar to the quantity $h_{i+1}-h_{i}$, the difference between
consecutive hierarchy entropies, used by Ledrappier and Young
in \cite{LY2},
we can define transversal topological entropy as following.

Let $N(f,\epsilon,n,x,\delta)$ be the maximal number of points
in $\overline{B(x,\delta)}$ with pairwise $d_{n}$-distances
at least $\epsilon$, where $B(x,\delta)$ denotes the open ball about $x$
of radius $\d$, and $d_{n}(x,y)=\max _{0\leq j\leq n-1}d(f^j(x),f^j(y))$.

\begin{definition}\label{Deftransent}
The \emph{transversal topological entropy} of $f$ on $M$ is defined by
\begin{equation*}
h^t_{\top}(f)
=\lim_{\delta \to 0}\sup_{x\in M}h^t_{\text{top}}(f, \overline{B(x,\delta)}),
\end{equation*}
where
\begin{equation*}
h^t_{\text{top}}(f, \overline{B(x,\delta)})
=\lim_{\epsilon \to 0}\limsup_{n\to \infty}\frac{1}{n}
\big[\log N(f,\epsilon,n,x,\delta)-\log N^u(f,\epsilon,n,x,\delta)\big].
\end{equation*}
\end{definition}
With the notion we can give another version of formula in \cite{HSX}
in topological category.

\begin{CorollaryC3}\label{CC3}
$h_\top(f)\le h_\top^u(f)+h^t_\top(f)$.
\end{CorollaryC3}

We mention here that in \cite{WZ} the authors proved that if
$f$ is a partially hyperbolic diffeomorphism with a uniformly compact
center foliation, then $h_\top(f) \le p^c(f) + h_\top(f, {\mathcal W}^c)$,
where $p^c(f)$ is the growth rate of periodic center leaves, i.e.,
the leaves $W^c(x)$ with $f^nW^c(x)=W^c(f^nx)$, and
$h_\top(f, {\mathcal W}^c)$ is given by the growth rate of
$(n,\epsilon)$ separated sets on center leaves.

For partially hyperbolic diffeomorphisms, we can also build
a variational principle for unstable metric entropy
and unstable topological entropy.

Let $\mathcal{M}_f(M)$ and $\mathcal{M}^e_f(M)$ denote
the set of all $f$-invariant and ergodic probability measures on $M$
respectively.

\begin{TheoremD}\label{ThmD}
Let $f: M \to M$ be a $C^1$-partially hyperbolic diffeomorphism. Then
$$h^u_{\text{top}}(f)=\sup\{h_{\mu}^u(f): \mu \in \mathcal{M}_f(M)\}.
$$
Moreover,
$$
h^u_{\text{top}}(f)=\sup\{h_{\nu}^u(f): \nu \in \mathcal{M}^e_f(M)\}.
$$
\end{TheoremD}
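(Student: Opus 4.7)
The plan is to mirror the classical Misiurewicz proof of the variational principle, but with all entropies conditioned on partitions $\eta\in\mathcal{P}^u$, and to exploit the tools already built in the paper: affineness and upper semi-continuity of $\mu\mapsto h^u_\mu(f)$ (Propositions~\ref{Paffine} and \ref{Pusc}), independence of $h_\mu(f|\eta)$ on $\eta\in\mathcal{P}^u$ (Lemma~\ref{Lcond}), and the $(n,\epsilon)$ u-separated characterization of $h^u_{\text{top}}(f)$.

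\textbf{Easy direction.} To show $h^u_\mu(f)\le h^u_{\text{top}}(f)$ for any $\mu\in\mathcal{M}_f(M)$, I would fix $\alpha\in\mathcal{P}$ and the associated $\eta\in\mathcal{P}^u$. For $\mu$-a.e.\ $x$, $\eta(x)\subset W^u_\loc(x)$, and any two points of $\eta(x)$ lying in the same atom of $\alpha_0^{n-1}$ are $d^u_n$-close at scale $\ep_0$. Hence the number of atoms of $\alpha_0^{n-1}$ meeting $\eta(x)$ is dominated by an $(n,\ep_0/2)$ u-spanning number on $\overline{\eta(x)}$; integrating and comparing spanning with separated numbers yields $\tfrac{1}{n}H_\mu(\alpha_0^{n-1}|\eta)\le \tfrac{1}{n}\log N^u(f,\ep_0/2,n,x,\ep_0) + O(1/n)$. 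Letting $n\to\infty$, then $\ep_0\to 0$, and supping over $x$ gives the inequality.

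\textbf{Lower bound via Misiurewicz.} Fix $x\in M$, $\delta>0$ essentially realizing $h^u_{\text{top}}(f,\overline{W^u(x,\delta)})$, and $\epsilon>0$. Let $E_n\subset \overline{W^u(x,\delta)}$ be a maximal $(n,\epsilon)$ u-separated set, and set
\[
\sigma_n=\frac{1}{|E_n|}\sum_{y\in E_n}\delta_y,\qquad
\mu_n=\frac{1}{n}\sum_{i=0}^{n-1}f^i_*\sigma_n,
\]
and let $\mu$ be a weak-$*$ accumulation point of $\{\mu_n\}$, which is automatically $f$-invariant. Pick a finite partition $\alpha\in\mathcal{P}$ with $\diam\alpha<\epsilon/2$ and $\mu(\partial\alpha)=0$, and form the corresponding $\eta\in\mathcal{P}^u$. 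Since $\supp\sigma_n$ lies in a single local unstable leaf and since atoms of $\alpha_0^{n-1}$ have $d^u_n$-diameter $<\epsilon$, each such atom contains at most one point of $E_n$, giving $H_{\sigma_n}(\alpha_0^{n-1}|\eta)=\log|E_n|$. The standard block decomposition (splitting $\{0,\dots,n-1\}$ into blocks of length $q$ and using concavity of the conditional entropy together with subadditivity) then produces
\[
\frac{1}{n}\log|E_n|\le \frac{1}{q}H_{\mu_n}(\alpha_0^{q-1}|\eta) + O(q/n).
\]
Upper semi-continuity of $\mu\mapsto H_\mu(\alpha_0^{q-1}|\eta)$ (using $\mu(\partial\alpha)=0$), passing $n=n_k\to\infty$, dividing by $q$, and letting $q\to\infty$ gives $\limsup_n\tfrac{1}{n}\log|E_n|\le h_\mu(f|\eta)=h^u_\mu(f)$. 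Finally $\epsilon\to 0$, $\delta\to 0$, and $\sup$ over $x$ yields the first equality of Theorem~D. The ergodic version follows from the ergodic decomposition $\mu=\int \mu_y\,d\mu(y)$ and affineness of $h^u_\cdot(f)$: for any $c<h^u_\mu(f)$, ergodic components $\mu_y$ with $h^u_{\mu_y}(f)>c$ form a set of positive measure, producing a suitable ergodic $\nu$.

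\textbf{Main obstacle.} The delicate point is in the block decomposition: since $\eta$ is not $f$-invariant, the naive subadditivity $H_\mu(\alpha_0^{p+q-1}|\eta)\le H_\mu(\alpha_0^{p-1}|\eta)+H_\mu(\alpha_0^{q-1}|\eta)$ does not hold; what one actually gets involves $f^{-p}\eta$ in place of $\eta$ in the second term. I expect to control the discrepancy by showing that replacing $\eta$ by $f^{-i}\eta$ changes the relevant conditional entropies by a term of bounded growth, leveraging Lemma~\ref{Lcond} and the uniform geometry of partitions in $\mathcal{P}^u$ (elements are pieces of $W^u_\loc$ of controlled size). This estimate, together with the specific way $\eta$ restricts $\sigma_n$ to its single-leaf support, is what makes the Misiurewicz scheme close in the partially hyperbolic, conditional setting.
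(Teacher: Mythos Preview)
Your overall strategy matches the paper's: the hard direction is indeed a Misiurewicz-type construction with weak-$*$ accumulation of averaged empirical measures, and the ergodic case follows from affineness and the ergodic decomposition exactly as you propose (this is \eqref{e:ergodiccomponent} in the paper). However, there are two genuine gaps.

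\textbf{Easy direction.} Your counting argument has the inequality the wrong way around. From ``two points of $\eta(x)$ in the same atom of $\alpha_0^{n-1}$ are $d^u_n$-close at scale $\ep_0$'' you can conclude only that each atom has small $d^u_n$-diameter, so one representative per atom yields a \emph{spanning} set: $S^u(f,C\ep_0,n,\overline{\eta(x)})\le N_x$. That is the reverse of what you need to bound $H_{\mu_x^\eta}(\alpha_0^{n-1})\le\log N_x$ from above by topological data. The paper (Proposition~\ref{variationalprinciple1}) does not count atoms at all; it uses a Brin--Katok argument: by Corollary~\ref{Cpointwise}, for ergodic $\mu$ one has $\mu_x^\eta(B^u_n(y,\e))\le e^{-n(h^u_\mu(f)-\rho)}$ for a.e.\ $y$ and large $n$, so covering a set of conditional measure $>1-\rho$ by Bowen balls forces $S^u(f,\e/2,n,\eta(x))\ge(1-\rho)e^{n(h^u_\mu(f)-\rho)}$. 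This is where the work of Section~\ref{SSMBThm} is actually used.

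\textbf{Hard direction.} You correctly isolate the obstacle, but the proposed fix (comparing $\eta$ with $f^{-i}\eta$ via Lemma~\ref{Lcond}) is vaguer than what is needed, and Lemma~\ref{Lcond} concerns \emph{limits} for invariant measures, not uniform bounds for the non-invariant $\nu_n,\mu_n$ at finite $n$. The paper's device is more concrete: in the block decomposition one conditions not on $\eta$ but on $f\alpha^u\in\P^u$, using
\[
f^{rq}\bigl(\alpha_0^{rq-1}\vee f^j\eta\bigr)=f\alpha\vee\cdots\vee f^{rq}\alpha\vee f^{rq+j}\eta\ \ge\ f\alpha^u,
\]
so each block contributes at most $H_{f^{rq+j}\nu_n}(\alpha_0^{q-1}\mid f\alpha^u)$. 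Concavity (Proposition~\ref{Paffine}) then gives
\[
\frac{q}{n}\log|E_n|\ \le\ H_{\mu_n}(\alpha_0^{q-1}\mid f\alpha^u)+O\Big(\frac{q^2}{n}\Big),
\]
and upper semi-continuity (Proposition~\ref{Pusc}(a)) passes this to $H_{\mu}(\alpha_0^{q-1}\mid f\alpha^u)$; finally Corollary~A.2 and Theorem~A identify the limit in $q$ with $h^u_\mu(f)$. The point is to pick a \emph{single fixed} element of $\P^u$ that is coarser than all the shifted conditionings arising in the decomposition, rather than to track the drift $\eta\to f^{-i}\eta$.
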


\medskip
\section{Unstable metric entropy}\label{SUME}
\setcounter{equation}{0}

\subsection{Conditional entropy}
In this subsection we provide more detailed information about
conditional entropy and some properties as a supplement
of Definition~\ref{Defuentropy}.

Recall that for a measurable partition $\eta$ of a measure space $X$
and a probability measure $\nu$ on $X$, the \emph{canonical system
of conditional measures for $\nu$ and $\eta$} is a family of probability
measures $\{\nu_x^\eta: x\in X\}$ with $\nu_x^\eta\bigl(\eta(x)\bigr)=1$,
such that for every measurable set $B\subset X$, $x\mapsto \nu_x^{\eta}(B)$
is measurable and
\[
\nu (B)=\int_X\nu_x^{\eta}(B)d\nu(x).
\]
(See e.g. \cite{R} for reference.)

The following notions are standard.

\begin{definition}\label{Defcondent1}
The \emph{information function of $\alpha\in \P$} are defined as
$$I_\mu(\alpha)(x):=-\log \mu(\alpha(x)),$$
and the \emph{entropy of partition $\alpha$} as
$$H_\mu(\alpha):=\int_M I_\mu(\alpha)(x)d\mu(x)=-\int_M \log \mu(\alpha(x))d\mu(x).$$
The \emph{conditional information function of $\alpha\in \P$
with respect to a measurable partition $\eta$ of $M$} is defined as
$$I_\mu(\alpha|\eta)(x):=-\log \mu_x^\eta(\alpha(x)).$$
Then the \emph{conditional entropy of $\alpha$ with respect to $\eta$}
is defined as
$$H_\mu(\alpha|\eta):=\int_M I_\mu(\alpha|\eta)(x)d\mu(x)=-\int_M \log \mu_x^\eta(\alpha(x))d\mu(x).$$
\end{definition}

The properties in the following lemma is well known (see e.g. \cite{R}).

\begin{lemma}\label{Lcond1}
Let $\a$, $\b$ and $\c$ be measurable partitions with
$ H_\mu(\a|\c), H_\mu(\b|\c)<\infty$.
\begin{enumerate}
\item[(i)]
If $\a\le \b$, then $I_\mu(\a|\c)(x)\le I_\mu(\b|\c)(x)$ and
$H_\mu(\a|\c)\le H_\mu(\b|\c)$.

\item[(ii)]
$I_\mu(\a\vee\b|\c)(x)=I_\mu(\a|\c)(x)+I_\mu(\b|\a\vee\c)(x)$ and
$H_\mu(\a\vee\b|\c)=H_\mu(\a|\c)+H_\mu(\b|\a\vee\c)$.

\item[(iii)]
$H_\mu(\a\vee\b|\c)\le H_\mu(\a|\c)+H_\mu(\b|\a)$.

\item[(iv)]
$H_\mu(\b|\c)\le H_\mu(\a|\c)+H_\mu(\b|\a)$.

\item[(v)]
If $\b\le \c$, then $H_\mu(\a|\b)\ge H_\mu(\a|\c)$.
\end{enumerate}
\end{lemma}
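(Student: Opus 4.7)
The plan is to establish the five items in the logical order (i), (ii), (v), (iii), (iv), since (iii) and (iv) are formal consequences of the earlier statements. The three tools I will use throughout are: (a) the pointwise definition of the canonical conditional measures $\mu_x^\eta$ recalled above the lemma; (b) a factorization identity for conditional measures under joins; and (c) Jensen's inequality applied to the concave function $\phi(t) = -t\log t$ on $[0,1]$.

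For (i), the hypothesis $\a \le \b$ unravels to $\b(x) \subset \a(x)$, so monotonicity of $\mu_x^\c$ gives $\mu_x^\c(\b(x)) \le \mu_x^\c(\a(x))$, and taking $-\log$ yields the pointwise information inequality; integrating then gives the entropy inequality. For (ii), I will verify the factorization
\[
\mu_x^\c\bigl((\a\vee\b)(x)\bigr) \;=\; \mu_x^\c(\a(x)) \cdot \mu_x^{\a\vee\c}(\b(x))
\]
$\mu$-a.e. by checking that the family $\nu_x(B) := \mu_x^\c(B \cap \a(x))/\mu_x^\c(\a(x))$, defined on atoms of $\a\vee\c$, satisfies the defining property of the canonical conditional system for $\mu$ relative to $\a\vee\c$, and then invoking uniqueness. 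Taking $-\log$ of this factorization yields the additive identity for $I_\mu$ pointwise, and integration gives the entropy identity.

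For (v), with $\b \le \c$ the tower property of canonical conditional measures gives $\mu_x^\b(A) = \int \mu_y^\c(A)\, d\mu_x^\b(y)$ $\mu$-a.e.\ for each atom $A \in \a$. Using that $\mu_x^\b(A)$ is $\b$-measurable in $x$, I rewrite
\[
H_\mu(\a|\b) \;=\; \int \sum_{A \in \a} \phi\bigl(\mu_x^\b(A)\bigr)\, d\mu(x),
\]
and similarly for $H_\mu(\a|\c)$. Jensen's inequality applied atom-by-atom gives $\phi(\mu_x^\b(A)) \ge \int \phi(\mu_y^\c(A))\, d\mu_x^\b(y)$; integrating against $\mu$ and using $\int \int f(y)\, d\mu_x^\b(y)\, d\mu(x) = \int f(y)\, d\mu(y)$ collapses the right-hand side to $H_\mu(\a|\c)$, establishing (v). Finally, (iii) follows from (ii) combined with (v) applied to the refinement $\a \le \a\vee\c$ (yielding $H_\mu(\b|\a\vee\c) \le H_\mu(\b|\a)$), and (iv) follows by combining (i) (since $\b \le \a\vee\b$) with (iii).

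The main obstacle is (v): although the calculation is a routine application of Jensen, the rigorous step requires the tower property for canonical conditional measures to hold simultaneously for all atoms $A \in \a$ on a single full-measure set, together with a Fubini-type justification for the iterated integrals. Both facts are standard consequences of Rohlin's theory of measurable partitions, which I will invoke. The remaining items are then formal bookkeeping.
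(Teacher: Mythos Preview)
Your proposal is correct and follows the standard textbook route (e.g., Petersen, Glasner, or Rohlin). The paper itself does not prove this lemma at all: it simply states that the properties are well known and cites Rohlin \cite{R}. So there is nothing to compare against beyond noting that your argument is exactly the classical one those references contain. One small caveat: in your treatment of (v) you write $\sum_{A\in\alpha}\phi(\mu_x^\beta(A))$, which tacitly assumes $\alpha$ is countable; the lemma as stated allows $\alpha$ to be an arbitrary measurable partition, so strictly speaking you would need either to approximate by finite sub-partitions or to phrase the Jensen step directly in terms of the information function $-\log\mu_x^\beta(\alpha(x))$ and the tower property. This is routine and handled in the cited references, but worth flagging if you intend the argument to stand on its own.
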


\begin{remark}
We mention here that  $\b\le \c$ does not imply
$I_\mu(\a|\b)(x)\ge I_\mu(\a|\c)(x)$ for $\mu$-a.e. $x$,
though we have (v) in the above lemma.
\end{remark}

Recall that for a probability space $(X, {\mathcal B}, \nu)$ and
a sequence of increasing sub-$\sigma$-algebras
${\mathcal B}_1\subset {\mathcal B}_2\subset \dots
\subset {\mathcal B}$, a sequence of functions $\{\phi_n\}$
is a \emph{martingale} with respect to $\{{\mathcal B}_n\}$ if
\begin{enumerate}
\item[(i)] $\phi_n$ is ${\mathcal B}_n$ measurable for all $n>0$; and
\item[(ii)] $E_\nu(\phi_{n+1}|{\mathcal B}_n)= \phi_n$\ $\nu$-a.e. $x$,
where $E_\nu$ denotes the expectation.
\end{enumerate}
If ``$=$'' in Condition (ii) is replaced by ``$\le$'' or ``$\ge$'',
then the sequence $\{\phi_n\}$ is called a \emph{supermartingale}
or \emph{submartingale} respectively.

A supermartingale $\{\phi_n\}$ is $L^1$ bounded if
$\sup_n E_\nu(|\phi_n|)<\infty$.

Note that if $\{\phi_n\}$ is a supermartingale, then
$\{-\phi_n\}$ is a submartingale.  $\{\phi_n\}$ is $L^1$ bounded
if and only if $\{-\phi_n\}$ is $L^1$ bounded.  So
Doob's martingale convergence theorem can be stated in the following way.

\begin{theorems}[Doob's martingale convergence theorem]
Every $L^1$ bounded supermartingale or submartingale $\{\phi_n\}$
converges almost everywhere.
\end{theorems}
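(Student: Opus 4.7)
The plan is to prove the convergence theorem by the standard Doob upcrossing argument from classical probability theory. First I would reduce to the submartingale case: if $\{\phi_n\}$ is an $L^1$-bounded supermartingale then $\{-\phi_n\}$ is an $L^1$-bounded submartingale, and convergence of one is equivalent to convergence of the other. Hence it suffices to treat a submartingale $\{\phi_n\}$ with $\sup_n E_\nu(|\phi_n|)<\infty$.

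Next, for each pair of real numbers $a<b$ I would define the number $U_n[a,b]$ of upcrossings of the interval $[a,b]$ by the finite sequence $\phi_1,\dots,\phi_n$, and establish Doob's upcrossing inequality
$$(b-a)\,E_\nu\bigl(U_n[a,b]\bigr)\le E_\nu\bigl((\phi_n-a)^+\bigr)-E_\nu\bigl((\phi_1-a)^+\bigr).$$
The proof is the classical stopping-time argument: recursively define bounded stopping times $\tau_1\le\sigma_1\le\tau_2\le\sigma_2\le\cdots\le n$, where $\tau_k$ is the first time after $\sigma_{k-1}$ at which $\phi$ drops below $a$ and $\sigma_k$ is the first time after $\tau_k$ at which $\phi$ rises above $b$. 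Applying the optional sampling inequality for submartingales to the sums of increments across the intervals $[\tau_k,\sigma_k]$ versus the interleaved intervals yields the bound.

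Letting $n\to\infty$ and using $\sup_n E_\nu((\phi_n-a)^+)\le |a|+\sup_n E_\nu(|\phi_n|)<\infty$, monotone convergence gives $E_\nu(U_\infty[a,b])<\infty$, so $U_\infty[a,b]<\infty$ $\nu$-almost everywhere. The set of $x$ for which $\{\phi_n(x)\}$ fails to converge in $[-\infty,\infty]$ coincides with $\{\liminf_n\phi_n<\limsup_n\phi_n\}$, which is the countable union, over pairs of rationals $a<b$, of the sets $\{U_\infty[a,b]=\infty\}$, and is therefore $\nu$-null. So $\phi_n$ converges $\nu$-a.e.\ in the extended real line. Finally, Fatou's lemma applied to $|\phi_n|$ yields $E_\nu(\liminf_n|\phi_n|)\le \sup_n E_\nu(|\phi_n|)<\infty$, so the almost-sure limit is finite.

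The main obstacle is the upcrossing inequality itself: the correct recursive definition of the stopping times and the careful application of optional sampling at bounded stopping times of a submartingale are the only non-routine ingredients. Once this inequality is in hand, the remainder of the argument is standard measure-theoretic bookkeeping, and no hypothesis beyond $L^1$-boundedness is needed.
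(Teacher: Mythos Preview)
Your proposal is correct and follows the classical route via Doob's upcrossing inequality. However, you should be aware that the paper does not prove this theorem at all: it is stated as a standard result from probability theory and used as a tool in the subsequent arguments (in particular, to obtain Lemma~\ref{Lcondzetan}). The paper only remarks that the supermartingale and submartingale cases are equivalent via $\phi_n\mapsto -\phi_n$, which you also observe, and then moves on. So there is nothing to compare your argument against; you have simply supplied the textbook proof of a result the authors take for granted.
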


Since a martingale is also a supermartingale, the theorem
gives convergence of $L^1$ bounded martingales.

\begin{lemma}\label{Lphi1}
Let $\a\in \P$ and $\{\zeta_n\}$ be a sequence of increasing measurable
partitions with $\zeta_n\nearrow \zeta$.
Then for $\phi_n(x)=I_\mu(\a|\zeta_n)(x)$,
$\phi^*: =\sup_{n}\phi_n \in L^1(\mu)$.
\end{lemma}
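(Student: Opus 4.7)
The plan is to establish an $L^1$ maximal inequality for $\phi^*$ by a standard Doob-type stopping argument that exploits the fact that $\{\zeta_n\}$ is increasing. Once this inequality is in hand, summing over the finitely many atoms of $\a\in\P$ and integrating via the layer-cake formula yields a bound in terms of $H_\mu(\a)$, which is finite.

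First I would decompose by atom. For each $A\in \a$ and $x\in A$ we have $\phi_n(x)=-\log \mu_x^{\zeta_n}(A)$, so
\[
\{x\in A:\phi^*(x)>t\}=\{x\in A:\inf_n\mu_x^{\zeta_n}(A)<e^{-t}\}.
\]
For fixed $t>0$ I would define the disjoint stopping sets
\[
E_n^A=\bigl\{x:\mu_x^{\zeta_n}(A)<e^{-t}\text{ and }\mu_x^{\zeta_k}(A)\ge e^{-t}\text{ for all }k<n\bigr\}.
\]
Because $\zeta_1\le\zeta_2\le\cdots$, the function $x\mapsto\mu_x^{\zeta_k}(A)$ is constant on $\zeta_n$-atoms for every $k\le n$, so $E_n^A$ is a union of $\zeta_n$-atoms.

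The key maximal estimate then follows from a direct computation: for each $\zeta_n$-atom $C\subset E_n^A$,
\[
\mu(A\cap C)=\mu_x^{\zeta_n}(A)\,\mu(C)<e^{-t}\mu(C)
\]
for any $x\in C$. Summing over such atoms and then over $n$ gives
\[
\mu\bigl(A\cap\{\phi^*>t\}\bigr)=\sum_n\mu(A\cap E_n^A)\le e^{-t}\sum_n\mu(E_n^A)\le e^{-t}.
\]
Combined with the trivial bound $\mu(A\cap\{\phi^*>t\})\le\mu(A)$ and summed over $\a$,
\[
\mu\{\phi^*>t\}\le\sum_{A\in\a}\min\{\mu(A),e^{-t}\}.
\]

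Finally I would integrate with the layer-cake formula and use $\int_0^\infty\min\{p,e^{-t}\}dt=-p\log p+p$ for $p\in(0,1]$ to obtain
\[
\int\phi^*\,d\mu=\int_0^\infty\mu\{\phi^*>t\}\,dt\le\sum_{A\in\a}\bigl(-\mu(A)\log\mu(A)+\mu(A)\bigr)=H_\mu(\a)+1.
\]
Since $\a\in\P$ is a finite partition, $H_\mu(\a)\le\log|\a|<\infty$, and hence $\phi^*\in L^1(\mu)$. The main obstacle is really just identifying the correct stopping construction so that each $E_n^A$ is $\zeta_n$-measurable; after that the estimate is essentially Doob's weak-type $(1,1)$ inequality applied to the indicator $\mathbf{1}_A$ relative to the increasing filtration $\{\sigma(\zeta_n)\}$, and the final integration is routine.
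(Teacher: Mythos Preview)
Your argument is correct and is precisely the classical Chung-type maximal inequality proof; the paper does not supply its own proof but simply cites Lemma~14.27 in Glasner and Lemma~2.1/Corollary~2.2 on p.~261 of Petersen, which carry out exactly the stopping-time and layer-cake computation you wrote. One cosmetic improvement: rather than summing over atoms $C\subset E_n^A$ (which presumes countability), write directly $\mu(A\cap E_n^A)=\int_{E_n^A}\mu_x^{\zeta_n}(A)\,d\mu(x)<e^{-t}\mu(E_n^A)$ using that $E_n^A\in\sigma(\zeta_n)$.
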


The proof of the lemma can be found in the proof of Lemma~14.27
in~\cite{Gl} or Lemma~2.1 and Corollary~2.2 on p.261 in \cite{Pet}.

For partitions $\{\zeta_n\}$ and $\zeta$ given in the last lemma,
let $\{\B(\zeta_n)\}$ and $\B(\zeta)$ be the sub-$\sigma$-algebras
generated by $\{\zeta_n\}$ and $\zeta$ respectively, that is,
$\{\B(\zeta_n)\}$ is the smallest sub-$\sigma$-algebra containing
elements of $\zeta_n$.
Let $\phi_n(x)=I_\mu(\a|\zeta_n)(x)=-\log \mu^{\zeta_n}_x(\a(x))$.
Then $\int \phi_n d\mu=H_\mu(\a|\zeta_n)$.
It is well known by Jensen's inequality that $\{\phi_n\}$ is
a supermartingale.

Using Doob's martingale convergence theorem, we know that
$\phi_n=I_\mu(\a|\zeta_n)$ converges to $I_\mu(\a|\zeta)$ almost everywhere.
Then Lemma~\ref{Lphi1} gives that the sequence $\{\phi_n\}$
is bounded by a $L^1$ function $\phi^*$.  So Lebesgue's dominated
convergence theorem gives convergence of $H_\mu(\a|\zeta_n)$ to
$H_\mu(\a|\zeta)$.  Hence the following lemma is established
(cf. Theorem 14.28 in \cite{Gl}).

\begin{lemma}\label{Lcondzetan}
Let $\a\in \P$ and $\{\zeta_n\}$ be a sequence of increasing measurable
partitions with $\zeta_n\nearrow \zeta$.   Then
\begin{enumerate}
  \item[(i)] $\lim_{n\to\infty}I_\mu(\a|\zeta_n)(x)=I_\mu(\a|\zeta)(x)$
for $\mu$-a.e. $x$;  and
  \item[(ii)] $\lim_{n\to\infty}H_\mu(\a|\zeta_n)=H_\mu(\a|\zeta)$.
\end{enumerate}
\end{lemma}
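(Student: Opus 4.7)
The plan is to deduce (i) directly from Doob's martingale convergence theorem applied to the conditional expectations of indicator functions, and then obtain (ii) as a consequence of Lebesgue's dominated convergence theorem with the $L^1$ majorant supplied by Lemma~\ref{Lphi1}. All the essential analytic content has been concentrated in that majorant lemma, so what remains is mostly bookkeeping.

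First I would let $\B(\zeta_n)$ and $\B(\zeta)$ denote the sub-$\sigma$-algebras generated by $\zeta_n$ and $\zeta$, noting that $\zeta_n\nearrow\zeta$ implies $\B(\zeta_n)\nearrow \B(\zeta)$ in the sense that $\bigvee_n \B(\zeta_n)=\B(\zeta)$ modulo $\mu$-null sets. For each atom $A\in\alpha$, set $g_n(x)=E_\mu(\chi_A\,|\,\B(\zeta_n))(x)$, which by the defining property of canonical conditional measures equals $\mu_x^{\zeta_n}(A)$ for $\mu$-a.e.\ $x$. Since $\{g_n\}$ is a uniformly bounded martingale (bounded by $1$) with respect to $\{\B(\zeta_n)\}$, Doob's theorem gives $g_n(x)\to E_\mu(\chi_A\,|\,\B(\zeta))(x)=\mu_x^\zeta(A)$ almost everywhere.

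Next, because $\alpha$ is finite, I can take the intersection of the full-measure sets obtained above over all $A\in\alpha$, so that simultaneously $\mu_x^{\zeta_n}(A)\to \mu_x^\zeta(A)$ for every $A\in\alpha$ and $\mu$-a.e.\ $x$. Choosing $A=\alpha(x)$ and noting that $\mu_x^\zeta(\alpha(x))>0$ for $\mu$-a.e.\ $x$ (by integrating against $\mu$ and using $\mu(A)=\int \mu_x^\zeta(A)\,d\mu$), the continuity of $-\log$ on $(0,1]$ yields
\[
I_\mu(\alpha\,|\,\zeta_n)(x)=-\log\mu_x^{\zeta_n}(\alpha(x))\longrightarrow -\log\mu_x^\zeta(\alpha(x))=I_\mu(\alpha\,|\,\zeta)(x)
\]
for $\mu$-a.e.\ $x$, proving (i). For (ii), Lemma~\ref{Lphi1} gives $\phi^*=\sup_n I_\mu(\alpha\,|\,\zeta_n)\in L^1(\mu)$; since $0\le I_\mu(\alpha\,|\,\zeta_n)\le \phi^*$, Lebesgue's dominated convergence theorem allows passage to the limit under the integral, yielding $H_\mu(\alpha\,|\,\zeta_n)\to H_\mu(\alpha\,|\,\zeta)$.

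The only genuinely subtle point is the identification of $\bigvee_n \B(\zeta_n)$ with $\B(\zeta)$ in the sense required for Doob's theorem; this is a standard consequence of the definition of Rokhlin's $\vee$ of measurable partitions but is worth stating explicitly. The remaining hypothesis needed is the $L^1$-domination, which is precisely the content of Lemma~\ref{Lphi1} and is the main technical input — without it one would have only $L^1$-convergence via Scheff\'e's lemma or dominated convergence for the expectations $g_n$, but not convergence of the $\log$'s. Once that hurdle is cleared, both conclusions fall out in a few lines.
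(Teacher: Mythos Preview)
Your proof is correct and follows essentially the same approach as the paper: Doob's martingale convergence theorem for part~(i), then Lemma~\ref{Lphi1} together with dominated convergence for part~(ii). The only cosmetic difference is that the paper applies Doob directly to $\phi_n=I_\mu(\alpha\,|\,\zeta_n)$ viewed as a supermartingale, whereas you apply it to the bounded martingales $\mu_x^{\zeta_n}(A)$ and then compose with $-\log$; your route has the advantage of making the identification of the limit completely transparent.
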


\begin{lemma}\label{Linduction}
Suppose $\a$, $\b$ and $\c$ are measurable partitions.
\begin{enumerate}
\item[(i)]
$I_\mu(\b_0^{n-1}|\c)(x)=I_\mu(\b|\c)(x)
+\sum_{i=1}^{n-1}I_\mu(\b|f^i(\b_0^{i-1}\vee \c))(f^i(x))$. Hence it follows
$H_\mu(\b_0^{n-1}|\c)=H_\mu(\b|\c)+\sum_{i=1}^{n-1}H_\mu(\b|f^i(\b_0^{i-1}\vee \c))$.

\item[(ii)]
$I_\mu(\alpha_0^{n-1}|\c)(x)=I_\mu(\alpha|f^{n-1}\c)(f^{n-1}(x))
+\sum_{i=0}^{n-2}I_\mu(\alpha|\alpha_1^{n-1-i}\vee f^i\c)(f^i(x))$.
Hence, $H_\mu(\alpha_0^{n-1}|\c)
=H_\mu(\alpha|f^{n-1}\c)+\sum_{i=0}^{n-2}H_\mu(\alpha|\alpha_1^{n-1-i}\vee f^i\c)$.
\end{enumerate}
\end{lemma}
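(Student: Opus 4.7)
The plan is to derive both identities by iterating the chain rule from Lemma~\ref{Lcond1}(ii), namely
\[
I_\mu(\gamma_1\vee\gamma_2|\c)(x)=I_\mu(\gamma_1|\c)(x)+I_\mu(\gamma_2|\gamma_1\vee\c)(x),
\]
and then translating the resulting terms by means of the $f$-invariance of $\mu$. For part~(i), I would peel off $\b$ from the left end of the join $\b_0^{n-1}=\b\vee f^{-1}\b\vee\cdots\vee f^{-(n-1)}\b$, applying the chain rule $n-1$ times to obtain
\[
I_\mu(\b_0^{n-1}|\c)(x)=I_\mu(\b|\c)(x)+\sum_{i=1}^{n-1}I_\mu(f^{-i}\b|\b_0^{i-1}\vee\c)(x).
\]
For part~(ii) I would peel from the right instead, writing $\a_0^{n-1}=f^{-(n-1)}\a\vee\a_0^{n-2}$ and iterating $n-1$ times, which yields
\[
I_\mu(\a_0^{n-1}|\c)(x)=I_\mu(f^{-(n-1)}\a|\c)(x)+\sum_{i=0}^{n-2}I_\mu(f^{-i}\a|\a_{i+1}^{n-1}\vee\c)(x).
\]

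The second ingredient I need is the standard translation identity
\[
I_\mu(f^{-i}\b\,|\,f^{-i}\eta)(x)=I_\mu(\b\,|\,\eta)(f^i x)\qquad \mu\text{-a.e. }x,
\]
valid for any measurable partition $\eta$ because $f_*\mu=\mu$ forces the canonical conditional measures to satisfy $\mu_x^{f^{-i}\eta}(f^{-i}A)=\mu_{f^i x}^\eta(A)$; this in turn follows from the defining property of conditional expectation together with the change-of-variables formula for the push-forward. To apply it I would write the conditioning partitions as $\b_0^{i-1}\vee\c=f^{-i}\bigl(f^i(\b_0^{i-1}\vee\c)\bigr)$ in case~(i), and observe for case~(ii) that $f^i\a_{i+1}^{n-1}=\a_1^{n-1-i}$ by the reindexing $k=j-i$ inside $\vee_{j=i+1}^{n-1}f^{-j}\a$, so that $f^i(\a_{i+1}^{n-1}\vee\c)=\a_1^{n-1-i}\vee f^i\c$. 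This converts the peeled sums into exactly the forms asserted in the lemma.

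The entropy statements then follow at once: integrating each information identity against $d\mu$ and using $f$-invariance to absorb the $f^i$, namely $\int I_\mu(\b|\eta)(f^i x)\,d\mu(x)=H_\mu(\b|\eta)$, gives the stated decompositions of $H_\mu(\b_0^{n-1}|\c)$ and $H_\mu(\a_0^{n-1}|\c)$. The only real obstacle is bookkeeping for part~(ii): peeling from the right makes the conditioning carry the tail $\a_{i+1}^{n-1}$, and I have to verify carefully that pushing forward by $f^i$ turns this tail into the prefix $\a_1^{n-1-i}$ appearing in the statement. Once the chain rule and the $f$-invariance of conditional measures are in hand, nothing further is required.
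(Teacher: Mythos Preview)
Your proposal is correct and follows essentially the same route as the paper: both arguments iterate the chain rule from Lemma~\ref{Lcond1}(ii) and then use the translation identity $I_\mu(f^{-i}\b|f^{-i}\eta)(x)=I_\mu(\b|\eta)(f^i x)$ (coming from $f$-invariance of $\mu$) to rewrite each summand, with the entropy identities obtained by integration. The only cosmetic difference is that for part~(ii) the paper interleaves the two steps---writing $I_\mu(\a_0^{n-1}|\c)(x)=I_\mu(\a_0^{n-2}|f\c)(f(x))+I_\mu(\a|\a_1^{n-1}\vee\c)(x)$ and then inducting---whereas you first fully expand via the chain rule and then translate each term; the content is the same.
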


\begin{proof}
(i) Replacing $\a$ and $\b$ by $\b_0^{i-1}$ and $f^{-i}\b$
in Lemma~\ref{Lcond1}(ii) respectively, we have
\begin{equation*}
\begin{split}
 I_\mu(\b_0^{i}|\c)(x)
=&I_\mu(\b_0^{i-1}|\c)(x)+I_\mu(f^{-i}\b|\b_0^{i-1}\vee\c)(x) \\
=&I_\mu(\b_0^{i-1}|\c)(x)+I_\mu(\b|f^i(\b_0^{i-1}\vee \c))(f^i(x)).
\end{split}
\end{equation*}
Summing the equality over $i$ from $1$ to $n-1$ we get
the first equality in part (i).
The second one follows by integrating the first equality.

(ii) Replacing $\a$ and $\b$ by $\a_1^{n-1}$ and $\a$ in Lemma~\ref{Lcond1}(ii) respectively,
we have
\begin{equation*}
\begin{aligned}
I_\mu(\alpha_0^{n-1}|\c)(x)=&I_\mu(\alpha_1^{n-1}|\c)(x)
  +I_\mu(\alpha|\alpha_1^{n-1}\vee\c)(x)\\
=&I_\mu(\alpha_0^{n-2}|f\c)(f(x))+I_\mu(\alpha|\alpha_1^{n-1}\vee\c)(x).
\end{aligned}
\end{equation*}
By induction, we have
$$I_\mu(\alpha_0^{n-1}|\c)(x)=I_\mu(\alpha|f^{n-1}\c)(f^{n-1}(x))
+\sum_{i=0}^{n-2}I_\mu(\alpha|\alpha_1^{n-1-i}\vee f^i\c)(f^i(x)).$$
Integrating both sides of the formula, we have the second equality of the part.
\end{proof}


\begin{lemma}\label{Lcond2}
\begin{enumerate}
\item[(i)]
For any $\eta_1,\eta_2\in \P^u$,
$H_\mu(\eta_2|\eta_1), H_\mu(\eta_1|\eta_2)< \infty$.  Hence
\begin{equation*}
\lim_{n\to \infty}\frac{1}{n}H_\mu(\eta_2|\eta_1)
=0
=\lim_{n\to \infty}\frac{1}{n}H_\mu(\eta_1|\eta_2).
\end{equation*}

\item[(ii)]
For any $\a,\b\in \P$ and $\eta\in \P^u$,
\begin{equation*}
\lim_{n\to \infty}\frac{1}{n}H_\mu(\a_0^{n-1}|\b_0^{n-1}\vee\eta)
=0
=\lim_{n\to \infty}\frac{1}{n}H_\mu(\b_0^{n-1}|\a_0^{n-1}\vee\eta).
\end{equation*}
\end{enumerate}
\end{lemma}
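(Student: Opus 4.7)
Write $\eta_j=\b_j\cap W^u_\loc$ with $\b_j\in\P$. Every $\eta_1$-atom lies in a single $\b_1$-cell and hence has diameter at most $\e_0$, so at most $|\b_2|$ cells of the finite partition $\b_2$ can meet it. On the local unstable leaf each such $\b_2$-cell contributes at most one $\eta_2$-atom, so $\eta_2$ subdivides every $\eta_1$-atom into at most $|\b_2|$ pieces. Hence $H_\mu(\eta_2|\eta_1)\le\log|\b_2|<\infty$; dividing by $n$ gives the first limit, and the second follows by symmetry.

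\textbf{Part (ii).} My plan is to use the chain-rule identity
$$H_\mu(\a_0^{n-1}|\b_0^{n-1}\vee\eta)=H_\mu((\a\vee\b)_0^{n-1}|\eta)-H_\mu(\b_0^{n-1}|\eta),$$
which follows from Lemma~\ref{Lcond1}(ii) and $\b_0^{n-1}\le(\a\vee\b)_0^{n-1}$, and then estimate the left-hand side by a pointwise count of $\a$-atoms. Let $\lambda>1$ be the minimum unstable expansion rate of $f$. The atom $(\b_0^{n-1}\vee\eta)(x)$ sits on $W^u(x)$ inside a $d^u$-ball of radius $c\e_0\lambda^{-(n-1)}$, since $f^{n-1}$ sends it into $\b(f^{n-1}x)\cap W^u_\loc(f^{n-1}x)$ of unstable diameter $\le c\e_0$ while $f^{-(n-1)}$ contracts unstable distances by $\lambda^{-(n-1)}$; in particular, $d^u(f^iy,f^ix)\le c\e_0\lambda^{-(n-1-i)}$ for every $y$ in this atom and $0\le i\le n-1$. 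Letting $c_i(n,x)$ denote the number of $\a$-cells meeting this $d^u$-ball about $f^ix$, one has $c_i\le|\a|$, and $c_i=1$ unless $f^ix\in E_{n-1-i}:=\{z:\mathrm{dist}(z,\partial\a)\le c\e_0\lambda^{-(n-1-i)}\}$. Since the number of $\a_0^{n-1}$-atoms meeting $(\b_0^{n-1}\vee\eta)(x)$ is at most $\prod_i c_i$, $f$-invariance of $\mu$ yields
$$H_\mu(\a_0^{n-1}|\b_0^{n-1}\vee\eta)\le\int\sum_{i=0}^{n-1}\log c_i\,d\mu\le\log|\a|\sum_{k=0}^{n-1}\mu(E_k).$$
Because $\mu(\partial\a)=0$, the decreasing sequence $E_k$ shrinks to $\partial\a$ and $\mu(E_k)\to 0$; Ces\`aro averaging then gives $\frac{1}{n}\sum_{k=0}^{n-1}\mu(E_k)\to 0$, which is the first limit. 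The second follows by swapping $\a$ and $\b$.

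\textbf{Main obstacle.} The delicate geometric step is the pointwise count $K(n,x)\le\prod_ic_i(n,x)$: it requires that for every $y$ in the $(\b_0^{n-1}\vee\eta)$-atom, the label $\a(f^iy)$ coincides with $\a(f^ix)$ whenever $f^ix$ lies farther than the Bowen scale $c\e_0\lambda^{-(n-1-i)}$ from $\partial\a$. Establishing this control uniformly over the atom combines the unstable-expansion bound with the regularity of $\a$ near its boundary, and is, in essence, an iterated application of part (i) at each time step. The Ces\`aro-of-$\mu(E_k)$ argument then absorbs the accumulation over the $n$ time steps into an $o(n)$ total, which is the essential dynamical content of the lemma.
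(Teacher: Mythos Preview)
Your Part~(i) is correct and matches the paper's argument exactly.

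In Part~(ii) there is a genuine gap. Your final step reads ``Because $\mu(\partial\a)=0$, the decreasing sequence $E_k$ shrinks to $\partial\a$ and $\mu(E_k)\to 0$.'' But the hypothesis $\mu(\partial\a)=0$ is \emph{not} part of the lemma: elements of $\P$ are arbitrary finite measurable partitions of diameter at most~$\e_0$, with no regularity of their boundaries assumed. Without this, your estimate gives only
\[
\limsup_{n\to\infty}\frac{1}{n}H_\mu(\a_0^{n-1}\mid\b_0^{n-1}\vee\eta)\ \le\ \log|\a|\cdot\mu(\partial\a),
\]
which need not vanish. Nor can you reduce to the null-boundary case: replacing $\a$ by some $\a'$ with $\mu(\partial\a')=0$ and then comparing $H_\mu(\a_0^{n-1}|\cdot)$ to $H_\mu((\a')_0^{n-1}|\cdot)$ is precisely the statement you are trying to prove. (Incidentally, the chain-rule identity you announce at the start of Part~(ii) is correct but never used; your argument proceeds by a direct pointwise count.)

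The paper's proof avoids any boundary hypothesis by a purely measure-theoretic route. It applies Lemma~\ref{Linduction}(ii) with $\c=\b_0^{n-1}\vee\eta$ to obtain
\[
H_\mu(\a_0^{n-1}\mid\b_0^{n-1}\vee\eta)\ \le\ H_\mu(\a\mid\b\vee f^{n-1}\eta)+\sum_{i=1}^{n-1}H_\mu\bigl(\a\mid\a_1^i\vee\b_1^i\vee f^{n-i-1}\eta\bigr),
\]
and then observes that the partitions $\zeta_n:=\a_1^n\vee\b_1^n\vee\eta$ are increasing with $\diam\zeta_n(x)\to 0$ (since $\eta(x)\subset W^u_\loc(x)$ and $f$ expands along $W^u$). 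By Lemma~\ref{Lcondzetan}(ii) this forces $H_\mu(\a\mid\zeta_n)\to 0$, hence the Ces\`aro average of the summands tends to~$0$. This martingale-convergence mechanism needs no assumption on $\partial\a$.
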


\begin{proof}
(i) Recall that $\P=\P_{\e_0}$ is the set of finite measurable partitions
of diameter less than or equal to $\e_0$.  For any $\eta_1, \eta_2\in \P^u$,
there exist $\alpha_1, \alpha_2\in \P$ such that
$\eta_i(x)=\a_i(x)\cap W^u_\loc(x)$, $i=1,2$, for all $x\in M$.
Let $N_i$ be the cardinality of $\alpha_i$.
Then for any $x\in M$, $\eta_1(x)$ intersects at most $N_2$ elements
of $\a_2$, and therefore intersects at most $N_2$ elements of $\eta_2$.
Similarly, $\eta_2(x)$ intersects at most $N_1$ elements
of $\eta_1$.  So we have $H_\mu(\eta_2|\eta_1)\le \log N_2$ and
$H_\mu(\eta_1|\eta_2)\le\log N_1$.

(ii) Applying Lemma~\ref{Linduction}(ii) with $\c=\b_0^{n-1}\vee \eta$,
and using the fact $f^i\b_0^{n-1}=\b_{-i}^{n-i-1}\ge \b_{1}^{n-i-1}$, we have
\begin{equation*}
\begin{split}
H_\mu(\a_0^{n-1}|\b_0^{n-1}\vee\eta)
\le&H_\mu(\a|f^{n-1}\b_0^{n-1}\vee f^{n-1}\eta)
+\sum_{i=0}^{n-2}H_\mu(\a|\a_1^{n-1-i}\vee \b_{1}^{n-i-1}\vee f^i\eta) \\
\le &H_\mu(\a|\b\vee f^{n-1}\eta)
+\sum_{i=1}^{n-1}H_\mu(\a|\a_1^{i}\vee \b_{1}^{i}\vee f^{n-i-1}\eta) \\
\end{split}
\end{equation*}
Since for any $x$,
$(\a_1^{n}\vee \b_{1}^{n}\vee \eta)(x)\subset W^u_\loc(x)$ and
$\diam (\a_1^{n}\vee \b_{1}^{n}\vee \eta)(x)\to 0$ as $n\to \infty$,
we have
$\disp \lim_{n\to \infty}H_\mu(\a|\a_1^{n}\vee \b_{1}^{n}\vee \eta)=0$ by Lemma \ref{Lcondzetan}(ii).
It means that the terms in the summation in the last inequality tend
to $0$.  Hence, we get that
\[
\lim_{n\to \infty}\frac{1}{n}H_\mu(\a_0^{n-1}|\b_0^{n-1}\vee\eta)=0.  \qedhere
\]
\end{proof}

\begin{lemma}\label{Lcond}
\begin{enumerate}
\item[(i)]
For any $\a\in \P$ and $\eta_1,\eta_2\in \P^u$,
$h_\mu(f, \a|\eta_1)=h_\mu(f, \a|\eta_2)$.

\item[(ii)]
For any $\a,\b\in \P$ and $\eta\in \P^u$,
\begin{equation*}
\begin{split}
\limsup_{n\to \infty}\frac{1}{n}H_\mu(\alpha_0^{n-1}|\eta)
=&\limsup_{n\to \infty}\frac{1}{n}H_\mu(\beta_0^{n-1}|\eta),  \\
\liminf_{n\to \infty}\frac{1}{n}H_\mu(\alpha_0^{n-1}|\eta)
=&\liminf_{n\to \infty}\frac{1}{n}H_\mu(\beta_0^{n-1}|\eta).
\end{split}
\end{equation*}
Hence, $h_\mu(f, \a|\eta)=h_\mu(f, \b|\eta)$.
\end{enumerate}
\end{lemma}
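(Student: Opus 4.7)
The plan is to reduce both parts to the chain-rule inequality Lemma~\ref{Lcond1}(iv) combined with the two negligibility statements in Lemma~\ref{Lcond2}. In both cases the strategy will be to dominate $\frac{1}{n}H_\mu(\a_0^{n-1}|\cdot)$ by the desired quantity plus an error term that is $o(1)$, then conclude by symmetry.

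For part~(i), I will apply Lemma~\ref{Lcond1}(iv) with $(\a,\b,\c)\leftarrow(\eta_2,\a_0^{n-1},\eta_1)$ to obtain
\[
H_\mu(\a_0^{n-1}|\eta_1)\le H_\mu(\eta_2|\eta_1)+H_\mu(\a_0^{n-1}|\eta_2).
\]
Dividing by $n$ and taking the limsup, Lemma~\ref{Lcond2}(i) kills the first term on the right, giving $h_\mu(f,\a|\eta_1)\le h_\mu(f,\a|\eta_2)$. Interchanging $\eta_1$ and $\eta_2$ then yields equality.

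For part~(ii), I will apply Lemma~\ref{Lcond1}(iv) with $(\a,\b,\c)\leftarrow(\b_0^{n-1}\vee\eta,\a_0^{n-1},\eta)$. Lemma~\ref{Lcond1}(ii), together with the observation that $\eta$ is measurable with respect to $\b_0^{n-1}\vee\eta$ (so that $H_\mu(\eta|\b_0^{n-1}\vee\eta)=0$), collapses the first summand to $H_\mu(\b_0^{n-1}|\eta)$, giving
\[
H_\mu(\a_0^{n-1}|\eta)\le H_\mu(\b_0^{n-1}|\eta)+H_\mu(\a_0^{n-1}|\b_0^{n-1}\vee\eta).
\]
Upon dividing by $n$, Lemma~\ref{Lcond2}(ii) ensures that the last term vanishes in the limit. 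Taking $\limsup$ (respectively $\liminf$) of both sides yields the corresponding inequality of part~(ii); swapping the roles of $\a$ and $\b$ and invoking the second half of Lemma~\ref{Lcond2}(ii) gives the reverse inequalities, hence equality. The final assertion $h_\mu(f,\a|\eta)=h_\mu(f,\b|\eta)$ then follows immediately from the definition.

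I do not anticipate a genuine obstacle here: the argument is the standard ``absorb the discrepancy into an $o(n)$ error'' move familiar from the classical identity $h_\mu(f,\a)=h_\mu(f,\b)$ whenever $H_\mu(\a|\b)+H_\mu(\b|\a)<\infty$. The only subtlety is that $\eta$ is not $f$-invariant, so the chain rule cannot be driven directly by the partitions $f^{-k}\eta$; instead the enlarged partitions $\b_0^{n-1}\vee\eta$ play the role of the conditional $\sigma$-algebra, and Lemma~\ref{Lcond2}(ii) is precisely the input that makes this substitution effective.
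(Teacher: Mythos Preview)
Your proposal is correct and follows essentially the same approach as the paper: both parts use the chain-rule inequalities from Lemma~\ref{Lcond1} to bound the difference of the two conditional entropies by an error term that Lemma~\ref{Lcond2} shows is $o(n)$, then conclude by symmetry. The only cosmetic difference is that in part~(ii) the paper reaches the key inequality via Lemma~\ref{Lcond1}(i)+(ii) (passing through $H_\mu(\a_0^{n-1}\vee\b_0^{n-1}|\eta)$), whereas you route through~(iv)+(ii); the resulting inequality and the rest of the argument are identical.
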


\begin{proof}
(i) By Lemma~\ref{Lcond1}(iv) we have
\begin{equation*}
\begin{split}
H_\mu(\alpha_0^{n-1}|\eta_1)
\le H_\mu(\alpha_0^{n-1}|\eta_2)+H_\mu(\eta_2|\eta_1), \\
H_\mu(\alpha_0^{n-1}|\eta_2)
\le H_\mu(\alpha_0^{n-1}|\eta_1)+H_\mu(\eta_1|\eta_2).
\end{split}
\end{equation*}
Hence by using Lemma~\ref{Lcond2}(i) we get
\[
\limsup_{n\to \infty}\frac{1}{n}H_\mu(\alpha_0^{n-1}|\eta_1)
=\limsup_{n\to \infty}\frac{1}{n}H_\mu(\alpha_0^{n-1}|\eta_2).
\]
Then the result of part (i) of the lemma follows.

(ii) Similarly by Lemma~\ref{Lcond1}(i) and (ii) we have
\begin{equation*}
\begin{split}
H_\mu(\a_0^{n-1}|\eta)
\le H_\mu(\b_0^{n-1}|\eta)+H_\mu(\a_0^{n-1}|\b_0^{n-1}\vee \eta), \\
H_\mu(\b_0^{n-1}|\eta)
\le H_\mu(\a_0^{n-1}|\eta)+H_\mu(\b_0^{n-1}|\a_0^{n-1}\vee \eta).
\end{split}
\end{equation*}
By dividing the inequalities by $n$, and taking $\limsup$ and $\liminf$,
and then by Lemma~\ref{Lcond2}(ii) we get equalities of the lemma.
\end{proof}

By this lemma, $h_\mu(f, \a|\eta)$ is independent of $\a$ and $\eta$
as long as $\a\in \P$ and $\eta\in \P^u$.
Hence we can define the unstable metric entropy $h_\mu^u(f)=h_\mu(f, \a|\eta)$
for any $\a\in \P$ and $\eta\in \P^u$.

\subsection{Increasing partitions $\xi_u$}\label{SSmetric2}

For an ergodic measure $\mu$ with positive Lyapunov exponents
$\l_1>\l_2>\cdots >\l_{\tilde u}>0$, let
$E^{(1)}\subset E^{(2)} \subset \cdots \subset E^{(\tilde u)}$ denote
the subbundles in the tangent bundle consisting of vectors whose Lyapunov
exponents are greater than or equal to $\l_1, \l_2, \cdots, \l_{\tilde u}$
respectively.  It is well known that if $f$ is $C^{1+\alpha}$, then for almost every $x$
there exist unstable manifolds
$W^{(1)}(x)\subset W^{(2)}(x) \subset \cdots \subset W^{(\tilde u)}(x)$
such that if $y\in W^{(i)}(x)$, then
$\disp \limsup_{n\to \infty}-\frac{1}{n}\log d(f^{-n}y, f^{-n}x)\le -\l_i$ for any $1\leq i\leq \tilde u$.
The entropies $h_\mu(f, \xi_i)$ are determined by a hierarchy of partitions
given in the next lemma.

We mention that a partition $\b$ of $M$ is a generator if
$\bigvee_{n=1}^\infty f^{-n}\b=\varepsilon$ where $\varepsilon$
is a partition of $M$ into points up to a set of zero measure.

\begin{lemma}[Lemma 9.1.1 in \cite{LY2}]\label{Lpartition}
Assume that $f$ is $C^{1+\alpha}$. Then there exist measurable partitions
$\xi_1\ge \xi_2\ge \cdots \ge \xi_{\tilde u}$ on $M$ such that
for each $1\le i\le {\tilde u}$,
\begin{enumerate}
  \item[(i)] $\xi_i$ is subordinate to $W^{(i)}$,
  \item[(ii)] $\xi_i$ is increasing,
  \item[(iii)] $\xi_i$ is a generator.
\end{enumerate}
\end{lemma}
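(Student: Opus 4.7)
The plan is to build $\xi_i$ by a Sinai--Pesin style construction: pick a ``nice'' set of positive measure, define a crude partition whose atoms inside this set are local pieces of $W^{(i)}$, and then use forward iterates to turn it into an increasing partition. Concretely, I would first choose a Lyapunov regular point $x_0$ and a radius $\rho>0$ so small that $\mu(\partial B(x_0,\rho))=0$, that $\mu(B(x_0,\rho))>0$, and that for $\mu$-a.e.\ $x\in B(x_0,\rho)$ the local unstable manifold $W^{(i)}_{\mathrm{loc}}(x)$ crosses $B(x_0,\rho)$ in a single open disk of definite size (this is possible by Pesin theory, after possibly restricting to a Pesin block of positive measure).

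Having fixed this ball, define a partition $\eta_i^0$ by
\[
\eta_i^0(x)=\begin{cases}
\text{connected component of } W^{(i)}_{\mathrm{loc}}(x)\cap B(x_0,\rho) \text{ containing }x,& x\in B(x_0,\rho),\\
M\setminus B(x_0,\rho),& x\notin B(x_0,\rho).
\end{cases}
\]
Since $W^{(1)}\subset W^{(2)}\subset\cdots\subset W^{(\tilde u)}$, one immediately has $\eta_1^0\ge \eta_2^0\ge\cdots\ge\eta_{\tilde u}^0$. Now set
\[
\xi_i=\bigvee_{n=0}^\infty f^n\eta_i^0.
\]
This choice makes (ii) a one-line verification: $f^{-1}\xi_i=\bigvee_{n\ge -1}f^n\eta_i^0\ge \xi_i$. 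The inclusions $\xi_1\ge\xi_2\ge\cdots\ge\xi_{\tilde u}$ descend directly from those between the $\eta_i^0$'s.

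For (i) I would argue in two parts. That $\xi_i(x)\subset W^{(i)}(x)$ for $\mu$-a.e.\ $x$ comes from Poincaré recurrence: the backward orbit visits $B(x_0,\rho)$ at some time $-n_0$, and for that $n_0$ one has $f^{n_0}\eta_i^0(x)\subset f^{n_0}\bigl(W^{(i)}_{\mathrm{loc}}(f^{-n_0}x)\bigr)\subset W^{(i)}(x)$. That $\xi_i(x)$ contains a neighborhood of $x$ in $W^{(i)}(x)$ is more delicate: one needs to control, along the backward orbit, how close $f^{-n}x$ comes to $\partial B(x_0,\rho)$ inside its unstable plaque. A Borel--Cantelli argument based on Pesin estimates (exponential estimates for the measure of small tubular neighborhoods of $\partial B$, combined with subexponential non-recurrence of typical orbits) shows that, for $\mu$-a.e.\ $x$, the unstable radius of $\eta_i^0(f^{-n}x)$ decays at most subexponentially, while the forward expansion of $f^n$ along $W^{(i)}$ is exponential of definite rate. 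Hence the intersection $\bigcap_{n\ge 0}f^n\eta_i^0(x)$ still contains a ball of positive $d^u$-radius around $x$ inside $W^{(i)}(x)$.

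For (iii), note that $\bigvee_{n=0}^\infty f^{-n}\xi_i=\bigvee_{k\in\mathbb Z}f^k\eta_i^0$, so we must show this two-sided join separates $\mu$-a.e.\ points. Two distinct points $x\neq y$ are separated as follows: if $y\notin W^{(\tilde u)}(x)$, they are separated by forward hyperbolicity plus the ball $B(x_0,\rho)$; if $y\in W^{(i)}(x)$ then $d^u(f^nx,f^ny)\ge c\,e^{n\lambda_i}d^u(x,y)$ grows, so for large $n$ with $f^nx\in B(x_0,\rho)$ the distance exceeds $\operatorname{diam}\eta_i^0(f^nx)$ and the two points fall into different atoms of $f^{-n}\eta_i^0$. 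The main obstacle I would expect is the ``open neighborhood in $W^{(i)}(x)$'' part of (i): it is the only place where one must balance the quantitative geometry of Pesin charts against the genuine non-invariance of $\eta_i^0$, and where the $C^{1+\alpha}$ hypothesis enters crucially through the absolute continuity and size estimates of the local unstable laminations.
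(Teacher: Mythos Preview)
Your construction is essentially the one the paper quotes from \cite{LY2} (the paper does not prove this lemma itself but sketches the Ledrappier--Young construction immediately after the statement): a crude partition whose atoms over a small positive-measure set are local $W^{(i)}$-plaques and whose remaining atom is the complement, then refined by all forward iterates $\xi_i=\bigvee_{j\ge 0}f^j\hat\xi_{i,z}$. The only cosmetic difference is that the paper, following \cite{LY2}, slices a set $S_i(z,r)=\bigcup_{y\in W(z,r)}W^{(i)}(y,r)$ built over a small transversal disk $W(z,r)$ rather than a round ball $B(x_0,\rho)$, and records that the ``open neighborhood in $W^{(i)}$'' part of~(i) holds for Lebesgue-a.e.\ choice of the radius $r$ via exactly the Borel--Cantelli mechanism you describe.
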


To construct such a partition, the authors in \cite{LY2}
first take a point $z$ and then take
\begin{equation}\label{fdefS}
S_i(z,r)= \bigcup_{y\in W(z,r)} W^{(i)}(y, r)
\end{equation}
where $W(z,r)$ is an open ball of radius $r$ centered at $z$
inside a local manifold $W$ passing through $z$ transversally
to the unstable foliation $W^{(i)}$, and $W^{(i)}(y, r)$ are local
unstable manifolds.
Moreover, $z$ and $S_i(z,r)$ are taken in such a way that $\mu(S_i(z,r))>0$
for any $r>0$.
Then define a partition $\hat\xi_{i,z}$ such that
$\hat\xi_{i,z}(y)=W^{(i)}(\bar{y}, r)$ if $y\in S_i(z,r)$,
where $\bar{y}\in W(z,r)$ and $y\in W^{(i)}(\bar{y},r)$, and
$\hat\xi_{i,z}(y)=M\setminus S_i(z,r)$ otherwise.
Next take $\xi_i=\xi_{i,z}:=\vee _{j\geq 0}f^j\hat{\xi}_{i,z}$.
It has been proven (see e.g. \cite{LY2}) that if $\mu$ is ergodic,
then for almost every
small real number $r>0$ in the sense of Lebesgue measure,
$\xi_i$ is subordinate to unstable manifolds $W^{(i)}$ and therefore
is a partition satisfying Lemma~\ref{Lpartition}.

The above construction of such partitions is also carried out in \cite{LS} and \cite{LY1}. Though by the construction it is unclear whether the diameter
of $\xi_i(x)$ is bounded above or below in the metric $d^i$,
the Riemannian metric restricted to $W^{(i)}$,
it has been proven in \cite{LY2} that
$$H_\mu(f^{-1}\xi_i|\xi_i)
=-\int_M \log \mu_x^{\xi_i}\left((f^{-1}\xi_i)(x)\right)d\mu(x)
$$
is finite.
It is also proved that $h_\mu(f, \xi_i):=H_\mu(\xi_i|f\xi_i)=H_\mu(f^{-1}\xi_i|\xi_i)$
is independent of the choice of $\xi_i$ as long as $\xi_i$
satisfies the above conditions (cf. Subsection~(3.1) in \cite{LY1}).

If there are $u$ distinct Lyapunov exponents on unstable subbundle, then the
$u$th unstable foliation are the unstable foliation of the
partially hyperbolic system $f$.
Recall that $\Q^u$ denote the set of partitions $\xi_u$
satisfying (i)-(iii) above with $i=u$. The above construction of $\xi_u$ still applies
even if $f$ is only assumed to be $C^1$, since the unstable foliation of $f$ always exists under $C^1$ regularity.

Denote $S=S_u(z,r)$, the set given in \eqref{fdefS}.
Recall that $\hat\xi_{i,z}$ is a partition defined above
such that $\xi_i=\xi_{i,z}=\vee _{j\geq 0}f^j\hat{\xi}_{i,z}$.
For $i=u$ we denote $\hat\xi=\hat\xi_{u,z}$.
Recall by the notation we use,
\begin{equation*}\label{fdefxik}
\hat\xi_{-k}^{0}=\vee_{j= 0}^{k} f^j\hat{\xi}_{u,z}.
\end{equation*}
We further denote $\hat\xi_{-k}=\hat\xi_{-k}^{0}$.
Hence $\xi=\hat\xi_{-\infty}$.

\begin{lemma}\label{Lepsilonest}
Suppose $\mu$ is an ergodic measure and $\a\in \P$.
For any $\e>0$, there exists $K>0$ such that for any $k\ge K$,
\[
\limsup_{n\to \infty}H_\mu(\alpha|\alpha_1^{n}\vee \hat\xi_{-k}^{n}) \le \e.
\]
\end{lemma}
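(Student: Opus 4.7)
The plan is to reduce the claim to two successive applications of Lemma~\ref{Lcondzetan}(ii) -- first letting $n\to\infty$ with $k$ fixed, then letting $k\to\infty$ -- using the generator property of $\xi\in\Q^u$ from Lemma~\ref{Lpartition}(iii) as the key input.

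First I would fix $k\ge 0$ and note that $\{\a_1^n\vee\hat\xi_{-k}^n\}_{n\ge 0}$ is an increasing sequence of measurable partitions converging to
\[
\zeta_k:=\Big(\bigvee_{i=1}^{\infty}f^{-i}\a\Big)\vee\Big(\bigvee_{l=-\infty}^{k}f^l\hat\xi\Big).
\]
Since $\a\in\P$ is finite, Lemma~\ref{Lcondzetan}(ii) gives
\[
\lim_{n\to\infty}H_\mu(\a\mid\a_1^n\vee\hat\xi_{-k}^n)=H_\mu(\a\mid\zeta_k).
\]
The sequence on the left is in fact monotone decreasing in $n$ by Lemma~\ref{Lcond1}(v), so the $\limsup$ appearing in the statement is this limit.

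Next I would show that $\{\zeta_k\}_{k\ge 0}$ increases to the point partition of $M$ (mod $\mu$). Monotonicity in $k$ is immediate from the definition. For the identification of the limit, a short bookkeeping using $\xi=\bigvee_{j=0}^{\infty}f^j\hat\xi$ yields $f^{-n}\xi=\bigvee_{m=-n}^{\infty}f^m\hat\xi$ for every $n\ge 0$, and hence
\[
\bigvee_{k=0}^{\infty}\Big(\bigvee_{l=-\infty}^{k}f^l\hat\xi\Big)=\bigvee_{l=-\infty}^{\infty}f^l\hat\xi=\bigvee_{n=0}^{\infty}f^{-n}\xi.
\]
Since $\xi$ is a generator by Lemma~\ref{Lpartition}(iii), the right-hand side coincides with the point partition of $M$ mod $\mu$, and therefore so does $\bigvee_{k}\zeta_k$.

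Finally, applying Lemma~\ref{Lcondzetan}(ii) once more, this time to the increasing sequence $\{\zeta_k\}$, yields $\lim_{k\to\infty}H_\mu(\a\mid\zeta_k)=0$. Picking $K$ with $H_\mu(\a\mid\zeta_k)\le\e$ for all $k\ge K$ completes the proof. The only conceptual step is the algebraic identity $\bigvee_{l=-\infty}^{\infty}f^l\hat\xi=\bigvee_{n=0}^{\infty}f^{-n}\xi$, which is what lets us cash in the generator property of $\xi$; every other step is a routine manipulation of partition refinements together with the martingale-type continuity in Lemma~\ref{Lcondzetan}.
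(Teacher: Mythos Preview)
Your proof is correct and takes a genuinely different route from the paper's. The paper argues geometrically: it sets $S_{-k}=\bigcup_{i=0}^{k}f^iS$ and uses ergodicity to make $\mu(M\setminus S_{-k})$ small; on $S_{-k}$ the element $(\a_1^{n}\vee\hat\xi_{-k}^{n})(x)$ lies inside a local unstable leaf and hence shrinks to a point as $n\to\infty$, so the information function vanishes there by dominated convergence, while on the complement the contribution is bounded by $\mu(M\setminus S_{-k})\cdot\log N_\a\le\e$. Your argument is purely measure-theoretic: two applications of the martingale continuity in Lemma~\ref{Lcondzetan}(ii), first in $n$ and then in $k$, with the generator property of $\xi$ from Lemma~\ref{Lpartition}(iii) identifying the final limit partition as the point partition. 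Your approach is cleaner and avoids the explicit splitting of the space; the paper's argument, on the other hand, makes the underlying geometric mechanism transparent (namely that $\hat\xi_{-k}$ already cuts into unstable pieces on a set of measure close to~$1$) and yields a concrete recipe for choosing $K$ in terms of $\mu(M\setminus S_{-k})$.
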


\begin{proof}
Denote $S_{-k}=\cup_{i=0}^{k}f^iS$, where $S=S_u(z,r)$ is given by \eqref{fdefS}.

Let $\e>0$.  Since $\mu$ is ergodic, $\mu S_{-k}\to 1$ as $k\to \infty$.
So there exists $K>0$ such that for any $k\ge K$,
$\mu(M\setminus S_{-k})\le \e/log N_a$, where $N_\a$ is the cardinality of
the partition $\a$.

Write
\[
H_\mu(\alpha|\alpha_1^{n}\vee \hat\xi_{-k}^{n})
=\int_{S_{-k}}I_\mu (\alpha|\alpha_1^{n}\vee \hat\xi_{-k}^{n})d\mu(x)
+\int_{M\setminus S_{-k}}I_\mu (\alpha|\alpha_1^{n}\vee \hat\xi_{-k}^{n})d\mu(x).
\]

For $x\in S_{-k}$,
$(\alpha_1^{n}\vee \hat\xi_{-k}^{n})(x)\subset W^u_\loc(x)$.
Hence for almost every $x\in S_{-k}$, there exists $N=N(x)>0$ such that
for any $n\ge N$, $(\alpha_1^{n}\vee \hat\xi_{-k}^{n})(x)\subset \a(x)$
and therefore $\log \mu_x^{\alpha_1^{n}\vee \hat\xi_{-k}^{n}}(\a(x))=0$.
Lemma~\ref{Lphi1} with $\zeta_n=\alpha_1^{n}\vee \hat\xi_{-k}^{n}$
implies that Lebesgue dominated convergence theorem can be applied
to integration over $S_{-k}$.  So it follows
\[
\limsup_{n\to \infty}
\int_{S_{-k}}I_\mu (\alpha|\alpha_1^{n}\vee \hat\xi_{-k}^{n})d\mu(x) =0.
\]

For $x\in M\setminus S_{-k}$,
$(\alpha_1^{n}\vee \hat\xi_{-k}^{n})(x)\subset S_{-k}$.
We know that on
$(\alpha_1^{n}\vee \hat\xi_{-k}^{n})(x)$,
\[
\int_{\alpha_1^{n}\vee \hat\xi_{-k}^{n}(x)}
-\log \mu_x^{\alpha_1^{n}\vee \hat\xi_{-k}^{n}}(\a(y))d\mu_x^{\alpha_1^{n}\vee \hat\xi_{-k}^{n}}(y)
\le \log N_\a.
\]
It gives that
\[
\int_{M\setminus S_{-k}}
-\log \mu_x^{\alpha_1^{n}\vee \hat\xi_{-k}^{n}}(\a(x))d\mu(x)
\le \mu(M\setminus S_{-k}) \cdot \log N_\a
\le \e.
\]
So the result of the lemma follows.
\end{proof}

\begin{lemma}\label{Lconvergence}
Let $\mu$ be an ergodic measure.
Suppose $\eta\in \P^u$ that is subordinate to the unstable manifolds,
and $\hat\xi_{-k}$ is a partition described as above,
where $k\in {\mathbb N}\cup\{\infty\}$.
Then for almost every $x$, there is $N=N(x)>0$ such that for any $i>N$,
\[
(\hat\xi_{-k-i}\vee f^i\eta)(f^i(x))=(\hat\xi_{-k-i})(f^i(x)).
\]
Hence, for any partition $\b$ with
$H_\mu(\b|\hat\xi_{-k})<\infty$,
$$
I_\mu(\b|\hat\xi_{-k-i}\vee f^i\eta)(f^i(x))= I_\mu(\b|\hat\xi_{-k-i})(f^i(x))
$$
and therefore
$$
\lim_{i\to \infty}H_\mu(\b|\hat\xi_{-k-i}\vee f^i\eta)=H_\mu(\b|\xi).
$$

In particular, if we take $k=\infty$, then the last two
equalities become
$$
I_\mu(\b|\xi\vee f^i\eta)(f^i(x))= I_\mu(\b|\xi)(f^i(x))
$$
and 
$$
\lim_{i\to \infty}H_\mu(\b|\xi\vee f^i\eta)=H_\mu(\b|\xi).
$$
\end{lemma}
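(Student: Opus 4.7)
The plan is to handle the three assertions in sequence, using the geometric containment in the first as the main ingredient and deducing the remaining two as consequences.

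For the first assertion, I recast $(\hat\xi_{-k-i}\vee f^i\eta)(f^ix)=\hat\xi_{-k-i}(f^ix)$ as the geometric inclusion $\hat\xi_{-k-i}(f^ix)\subset f^i(\eta(x))$ and compare unstable diameters. Let $j_i(x):=\min\{j\ge 0:f^{i-j}x\in S\}$ be the backward return time to $S$. Since $\mu(S)>0$ and $\mu$ is ergodic, Birkhoff's theorem applied to $\mathbf{1}_S$ yields $j_i(x)/i\to 0$ for $\mu$-a.e.\ $x$. For such $x$ and all large $i$ (so that $j_i(x)\le k+i$),
\[
\hat\xi_{-k-i}(f^ix)\subset (f^{j_i(x)}\hat\xi)(f^ix)=f^{j_i(x)}W^{(u)}\big(\overline{f^{i-j_i(x)}x},\,r\big),
\]
an unstable plaque of $d^u$-diameter at most $Ce^{j_i(x)\lambda^*}r$, where $\lambda^*:=\max\log\|Df|_{E^u}\|$. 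Because $\eta$ is subordinate to unstable manifolds, $\delta(x):=d^u(x,\partial\eta(x))>0$ for $\mu$-a.e.\ $x$, and uniform expansion of $f$ along $E^u$ forces $f^i(\eta(x))$ to contain a $d^u$-ball about $f^ix$ of radius $\ge ce^{i\lambda_u}\delta(x)$, where $\lambda_u>0$ is the minimal unstable expansion rate. Since $j_i(x)\lambda^*-i\lambda_u\to-\infty$ for $\mu$-a.e.\ $x$, the plaque fits inside the ball for all $i>N(x)$, giving the inclusion. The $k=\infty$ case is handled identically with $\xi$ in place of $\hat\xi_{-k-i}$.

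The second assertion is immediate from the first: once $(\hat\xi_{-k-i}\vee f^i\eta)(f^ix)$ and $\hat\xi_{-k-i}(f^ix)$ agree as sets, the canonical conditional measures supported on this common element coincide, because for a measurable partition $\zeta$ the conditional measure on $\zeta(y)$ is determined up to $\mu$-null sets by that element and $\mu$. Hence $I_\mu(\beta|\hat\xi_{-k-i}\vee f^i\eta)(f^ix)=I_\mu(\beta|\hat\xi_{-k-i})(f^ix)$.

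For the third assertion, Lemma~\ref{Lcond1} together with $f$-invariance of $\mu$ gives
\[
0\le H_\mu(\beta|\hat\xi_{-k-i})-H_\mu(\beta|\hat\xi_{-k-i}\vee f^i\eta)\le H_\mu(f^i\eta|\hat\xi_{-k-i})=H_\mu\big(\eta\,\big|\,\textstyle\bigvee_{j=-i}^{k} f^j\hat\xi\big).
\]
The partitions $\bigvee_{j=-i}^k f^j\hat\xi$ increase in $i$ to $\bigvee_{j\le k} f^j\hat\xi$ (and, when $k=\infty$, to $\bigvee_{j\in\mathbb Z}f^j\hat\xi=\varepsilon$ by the generator property of $\xi$). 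Pulling back the inclusion from the first assertion by $f^{-i}$ and intersecting over $i$ yields $(\bigvee_{j\le k}f^j\hat\xi)(x)\subset\eta(x)$ for $\mu$-a.e.\ $x$, so $\eta$ is measurable with respect to this limiting partition; Lemma~\ref{Lcondzetan}(ii), extended to measurable partitions via martingale convergence, then forces $H_\mu(\eta|\bigvee_{j=-i}^k f^j\hat\xi)\to 0$. Combined with $H_\mu(\beta|\hat\xi_{-k-i})\to H_\mu(\beta|\xi)$ from Lemma~\ref{Lcondzetan}(ii) applied to $\hat\xi_{-k-i}\nearrow\xi$, this completes the proof. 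The main obstacle throughout is the diameter estimate in the first step: the element $\hat\xi_{-k-i}(f^ix)$ can only be controlled via the smallest return plaque $(f^{j_i(x)}\hat\xi)(f^ix)$, whose size grows like $e^{j_i(x)\lambda^*}$, and the argument relies essentially on the sublinear bound $j_i(x)=o(i)$ supplied by Birkhoff.
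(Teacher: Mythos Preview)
Your argument is correct but takes a more laborious route than the paper in both the first and third steps.

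For the first assertion, you track the \emph{most recent} return time $j_i(x)$ to $S$, use Birkhoff to get $j_i(x)=o(i)$, and then compare explicit expansion rates $\lambda^*,\lambda_u$ to force the plaque inside $f^i(\eta(x))$. The paper avoids all quantitative estimates: it fixes a \emph{single} $n_0$ with $f^{n_0}x\in S$ and $f^{-n_0}\big(\hat\xi(f^{n_0}x)\big)\subset B^u(x,r(x))\subset\eta(x)$ (possible since visits to $S$ are infinitely recurrent and pullbacks contract). Then for every $i\ge n_0$ the join $\hat\xi_{-k-i}$ contains the factor $f^{i-n_0}\hat\xi$, so $\hat\xi_{-k-i}(f^ix)\subset f^{i-n_0}\big(\hat\xi(f^{n_0}x)\big)$, and pulling back by $f^{-i}$ lands in $\eta(x)$. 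No rate comparison, no sublinear return-time bound. For the third assertion, you bound the defect by $H_\mu(f^i\eta\,|\,\hat\xi_{-k-i})=H_\mu\big(\eta\,\big|\,\bigvee_{j=-i}^{k}f^j\hat\xi\big)$ and appeal to martingale convergence; this is valid, but note that Lemma~\ref{Lcondzetan} as stated requires the outer partition to lie in $\P$, which $\eta$ does not, so the extension needs an explicit justification. The paper instead works directly with the pointwise identity from step two: setting $\phi_i=\big(I_\mu(\b|\hat\xi_{-k-i}\vee f^i\eta)-I_\mu(\b|\hat\xi_{-k-i})\big)\circ f^i$, one has $\phi_i\to 0$ a.e., and Fatou together with the trivial inequality $\int\phi_i\le 0$ squeezes $\int\phi_i\to 0$; combined with $H_\mu(\b|\hat\xi_{-k-i})\to H_\mu(\b|\xi)$ this finishes.
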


\begin{proof}
Since $\eta$ is subordinate to $W^u$, for $\mu$-a.e. $x$,
there is $r=r(x)>0$ such that $B^u(x,r)\subset \eta(x)$.
Since $\mu$ is ergodic, for $\mu$-a.e. $x$, there are infinite many
$n>0$ such that $f^n(x)\in S$.
Take $n_0=n_0(x)$ large enough, such that $f^{n_0}(x)\in S$
and $f^{-n_0}(\hat\xi(f^{n_0}(x)))\subset B^u(x, r)\subset \eta(x)$.
It follows that
$f^{-i}((f^{i-n_0}\hat\xi)(f^{i}(x)))\subset \eta(x)$ for any $i\ge n_0$.
Since $\hat\xi_{-k-i}=\vee_{j=0}^{k+i}f^j\hat\xi \ge f^{i-n_0}\hat\xi$,
$f^{-i}(\hat\xi_{-k-i}(f^{i}(x)))\subset \eta(x)$.
That is,
$\hat\xi_{-k-i}(f^{i}(x))\subset (f^{i}\eta)(f^i(x))$.
It implies that $(\hat\xi_{-k-i}\vee f^i\eta)(f^i(x))=(\hat\xi_{-k-i})(f^i(x))$
for all $i>N$.

By definition we can get directly
$I_\mu(\b|\hat\xi_{-k-i}\vee f^i\eta)(f^i(x))= I_\mu(\b|\hat\xi_{-k-i})(f^i(x))$.

Let $\phi_i=\big(I_\mu(\b|\hat\xi_{-k-i}\vee f^i\eta)
-I_\mu(\b|\hat\xi_{-k-i})\big)\circ f^i$.  The above fact gives
$\disp \lim_{i\to \infty} \phi_i(x)=0$ for almost every $x$.
By Fatou's lemma,
\[
\liminf_{i\to\infty}\int \phi_i d\mu
\ge \int \liminf_{i\to\infty}\phi_i d\mu
=0.
\]
It means
\[
\disp \liminf_{i\to\infty}H_\mu(\b|\hat\xi_{-k-i}\vee f^i\eta)
\ge \lim_{i\to\infty}H_\mu(\b|\hat\xi_{-k-i})=H_\mu(\b|\xi),
\]
where in the last step we use Lemma~\ref{Lcondzetan}(ii) for
$\zeta_i=\hat\xi_{-k-i}$ and $\zeta=\xi$.
Since $H_\mu(\b|\hat\xi_{-k-i}\vee f^i\eta)\le H_\mu(\b|\hat\xi_{-k-i})$
for any $i>0$, it follows
\[
\disp\limsup_{i\to \infty}H_\mu(\b|\hat\xi_{-k-i}\vee f^i\eta)
\le\lim_{i\to \infty} H_\mu(\b|\hat\xi_{-k-i}) =H_\mu(\b|\xi).
\]
Now we get $\disp\lim_{i\to \infty}H_\mu(\b|\hat\xi_{-k-i}\vee f^i\eta)
=H_\mu(\b|\xi)$.
\end{proof}

\subsection{Proof of Theorem A and its corollary}

Note that $h_\mu(f, \xi)$ can be written as
$\lim_{n\to\infty}\frac{1}{n}H_\mu(\xi_0^{n-1}|f\xi)$,
and $h_\mu(f, \a|\eta)=\lim_{n\to\infty}\frac{1}{n}H_\mu(\a_0^{n-1}|f\eta)$.
To prove Theorem~A we need to know that the difference
$H_\mu(\xi_0^{n-1}|f\xi)- H_\mu(\a_0^{n-1}|f\eta)$ increases
at most subexponentially.
It is natural to compare both $H_\mu(\xi_0^{n-1}|f\xi)$ and
$H_\mu(\a_0^{n-1}|f\eta)$ with $H_\mu(\xi_0^{n-1}|\eta)$.
Since the size of elements of $\xi$ can be arbitrarily small,
it is unknown whether $H_\mu(\xi|\eta)$ is finite and
whether $H_\mu(\xi_0^{n-1}|\a_0^{n-1})$ increases at most
subexponentially.
So we cannot obtain the result as easy as the same way we use
in the proof in Lemma~\ref{Lcond}.

\begin{proposition}\label{PropA1}
Suppose $\mu$ is an ergodic measure.
Then for any $\a\in \P$, $\eta\in \P^u$ subordinate to unstable manifolds,
and $\xi\in \Q^u$,
\[
h_\mu(f,\a|\eta)\le h_\mu(f, \xi)
\]
\end{proposition}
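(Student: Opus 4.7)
The plan is to bound $\limsup_{n\to\infty}\frac{1}{n}H_\mu(\a_0^{n-1}|\eta)$ from above by $h_\mu(f,\xi)$ by threading the estimate through two intermediate partitions: the ``past'' join $\hat\xi_{-k}=\bigvee_{j=0}^k f^j\hat\xi$ and its ``two-sided'' refinement $\hat\xi_{-k}^{n-1}=\bigvee_{j=-k}^{n-1}f^{-j}\hat\xi$. The key algebraic fact driving the proof is $\xi=\hat\xi\vee f\xi$, which forces $H_\mu(\hat\xi|f\xi)=H_\mu(\xi|f\xi)=h_\mu(f,\xi)$ and supplies the correct leading constant.

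First, Lemma~\ref{Lcond1}(iv) gives $H_\mu(\a_0^{n-1}|\eta)\le H_\mu(\hat\xi_{-k}|\eta)+H_\mu(\a_0^{n-1}|\hat\xi_{-k})$. Because both $\eta$ and $\hat\xi_{-k}$ are partitions into pieces of unstable leaves of bounded size, each $\eta$-atom meets only finitely many $\hat\xi_{-k}$-atoms, so $H_\mu(\hat\xi_{-k}|\eta)<\infty$ (compare Lemma~\ref{Lcond2}(i)) and contributes $o(n)$. Then Lemma~\ref{Lcond1}(ii) yields $H_\mu(\a_0^{n-1}|\hat\xi_{-k})\le H_\mu(\hat\xi_{-k}^{n-1}|\hat\xi_{-k})+H_\mu(\a_0^{n-1}|\hat\xi_{-k}^{n-1})$. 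For the first summand, using $\hat\xi_{-k}^{n-1}=\hat\xi_{-k}\vee\hat\xi_1^{n-1}$, the chain rule of Lemma~\ref{Linduction}(i), and $f$-invariance, I would rewrite it as $\sum_{j=1}^{n-1}H_\mu(\hat\xi|\bigvee_{m=1}^{k+j}f^m\hat\xi)$; each summand decreases in $k+j$ and, by Lemma~\ref{Lcondzetan}, converges to $H_\mu(\hat\xi|f\xi)=h_\mu(f,\xi)$. Choosing $k$ large enough that already the $j=1$ term is finite and at most $h_\mu(f,\xi)+\epsilon$ gives $\limsup_n\frac{1}{n}H_\mu(\hat\xi_{-k}^{n-1}|\hat\xi_{-k})\le h_\mu(f,\xi)+\epsilon$. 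For the second summand, Lemma~\ref{Linduction}(ii) expands it as a sum of terms $H_\mu(\a|\a_1^{n-1-i}\vee\hat\xi_{-k-i}^{n-1-i})$; since $\hat\xi_{-k-i}^{n-1-i}\ge\hat\xi_{-k}^{n-1-i}$, Lemma~\ref{Lcond1}(v) bounds each by $H_\mu(\a|\a_1^{n-1-i}\vee\hat\xi_{-k}^{n-1-i})$, to which Lemma~\ref{Lepsilonest} applies (after possibly enlarging $k$): the long-horizon summands are at most $\epsilon$, while the finitely many boundary and short-horizon terms are $\le\log N_\a$ and absorbed into $o(n)$, giving $\limsup_n\frac{1}{n}H_\mu(\a_0^{n-1}|\hat\xi_{-k}^{n-1})\le\epsilon$.

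Combining the three estimates yields $\limsup_n\frac{1}{n}H_\mu(\a_0^{n-1}|\eta)\le h_\mu(f,\xi)+2\epsilon$, and letting $\epsilon\to 0$ proves the proposition. The main obstacle will be the Cesàro step for $\frac{1}{n}H_\mu(\hat\xi_{-k}^{n-1}|\hat\xi_{-k})$: the terms $H_\mu(\hat\xi|\bigvee_{m=1}^{k+j}f^m\hat\xi)$ need not be uniformly finite across all $j$ at a fixed small $k$, since $\hat\xi$ itself can carry infinite entropy. The remedy is to first enlarge $k$ so that the $j=1$ term already lies within $\epsilon$ of the finite limit $h_\mu(f,\xi)$, exploiting the monotone convergence of the entire decreasing sequence; the Cesàro average is then dominated termwise by $h_\mu(f,\xi)+\epsilon$.
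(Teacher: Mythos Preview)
Your overall strategy—bounding $H_\mu(\a_0^{n-1}|\eta)$ through $\hat\xi_{-k}$ and then $\hat\xi_{-k}^{n-1}$—is close to the paper's, and your treatment of the term $H_\mu(\a_0^{n-1}|\hat\xi_{-k}^{n-1})$ via Lemma~\ref{Linduction}(ii) and Lemma~\ref{Lepsilonest} matches the paper's argument. The gap is in the other term. After your first inequality you condition on $\hat\xi_{-k}$, discarding $\eta$, and then expand $H_\mu(\hat\xi_{-k}^{n-1}|\hat\xi_{-k})=\sum_{j=1}^{n-1}H_\mu(\hat\xi\mid\hat\xi_{-(k+j)}^{-1})$, claiming these summands converge to $H_\mu(\hat\xi|f\xi)$ by Lemma~\ref{Lcondzetan}. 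But that lemma is stated for $\a\in\P$, and $\hat\xi$ is \emph{not} a finite partition: on $S$ it is the partition into local unstable plaques. Concretely, the conditioning partition $\hat\xi_{-N}^{-1}=\bigvee_{m=1}^N f^m\hat\xi$ carries a single ``big'' atom $M\setminus\bigcup_{m=1}^N f^mS$ of positive measure; restricted to that atom, $\hat\xi$ is still a leaf partition on $S\cap\bigl(M\setminus\bigcup_{m=1}^N f^mS\bigr)$, and whenever the transverse conditional measure is non-atomic (as for any SRB-type $\mu$) its entropy is $+\infty$. Thus each summand—and hence $H_\mu(\hat\xi_{-k}^{n-1}|\hat\xi_{-k})$—can be $+\infty$ for every finite $k$. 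Your proposed remedy of enlarging $k$ does not help: the decreasing sequence $H_\mu(\hat\xi|\hat\xi_{-N}^{-1})$ can be identically $+\infty$, so there is no finite $j=1$ term to anchor the Ces\`aro average.

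The paper avoids this by never dropping $\eta$: it bounds $H_\mu(\a_0^{n-1}|\eta)\le H_\mu(\hat\xi_{-k}^{n-1}|\eta)+H_\mu(\a_0^{n-1}|\hat\xi_{-k}^{n-1})$ and then estimates $H_\mu(\hat\xi_{-k}^{n-1}|\eta)$ directly, using Lemma~\ref{Linduction}(i) to write it as $H_\mu(\hat\xi_{-k}|\eta)+\sum_{i=1}^{n-1}H_\mu(\hat\xi_{-k}\mid f\hat\xi_{-k-i+1}\vee f^i\eta)$. Because $f^i\eta\in\P^u$, every atom in the conditioning is a bounded piece of an unstable leaf, so each summand is finite. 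Then Lemma~\ref{Lconvergence}—not Lemma~\ref{Lcondzetan}—shows the summands converge to $H_\mu(\hat\xi_{-k}|f\xi)\le H_\mu(\xi|f\xi)$. The essential difference is that the paper keeps a partition from $\P^u$ in the conditioning throughout, while your route conditions on $\hat\xi_{-k}$ alone, which carries a non-leaf atom where the leaf partition $\hat\xi$ has infinite conditional entropy.
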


\begin{proof}
Applying Lemma~\ref{Linduction}(i) with $\c=\eta$, $\b=\hat\xi_{-k}$,
and using the fact $\b_0^{n-1}=\hat\xi_{-k}^{n-1}$ and
$f^i\b_0^{i-1}=f\hat\xi_{-k-i+1}$
we have that for any $\eta\in \P^u$, $n>0$,
\[
\frac{1}{n}H_\mu(\hat\xi_{-k}^{n-1}|\eta)
=\frac{1}{n}H_\mu(\hat\xi_{-k}|\eta)
+\frac{1}{n}\sum_{i=1}^{n-1}H_\mu(\hat\xi_{-k}|f\hat\xi_{-k-i+1}\vee f^i\eta).
\]
Applying Lemma~\ref{Lconvergence} with $\b=\hat\xi_{-k}$,
we get that the terms in the summation converge to
$H_\mu(\hat\xi_{-k}|f\xi)$ as $i\to\infty$.
It is easy to see by the construction of $\hat\xi_{-k}$,
each element of $\eta$ intersects at most $2^k$ elements of $\hat\xi_{-k}$.
Hence $H_\mu(\hat\xi_{-k}|\eta)\le 2^k$ and
$\disp\frac{1}{n}H_\mu(\hat\xi_{-k}|\eta)\to 0$.   We get
\begin{equation}\label{fequiv1}
\lim_{n\to \infty}\frac{1}{n}H_\mu(\hat\xi_{-k}^{n-1}|\eta)
=H_\mu(\hat\xi_{-k}|f\xi)
\le H_\mu(\xi|f\xi).
\end{equation}

On the other hand, taking $\c=\hat\xi_{-k}^{n-1}$ in Lemma~\ref{Linduction}(ii)
we have
\begin{equation*}
\begin{split}
H_\mu(\alpha_0^{n-1}|\hat\xi_{-k}^{n-1})
=&H_\mu(\alpha|\hat\xi_{-n-k+1})
   +\sum_{i=0}^{n-2}H_\mu(\alpha|\alpha_1^{n-1-i}\vee \hat\xi_{-k-i}^{n-i-1})\\
=&H_\mu(\alpha|\hat\xi_{-n-k+1})
   +\sum_{i=1}^{n-1}H_\mu(\alpha|\alpha_1^{i}\vee \hat\xi_{-k-n+1+i}^{i}) \\
\le &H_\mu(\alpha)
   +\sum_{i=1}^{n-1}H_\mu(\alpha|\alpha_1^{i}\vee \hat\xi_{-k}^{i}),
\end{split}
\end{equation*}
where we use the fact that $f^i\hat\xi_{-k}^{n-1}=\xi_{-k-i}^{n-i-1}$ and therefore
$f^{n-1}\hat\xi_{-k}^{n-1}=\hat\xi_{-n-k+1}$.
For any $\e>0$ we take $k>0$ as in Lemma~\ref{Lepsilonest}.
By the lemma we know that
$\limsup_{n\to \infty}H_\mu(\alpha|\alpha_1^{n}\vee \hat\xi_{-k}^{n-1})
\le \limsup_{n\to \infty}H_\mu(\alpha|\hat\xi_{-k}^{n-1}) \le \e$.
Since $H_\mu(\alpha)<\infty$, we get
\begin{equation}\label{fequiv2}
\limsup_{n\to \infty}\frac{1}{n}H_\mu(\alpha_0^{n-1}|\hat\xi_{-k}^{n-1}) \le \e.
\end{equation}

By Lemma~\ref{Lcond1}
\begin{equation}\label{fequiv4}
H_\mu(\alpha_0^{n-1}|\eta)
    \le H_\mu(\hat\xi_{-k}^{n-1}|\eta)+H_\mu(\alpha_0^{n-1}|\hat\xi_{-k}^{n-1}).
\end{equation}
By \eqref{fequiv2}, \eqref{fequiv4}, and then by \eqref{fequiv1},
we get
\begin{equation*}
\begin{split}
&h_\mu(f,\a|\eta)
=\limsup_{n\to\infty}\frac{1}{n}H_\mu(\alpha_0^{n-1}|\eta) \\
\le&\limsup_{n\to\infty}\frac{1}{n}H_\mu(\hat\xi_{-k}^{n-1}|\eta)+\e
=H_\mu(\xi|f\xi)+\e
=h_\mu(f, \xi)+\e.
\end{split}
\end{equation*}
Since $\e$ is arbitrary, we obtain the result of the proposition.
\end{proof}

\begin{proposition}\label{PropA2}
Suppose $\mu$ is an ergodic measure.
Then for any $\eta\in \P^u$ subordinate to unstable manifolds,
and $\xi\in \Q^u$,
\[
h_\mu(f, \xi)
\le \sup_{\a\in \P}h_\mu(f, \alpha|\eta).
\]
\end{proposition}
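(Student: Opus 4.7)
The plan parallels Proposition~\ref{PropA1} but bounds things from the opposite side. Given $\e>0$, apply Lemma~\ref{Lcondzetan}(ii) to the increasing sequence $\hat\xi_{-k}\nearrow\xi$ to choose $k$ large enough that $H_\mu(\hat\xi_{-k}|f\xi)>h_\mu(f,\xi)-\e$. The computation in the proof of Proposition~\ref{PropA1} (up through its equation \eqref{fequiv1}) already establishes
\[
\lim_{n\to\infty}\frac{1}{n}H_\mu(\hat\xi_{-k}^{n-1}|\eta)=H_\mu(\hat\xi_{-k}|f\xi),
\]
and I will reuse this convergence.

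Combining parts (i) and (ii) of Lemma~\ref{Lcond1} (apply (i) to $\hat\xi_{-k}^{n-1}\le\hat\xi_{-k}^{n-1}\vee\alpha_0^{n-1}$, then expand with (ii)) yields
\[
H_\mu(\hat\xi_{-k}^{n-1}|\eta)\le H_\mu(\alpha_0^{n-1}|\eta)+H_\mu(\hat\xi_{-k}^{n-1}|\alpha_0^{n-1}\vee\eta).
\]
The plan is then to show that for any $\e'>0$ one can choose $\alpha\in\P$ of small enough diameter with
\[
\limsup_{n\to\infty}\frac{1}{n}H_\mu(\hat\xi_{-k}^{n-1}|\alpha_0^{n-1}\vee\eta)\le\e'.
\]
Combining these three estimates, passing to $\limsup$, and letting $\e,\e'\to 0$ gives $h_\mu(f,\xi)\le h_\mu(f,\alpha|\eta)\le\sup_{\alpha\in\P}h_\mu(f,\alpha|\eta)$, as required.

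For the error estimate I would apply Lemma~\ref{Linduction}(ii) with $\alpha$ replaced by $\hat\xi_{-k}$ and $\c=\alpha_0^{n-1}\vee\eta$, producing the decomposition
\[
H_\mu(\hat\xi_{-k}^{n-1}|\alpha_0^{n-1}\vee\eta)=H_\mu(\hat\xi_{-k}|f^{n-1}(\alpha_0^{n-1}\vee\eta))+\sum_{i=0}^{n-2}H_\mu(\hat\xi_{-k}|(\hat\xi_{-k})_1^{n-1-i}\vee f^i(\alpha_0^{n-1}\vee\eta)).
\]
The first term and any ``boundary'' summands where $i$ or $n-1-i$ stays bounded are uniformly $O(1)$ by the cardinality bound that $\hat\xi_{-k}$ restricted to any single unstable leaf has at most $2^{k+1}$ atoms (the bound used in Proposition~\ref{PropA1}); these terms are absorbed once we divide by $n$. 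For the ``interior'' summands I would prove a dual analog of Lemma~\ref{Lepsilonest}: the conditioning $(\hat\xi_{-k})_1^{n-1-i}\vee f^i\eta$ is subordinate to unstable manifolds (since $f^i\eta$ is) and refines along them to scale of order $\lambda^{-(n-1-i)}r$, where $\lambda$ is the unstable expansion constant. Hence on the ergodic good set $\bigcup_{j\ge 0}f^jS$, for $\mu$-a.e.~$x$ the refined atom eventually sits inside the single $\hat\xi_{-k}$-atom through $x$, while on the complement (where $\hat\xi_{-k}$ has the single atom $M\setminus\bigcup_{j\ge 0}f^jS$) the same containment holds trivially. A dominated-convergence argument in the spirit of Lemma~\ref{Lphi1} then upgrades $I_\mu(\hat\xi_{-k}|\cdot)\to 0$ pointwise to $H_\mu(\hat\xi_{-k}|\cdot)\to 0$ in the limit, and a standard Cesaro averaging yields $\frac{1}{n}\sum\to 0$.

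The main obstacle is verifying this dual of Lemma~\ref{Lepsilonest}, namely producing a uniform $L^1$-majorant for the conditional information functions $I_\mu(\hat\xi_{-k}|(\hat\xi_{-k})_1^m\vee f^{m'}(\alpha_0^m\vee\eta))$ as the two parameters $m,m'$ vary, and controlling the two-parameter Cesaro limit where the ``interior'' region of the index $i$ grows with $n$ while the boundary stays bounded. These verifications are routine within the framework of Section~\ref{SUME} (splitting the integral over $\bigcup_{j\ge 0}f^jS$ and its complement, and choosing $k$ large enough to make the complement's contribution small), so once the dual of Lemma~\ref{Lepsilonest} is in hand, the proof closes as outlined.
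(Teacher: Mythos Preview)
Your route is genuinely different from the paper's, and it carries a real gap.

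The paper does not go through the error term $H_\mu(\hat\xi_{-k}^{\,n-1}\mid\alpha_0^{n-1}\vee\eta)$ at all. Instead it first replaces $\xi$ by a refinement $\tilde\xi=\xi^{(1)}\vee\cdots\vee\xi^{(K)}\in\Q^u$ whose atoms all have diameter $<\e_0$ (so that finite $\alpha\in\P$ with $\alpha<f^{-1}\tilde\xi$ exist), and uses Lemma~3.1.2 of \cite{LY1} to see $h_\mu(f,\tilde\xi)=h_\mu(f,\xi)$. Then, for any $\alpha\in\P$ with $\alpha<f^{-1}\tilde\xi$, the increasing property gives $f^i\alpha_0^{i-1}\le\tilde\xi$, and Lemma~\ref{Linduction}(i) yields directly
\[
H_\mu(\alpha_0^{n-1}\mid\eta)\ \ge\ H_\mu(\alpha\mid\eta)+\sum_{i=1}^{n-1}H_\mu(\alpha\mid\tilde\xi\vee f^i\eta)\ \longrightarrow\ H_\mu(\alpha\mid\tilde\xi)
\]
in Ces\`aro mean (via Lemma~\ref{Lconvergence}). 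Letting $\alpha\nearrow f^{-1}\tilde\xi$ gives the bound. No ``dual of Lemma~\ref{Lepsilonest}'' is needed; the increasing property of $\xi$ does all the work.

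Your proposal, by contrast, hinges on showing $\frac{1}{n}H_\mu(\hat\xi_{-k}^{\,n-1}\mid\alpha_0^{n-1}\vee\eta)\to 0$, and the sketch for this is flawed. You write that on the complement of the ergodic good set $\bigcup_{j\ge 0}f^jS$, the partition $\hat\xi_{-k}$ has the single atom $M\setminus\bigcup_{j\ge 0}f^jS$. This is wrong: the ``big'' atom of $\hat\xi_{-k}$ is $M\setminus S_{-k}=M\setminus\bigcup_{j=0}^{k}f^jS$, which has \emph{positive} measure for finite $k$, and for $x\in M\setminus S_{-k}$ the refined conditioning atom (a small piece of $W^u(x)$) need not be contained in $M\setminus S_{-k}$ --- it may well meet $S_{-k}$. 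So the ``same containment holds trivially'' claim fails on a set of positive measure. Separately, the summands $H_\mu\bigl(\hat\xi_{-k}\,\big|\,(\hat\xi_{-k})_1^{n-1-i}\vee\alpha_{-i}^{\,n-1-i}\vee f^i\eta\bigr)$ depend on both $m=n-1-i$ and $i=n-1-m$, and you have not produced a bound depending only on $m$ that tends to $0$; without that, the Ces\`aro argument does not close.

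Your scheme is probably repairable: since every conditioning atom lies in a single unstable leaf (through $f^i\eta$), one has the uniform bound $I_\mu(\hat\xi_{-k}\mid\cdot)\le(k+1)\log 2$; and for $\mu$-a.e.\ $x$ (including those in $M\setminus S_{-k}$, away from the measure-zero boundary $\partial S_{-k}$) the conditioning atom, having $d^u$-diameter $\lesssim\lambda^{-m}\e_0$, is eventually inside $\hat\xi_{-k}(x)$ once $m$ alone is large. But these are exactly the points you must argue carefully, and the proposal as written does not. The paper's approach avoids all of this by exploiting $\alpha<f^{-1}\xi$.
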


\begin{proof}
Recall that $\xi\in \Q^u$ is constructed after the statement of
Lemma~\ref{Lpartition}.
We take finite number of points $z^{(1)}, \cdots, z^{(K)}\in M$ and
real numbers $r^{(1)}, \cdots, r^{(K)}\le \tilde e$ such that
$\{S_u(z^{(1)}, r^{(1)}), \cdots, S_u(z^{(K)}, r^{(K)})\}$ form a cover of $M$,
where $S_u(z^{(j)}, r^{(j)})$ are the sets with the form given by \eqref{fdefS}
for each $1\le j\le K$.
Construct $\xi^{(j)}$ as the same way we described,
and denote $\tilde\xi=\xi^{(1)}\vee\cdots\vee \xi^{(K)}$.
Clearly $\tilde\xi$ is also a partition satisfying Lemma~\ref{Lpartition},
and every element of $\xi$ has diameter smaller than $\e_0$
if $\tilde e$ is small enough.
It is in fact proved in Lemma~3.1.2 in \cite{LY1} that for any two such
partitions $\xi'$ and $\xi''$,
$h_\mu(f, \xi'\vee \xi'')=h_\mu(f, \xi')$.
By induction we can show that $h_\mu(f, \tilde\xi)=h_\mu(f, \xi^{(j)})$
for any $1\le j\le K$.  So we only need to prove the result
for $\tilde \xi$.  For the sake of notational simplicity,
we will drop the tilde and write $\xi$ instead.

Since $f^{-1}\xi$ is a measurable partition
of manifold $M$, there exists a sequence of partitions
$\alpha_n \in \P$ such that $\a_n\nearrow f^{-1}\xi$ as $n\to \infty$.
Hence, $\lim_{n \to \infty}H_\mu(\alpha_n|\xi)=H_\mu(f^{-1}\xi|\xi)$.
So we have
\[
\sup_{\alpha\in \P, \alpha < f^{-1}\xi}H_\mu(\alpha|\xi) = H_\mu(f^{-1}\xi|\xi).
\]
On the other hand, if $\a \in \P$ with $\alpha < f^{-1}\xi$,
then for any $i\geq 1$, $f^i\a_0^{i-1}< f^i(f^{-1}\xi)_0^{i-1}=\xi$.
By Lemma~\ref{Linduction}(i) with $\b=\a$, $\c=\xi$,
\begin{equation*}
H_\mu(\alpha_0^{n-1}|\eta)
=H_\mu(\alpha|\eta)+\sum_{i=1}^{n-1}H_\mu(\alpha|f^i\a_0^{i-1}\vee f^i\eta)
\ge H_\mu(\alpha|\eta)+\sum_{i=1}^{n-1}H_\mu(\alpha|\xi\vee f^i\eta)
\end{equation*}

By Lemma~\ref{Lconvergence} with $\b=\a$ we have
$\disp\lim_{i\to\infty}H_\mu(\alpha|\xi\vee f^i\eta)=H_\mu(\alpha|\xi)$.
Hence
\begin{equation*}
    \limsup_{n\to\infty}\frac{1}{n} H_\mu(\alpha_0^{n-1}|\eta)
\ge \liminf_{n\to\infty}\frac{1}{n} H_\mu(\alpha_0^{n-1}|\eta)
\ge H_\mu(\alpha|\xi).
\end{equation*}
So we get
\begin{equation}\label{fPropA2}
\begin{split}
\sup_{\alpha\in \P}h_\mu(f, \a|\eta)
\ge &\sup_{\alpha\in \P, \alpha < f^{-1}\xi}h_\mu(f, \a|\eta)
=\sup_{\alpha\in \P, \alpha < f^{-1}\xi}\limsup_{n\to\infty}
    \frac{1}{n} H_\mu(\alpha_0^{n-1}|\eta)                    \\
\ge&\sup_{\alpha\in \P, \alpha < f^{-1}\xi}\liminf_{n\to\infty}
    \frac{1}{n} H_\mu(\alpha_0^{n-1}|\eta)
\ge \sup_{\alpha\in \P, \alpha < f^{-1}\xi}H_\mu(\alpha|\xi)   \\
=&H_\mu(f^{-1}\xi|\xi)=h_\mu(f, \xi).
\end{split}
\end{equation}
This is what we need.
\end{proof}

\begin{proof}[Proof of Theorem A]
Proposition~\ref{PropA1} and Proposition~\ref{PropA2} gives that
for any $\a\in \P$, $\eta\in \P^u$ subordinate to unstable manifolds,
and $\xi\in \Q^u$,
\[
h_\mu(f, \a|\eta)
\le h_\mu(f, \xi)
\le \sup_{\b\in \P}h_\mu(f, \b|\eta).
\]
By Lemma~\ref{Lcond}, $\sup_{\b\in \P}h_\mu(f, \b|\eta)=h_\mu(f, \a|\eta)$.
So the result follows.

By Lemma~\ref{Lcond}, $h_\mu(f, \a|\eta)$ is independent of choice of
$\eta$ as long as $\eta\in \P^u$.  So the result is true for
any $\eta\in \P^u$, not necessary subordinate to unstable manifolds.
\end{proof}

\begin{proof}[Proof of Corollary A.1]
Since
$H_\mu(\a_0^{n-1}|\eta)\le H_\mu(\a_0^{n-1})$ for any $\a\in \P$
and $\eta\in \P^u$, by definition we get $h_\mu^u(f)\le h_\mu(f)$.

If $f$ is $C^{r}$ with $r>1$, then Ledrappier-Young's formula
can be applied, that is,
\[
h_\mu(f)=\sum_{i\le \tilde u}\lambda_i \gamma_i,
\]
where $\lambda_1> \cdots > \lambda_{\tilde u}>0$ are the positive Lyapunov
exponents, $0\le \gamma_i\le \dim E_i$, and $E_i$ are the subspaces
whose nonzero vectors have Lyapunov exponents $\lambda_i$.
If there are $u$ distinct Lyapunov exponents on the unstable subspace,
then $u\le \tilde u$.  Ledrappier-Young's formula also gives
$H_{\mu}(\xi|f\xi)=\sum_{i\le u}\lambda_i \gamma_i$, where $\xi\in \Q^u$.
Since by Theorem~A, $h_\mu^u(f)=H_{\mu}(\xi|f\xi)$,
we get the inequalities.

If there is no positive Lyapunov exponent in the center direction, then
$\tilde u=u$, hence we can take $\xi_{\tilde u}=\xi_{u}$ to get
$h_\mu^u(f)=h_\mu(f, \xi_{u})=h_\mu(f, \xi_{\tilde u})=h_\mu(f)$.
\end{proof}

\begin{proof}[Proof of Corollary A.2]
The equality $h_\mu^u(f)=h_\mu^u(f, \a|\eta)$ for any $\a\in \P$
and $\eta\in \P^u$ is implied in Lemma~\ref{Lcond},
as well as in the two equalities given in Theorem A.
So we only need to prove that the limit
$\lim_{n\to \infty}\frac{1}{n}H_\mu(\alpha_0^{n-1}|\eta)$ exists.

First, by Theorem A, all ``$\ge$'' in \eqref{fPropA2} becomes ``$=$''.
Also, by Lemma~\ref{Lcond}, all the supremum in \eqref{fPropA2}
can be dropped.  So \eqref{fPropA2} becomes
\begin{equation*}
h_\mu(f, \a|\eta)
= \limsup_{n\to\infty}    \frac{1}{n} H(\alpha_0^{n-1}|\eta)
=\liminf_{n\to\infty}     \frac{1}{n} H(\alpha_0^{n-1}|\eta)
= h_\mu(f, \xi).
\end{equation*}
We get existence of $\lim_{n\to\infty} \frac{1}{n} H(\alpha_0^{n-1}|\eta)$.
\end{proof}

\subsection{Further Properties}

In this subsection we show that the unstable metric entropy is
affine and upper semi-continuous with respect to measures.

Recall that $\mathcal{M}_f(M)$ and $\mathcal{M}^e_f(M)$ denote the set
of all $f$-invariant and ergodic probability measures on $M$ respectively.
Let $\mathcal{M}(M)$ denote the set of all probability measures on $M$.

Note that any partition $\c$ generates a sub-$\sigma$-algebra
${\mathcal B}(\c)$, that is, ${\mathcal B}(\c)$ is the smallest
sub-$\sigma$-algebra that contains the elements in the partition $\c$.
Clearly, if $\{\c_n\}$ is a sequence of increasing measurable partitions,
then  $\{{\mathcal B}(\c_n)\}$ is a sequence of increasing
sub-$\sigma$-algebras.

\begin{proposition}\label{Paffine}
For any $\a\in \P$ and $\eta\in \P^u$, the map
$\mu \mapsto H_\mu(\alpha|\eta)$
from $\mathcal{M}(M)$ to ${\mathbb R}^+\cup\{0\}$ is concave.

Furthermore, the map $\mu \mapsto h_\mu^u(f)$ from $\mathcal{M}_f(M)$ to
${\mathbb R}^+\cup\{0\}$ is affine.
\end{proposition}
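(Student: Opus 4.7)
The plan has two stages. First I would prove concavity of $\mu\mapsto H_\mu(\alpha|\eta)$ by establishing the two-sided inequality
\begin{equation*}
p H_{\mu_1}(\alpha|\eta) + (1-p) H_{\mu_2}(\alpha|\eta) \le H_{p\mu_1+(1-p)\mu_2}(\alpha|\eta) \le p H_{\mu_1}(\alpha|\eta) + (1-p) H_{\mu_2}(\alpha|\eta) + \log 2
\end{equation*}
for all $\mu_1,\mu_2\in\mathcal{M}(M)$ and $p\in[0,1]$; the lower half is exactly the required concavity. Second I would apply the same bound with $\alpha$ replaced by $\alpha_0^{n-1}$, divide by $n$, and let $n\to\infty$ to deduce affineness of $h_\mu^u$.

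To prove the two-sided bound, let $\phi:M\to M/\eta$ be the quotient map associated with the measurable partition $\eta$ and write $\hat\mu=\mu\circ\phi^{-1}$, $\hat\mu_i=\mu_i\circ\phi^{-1}$. Since $\hat\mu_i\ll\hat\mu$, uniqueness of the Rokhlin disintegration forces
\begin{equation*}
\mu_x^\eta = s(x)\mu_{1,x}^\eta + (1-s(x))\mu_{2,x}^\eta, \qquad s(x)=p\frac{d\hat\mu_1}{d\hat\mu}(\phi(x))\in[0,1],
\end{equation*}
with $s$ constant on each $\eta$-atom. Applying the classical Shannon-entropy inequalities
\begin{equation*}
s H(q)+(1-s)H(r) \le H(sq+(1-s)r) \le s H(q)+(1-s)H(r)+H(s,1-s)
\end{equation*}
pointwise with $q_i=\mu_{1,x}^\eta(A_i)$, $r_i=\mu_{2,x}^\eta(A_i)$ for $\alpha=\{A_i\}$, noting that $H_{\mu_{i,x}^\eta}(\alpha)$ is likewise constant on $\eta$-atoms, and integrating against $d\mu$ using the identity $\int_E s\,d\mu=p\,\mu_1(E)$ for any $\eta$-saturated $E$ (so that $\int s\,H_{\mu_{1,x}^\eta}(\alpha)\,d\mu=p H_{\mu_1}(\alpha|\eta)$ and its $\mu_2$-analogue) together with $H(s,1-s)\le\log 2$, yields the displayed two-sided inequality.

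For affineness of $h_\mu^u$, fix $\mu_1,\mu_2\in\mathcal{M}_f(M)$ and apply the two-sided bound with $\alpha_0^{n-1}$ in place of $\alpha$, which is still a finite measurable partition of $M$. Dividing by $n$ and using $\limsup_n(a_n+b_n)\le\limsup_n a_n+\limsup_n b_n$, the upper half gives immediately
\begin{equation*}
h_\mu^u(f) = \limsup_n \frac{1}{n}H_\mu(\alpha_0^{n-1}|\eta) \le p\,h_{\mu_1}^u(f) + (1-p)\,h_{\mu_2}^u(f).
\end{equation*}
For the matching reverse inequality I would first assume $\mu_1,\mu_2$ are ergodic, so that Corollary~A.2 ensures that $\lim_n\frac{1}{n}H_{\mu_i}(\alpha_0^{n-1}|\eta)=h_{\mu_i}^u(f)$ actually exists; the lower half of the two-sided bound then delivers $p\,h_{\mu_1}^u(f)+(1-p)\,h_{\mu_2}^u(f)\le h_\mu^u(f)$, completing affineness on the ergodic simplex.

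The final step is to extend affineness from ergodic pairs to arbitrary pairs in $\mathcal{M}_f(M)$ via ergodic decomposition: writing $\mu=\int_{\mathcal{M}^e_f(M)}\nu\,d\tau_\mu(\nu)$, one needs the decomposition formula $h_\mu^u(f)=\int h_\nu^u(f)\,d\tau_\mu(\nu)$, whereupon affineness on all of $\mathcal{M}_f(M)$ reduces to linearity of the integral. I expect the main obstacle to be a rigorous proof of this decomposition: because the definition of $h_\mu^u$ uses $\limsup$ and $\eta$ is not $f$-invariant, the usual subadditive/generator machinery underlying the classical Kolmogorov--Sinai decomposition is unavailable. I would close this gap by integrating the Shannon--McMillan--Breiman convergence of Theorem~B against $d\tau_\mu$ and invoking a dominated-convergence argument with the $L^1$-bound from Lemma~\ref{Lphi1}.
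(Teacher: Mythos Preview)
Your concavity argument via direct disintegration is correct and is a cleaner alternative to the paper's route, which instead approximates $\eta$ by an increasing sequence of finite partitions $\beta_n\nearrow\eta$, invokes the known finite-partition bound
\[
0\le H_\mu(\alpha|\beta_n)-aH_{\mu_1}(\alpha|\beta_n)-(1-a)H_{\mu_2}(\alpha|\beta_n)\le\phi(a)+\phi(1-a),
\]
and passes to the limit via Lemma~\ref{Lcondzetan}. Both methods produce the same two-sided estimate (your $\log 2$ versus the paper's $\phi(a)+\phi(1-a)$), and either suffices.

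For affineness, the paper proceeds exactly as you do up to the two-sided bound for $\alpha_0^{n-1}$, then simply divides by $n$, takes the limit, and cites Corollary~A.2. You are right to worry that Corollary~A.2 is only proved for ergodic measures, but the repair is far simpler than your ergodic-decomposition detour: one only needs $\lim_n\frac{1}{n}H_\nu(\alpha_0^{n-1}|\eta)$ to exist for \emph{every} invariant $\nu$, and this follows from the subadditivity established in the proof of Proposition~\ref{Pusc} (the sequence $n\mapsto H_\nu(\alpha_0^{n-1}|\eta)+H_\nu(\eta^{\tilde{u}})$ is subadditive, so Fekete's lemma applies). Once the three limits exist, the squeeze from your two-sided bound gives affineness immediately, with no need to treat ergodic pairs first.

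Your proposed route through Theorem~B has a genuine gap. The Shannon--McMillan--Breiman convergence $\frac{1}{n}I_\nu(\alpha_0^{n-1}|\eta)\to h_\nu^u(f)$ is computed with the conditional measures $\nu_x^\eta$ of each ergodic component $\nu$, not with $\mu_x^\eta$; integrating $\frac{1}{n}H_\nu(\alpha_0^{n-1}|\eta)$ over $d\tau_\mu(\nu)$ therefore does not recover $\frac{1}{n}H_\mu(\alpha_0^{n-1}|\eta)$. Concavity gives one inequality, but the $\log 2$ upper bound for two-point mixtures does not extend to a continuous decomposition with a uniform constant, and Lemma~\ref{Lphi1} bounds $\sup_n I_\mu(\alpha|\zeta_n)$ only for a fixed $\mu$, not uniformly across ergodic components. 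In fact the decomposition formula $h_\mu^u(f)=\int h_\nu^u(f)\,d\tau_\mu(\nu)$ is obtained in the paper as a \emph{consequence} of affineness and upper semi-continuity (see the proof of Proposition~\ref{variationalprinciple1}), so it is not available as an independent input here.
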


\begin{proof}
For $\mu=a\mu_1+(1-a)\mu_2$ where
$\mu_1, \mu_2 \in \mathcal{M}(M)$ and $0<a<1$,
and for any $\alpha, \beta \in \P$, it is well known that (cf. Lemma 3.3 in \cite{HYZ})
\begin{equation*}\label{faffine}
0\leq H_\mu(\alpha|\beta)- aH_{\mu_1}(\alpha|\beta)-(1-a)H_{\mu_2}(\alpha|\beta)\leq \phi(a)+\phi(1-a)
\end{equation*}
where $\phi(x)=-x\log x$. For $\eta\in P^u$, we can find a sequence of partitions $\beta_n \in \P$ such that $\beta_1<\beta_2< \cdots$.
Using Lemma~\ref{Lcondzetan} with $\zeta_n=\beta_n$ and
$\zeta=\eta$, we have
\begin{equation*}
\begin{aligned}
H_\mu(\alpha|\eta)&=\lim_{n\to \infty}H_{\mu}(\alpha|\beta_n)
\geq \lim_{n\to \infty} \left(aH_{\mu_1}(\alpha|\beta_n)
  +(1-a)H_{\mu_2}(\alpha|\beta_n)\right)\\
&=aH_{\mu_1}(\alpha|\eta)+(1-a)H_{\mu_2}(\alpha|\eta).
\end{aligned}
\end{equation*}
The first part of the proposition follows. Similarly, we have
\begin{equation*}
\begin{aligned}
H_\mu(\alpha|\eta)\leq aH_{\mu_1}(\alpha|\eta)+(1-a)H_{\mu_2}(\alpha|\eta)+\phi(a)+\phi(1-a).
\end{aligned}
\end{equation*}
Hence
\begin{equation*}
\begin{aligned}
&aH_{\mu_1}(\alpha_0^{n-1}|\eta)+(1-a)H_{\mu_2}(\alpha_0^{n-1}|\eta)\leq H_\mu(\alpha_0^{n-1}|\eta)\\
\leq &aH_{\mu_1}(\alpha_0^{n-1}|\eta)+(1-a)H_{\mu_2}(\alpha_0^{n-1}|\eta)+\phi(a)+\phi(1-a).
\end{aligned}
\end{equation*}
Dividing by $n$ and taking limit, we have $h_\mu(f,\alpha|\eta)=ah_{\mu_1}(f,\alpha|\eta)+(1-a)h_{\mu_2}(f,\alpha|\eta)$. Then the second part of the proposition follows by Corollary~A.2.
\end{proof}

Recall that for each partition $\a\in \P$, the partition $\zeta$
given by $\zeta(x)=\a(x)\cap W^u_\loc(x)$ for any $x\in M$
is an element in $\P^u$.  Denote such $\zeta$ by $\a^u$.
Conversely, for each partition $\eta\in \P^u$, there is a partition
$\b\in \P$ such that $\eta(x)=\b(x)\cap W^u_\loc(x)$ for any $x\in M$.
Denote such $\b$ by $\eta^{\tilde{u}}$.

\begin{proposition}\label{Pusc}
(a) Let $\nu\in \mathcal{M}(M)$.
For any $\a\in \P$ and $\eta\in \P^u$ with $\mu(\partial \a)=0$
and $\mu(\partial \eta^{\tilde{u}})=0$,
the map $\mu \mapsto H_\mu(\alpha|\eta)$ from $\mathcal{M}(M)$ to
${\mathbb R}^+\cup\{0\}$ is upper semi-continuous at $\mu$,
i.e.
$$
\limsup_{\nu\to \mu}H_{\nu}(\alpha|\eta)\leq H_\mu(\alpha|\eta).
$$

(b) The unstable entropy map $\mu \mapsto h_\mu^u(f)$ from
$\mathcal{M}_f(M)$ to ${\mathbb R}^+\cup\{0\}$ is
upper semi-continuous at $\mu$.
i.e.
$$
\limsup_{\nu\to \mu}h^u_{\nu}(f)\leq h_\mu^u(f).
$$
\end{proposition}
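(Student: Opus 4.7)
For Part (a), the plan is to reduce conditioning on the uncountable measurable partition $\eta \in \P^u$ to a limit of conditional entropies with respect to finite partitions, for which weak-$*$ continuity at $\mu$ is standard. Setting $\beta := \eta^{\tilde{u}} \in \P$ (which satisfies $\mu(\partial \beta) = 0$ by hypothesis), I would construct an increasing sequence of finite measurable partitions $\gamma_n \nearrow \eta$, in the sense that $\B(\gamma_n) \nearrow \B(\eta)$, with $\mu(\partial \gamma_n) = 0$. Concretely, $\gamma_n$ is obtained by refining $\beta$ with a grid of mesh $1/n$ transverse to $W^u$ inside local foliation charts, and a Fubini/Sard-type argument picks the grid positions so that $\mu(\partial \gamma_n) = 0$. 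Granted such $\gamma_n$, the argument closes cleanly: for each fixed $n$, $\nu \mapsto H_\nu(\alpha|\gamma_n)$ is weak-$*$ continuous at $\mu$ because $\nu(A \cap C)$ depends continuously on $\nu$ at $\mu$ for $A \in \alpha$, $C \in \gamma_n$ with $\mu$-null boundaries; since $\gamma_n \leq \eta$, Lemma~\ref{Lcond1}(v) gives $H_\nu(\alpha|\eta) \leq H_\nu(\alpha|\gamma_n)$; and by Lemma~\ref{Lcondzetan}(ii), $H_\mu(\alpha|\gamma_n) \to H_\mu(\alpha|\eta)$. Chaining these:
\[
\limsup_{\nu \to \mu} H_\nu(\alpha|\eta) \leq \limsup_{\nu \to \mu} H_\nu(\alpha|\gamma_n) = H_\mu(\alpha|\gamma_n) \to H_\mu(\alpha|\eta).
\]

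For Part (b), the plan is to fix $\alpha \in \P$ and $\eta \in \P^u$ with $\mu(\partial \alpha) = \mu(\partial \eta^{\tilde{u}}) = 0$ (achievable by perturbation in $\P$), and use Lemma~\ref{Lcond} to write $h^u_\nu(f) = h_\nu(f, \alpha|\eta) = \limsup_n \tfrac{1}{n} H_\nu(\alpha_0^{n-1}|\eta)$ for every invariant $\nu$. Since $\mu(\partial \alpha_0^{n-1}) = 0$ by $f$-invariance of $\mu$, Part~(a) applied with $\alpha_0^{n-1}$ in place of $\alpha$ yields, for every $n$, weak-$*$ u.s.c. of $\nu \mapsto \tfrac{1}{n} H_\nu(\alpha_0^{n-1}|\eta)$ at $\mu$. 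Reducing to ergodic $\mu$ (the general case follows by ergodic decomposition together with the affineness of $\mu \mapsto h^u_\mu(f)$ from Proposition~\ref{Paffine}), Corollary~\ref{CA2} upgrades the $\limsup$ defining $h^u_\mu(f)$ to a genuine limit, so given $\varepsilon > 0$ one picks $N$ with $\tfrac{1}{N} H_\mu(\alpha_0^{N-1}|\eta) \leq h^u_\mu(f) + \varepsilon$ and then invokes Part~(a) at $n = N$ to get $\tfrac{1}{N} H_\nu(\alpha_0^{N-1}|\eta) \leq h^u_\mu(f) + 2\varepsilon$ for $\nu$ near $\mu$.

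The main obstacle I anticipate is promoting this single-$N$ bound to the limsup $h^u_\nu(f) = \limsup_n \tfrac{1}{n} H_\nu(\alpha_0^{n-1}|\eta)$: in the classical setting this is immediate from strict subadditivity $H_\nu(\alpha_0^{n+m-1}) \leq H_\nu(\alpha_0^{n-1}) + H_\nu(\alpha_0^{m-1})$; here one only has the approximate version
\[
H_\nu(\alpha_0^{n+m-1}|\eta) \leq H_\nu(\alpha_0^{n-1}|\eta) + H_\nu(\alpha_0^{m-1}|\eta) + H_\nu(f^{-n}\eta|\eta),
\]
derived from Lemma~\ref{Linduction} and Lemma~\ref{Lcond1}(iv) together with $f$-invariance. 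I would iterate this along $n = qN$ to reach $H_\nu(\alpha_0^{qN-1}|\eta) \leq q H_\nu(\alpha_0^{N-1}|\eta) + (q-1) H_\nu(f^{-N}\eta|\eta)$, divide by $qN$, let $q \to \infty$, and then analyze the defect $\tfrac{1}{N} H_\nu(f^{-N}\eta|\eta)$ with a judicious choice of $N$ to absorb it into the $\varepsilon$-budget. This control of the defect is the delicate portion of Part~(b).
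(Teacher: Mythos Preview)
Your Part~(a) is essentially the paper's argument: approximate $\eta$ from below by finite partitions $\gamma_n$ with $\mu$-null boundaries, use weak-$*$ continuity of $H_\nu(\alpha|\gamma_n)$ at $\mu$, and pass to the limit via Lemma~\ref{Lcondzetan}.

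In Part~(b) you have the right skeleton but two issues. First, the reduction to ergodic $\mu$ via ergodic decomposition and affineness is not clearly valid: upper semi-continuity at all ergodic measures together with affineness does \emph{not} formally imply upper semi-continuity at every invariant measure, because the ergodic decomposition $\nu\mapsto\tau_\nu$ is not weak-$*$ continuous. This detour is also unnecessary once the defect is handled correctly.

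Second, and more importantly, your defect $H_\nu(f^{-n}\eta|\eta)$ is not a growing term to be absorbed by clever choice of $N$: it is \emph{uniformly bounded}. By $f$-invariance, $H_\nu(f^{-n}\eta|\eta)=H_\nu(\eta|f^n\eta)$. Each element of $f^n\eta$ lies inside a single unstable leaf, and on such a set the partitions $\eta$ and $\eta^{\tilde u}$ induce the same refinement (because $\eta(y)=\eta^{\tilde u}(y)\cap W^u_{\mathrm{loc}}(y)$ and $\eta^{\tilde u}(y)$ has diameter $\le\varepsilon_0$, so its intersection with the leaf already sits in $W^u_{\mathrm{loc}}(y)$). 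Hence
\[
H_\nu(f^{-n}\eta|\eta)=H_\nu(\eta|f^n\eta)=H_\nu(\eta^{\tilde u}|f^n\eta)\le H_\nu(\eta^{\tilde u})\le \log|\eta^{\tilde u}|,
\]
a bound independent of $n$ and of $\nu$. This is exactly the paper's key observation. With it, your approximate subadditivity becomes genuine subadditivity of $n\mapsto H_\nu(\alpha_0^{n-1}|\eta)+H_\nu(\eta^{\tilde u})$, so the limit equals the infimum for \emph{every} invariant $\nu$ (no ergodicity needed), and
\[
h_\nu^u(f)\le \frac{H_\nu(\alpha_0^{N-1}|\eta)+H_\nu(\eta^{\tilde u})}{N}\quad\text{for all }N.
\]
Now pick $N$ so the right side at $\nu=\mu$ is within $\varepsilon$ of $h_\mu^u(f)$, and apply Part~(a) with $\alpha_0^{N-1}$ (plus continuity of $\nu\mapsto H_\nu(\eta^{\tilde u})$ at $\mu$) to finish.
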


\begin{proof}
(a) Since $\mu(\partial \eta^{\tilde{u}})=0$,
we can take a sequence of partitions
$\{\beta_n\} \subset \mathcal{P}$ such that $\beta_1<\beta_2< \cdots$
and $\mathcal{B}(\beta_n)\nearrow\mathcal{B}(\eta)$,
and moreover, $\mu(\partial \beta_n)=0$ for $n=1,2, \cdots$.

Since $\mu(\partial \alpha)=0=\mu(\partial \beta_n)$, and
for any invariant measure $\nu$,
\[
H_\nu(\alpha|\beta_n)
=-\sum_{A_i\in \alpha,B_j\in \beta_n}\nu(A_i \cap B_j)
 \log \frac{\nu(A_i \cap B_j)}{\nu(B_j)},
\]
we have $\lim_{\nu\to \mu}H_{\nu}(\alpha|\beta_n)= H_\mu(\alpha|\beta_n)$
for any $n\in \mathbb{N}$.
By martingale convergence theorem,
$H_\nu(\alpha|\eta)=\lim_{n\to \infty}H_\nu(\alpha|\beta_n)$.
So for any $\epsilon>0$, there exists $N\in \mathbb{N}$ such that
$H_\mu(\alpha|\beta_N)\leq H_\mu(\alpha|\eta)+\epsilon.$
One has
$$
\limsup_{\nu\to \mu}H_{\nu}(\alpha|\eta)
\leq \limsup_{\nu\to \mu}H_{\nu}(\alpha|\beta_N)
=H_\mu(\alpha|\beta_N)
\leq H_\mu(\alpha|\eta)+\epsilon.
$$
Since $\epsilon>0$ is arbitrary, we get the inequality.

(b) To get the upper semi-continuity for unstable entropy map
$\mu \mapsto h_\mu^u(f)$, we take $\a\in \P$ and $\eta\in \P^u$
with $\mu(\partial \a)=0$ and $\mu(\partial \eta^{\tilde{u}})=0$.

By Lemma~\ref{Lcond1}, for any $f$-invariant measure $\nu$, we have
\begin{equation}\label{fsubadd1}
\begin{aligned}
H_\nu(\alpha_0^{m+n-1}|\eta)
=&H_\nu(\alpha_0^{n-1}|\eta)+H_\nu(f^{-n}\alpha_0^{m-1}|\alpha_0^{n-1}\vee\eta)\\
=&H_\nu(\alpha_0^{n-1}|\eta)+H_\nu(\alpha_0^{m-1}|\alpha_{-n}^{-1}\vee f^n\eta)\\
\le &H_\nu(\alpha_0^{n-1}|\eta)+H_\nu(\alpha_0^{m-1}|\eta)
   +H_\nu(\eta|\alpha_{-n}^{-1}\vee f^n\eta).
\end{aligned}
\end{equation}

Note that for any $\zeta\in \P^u$, $x\in M$,
$\eta^{\tilde{u}}(x)\cap \zeta(x) = \eta(x)\cap \zeta(x)$. The definition
of conditional entropy gives
\[
H_\nu(\eta|\alpha_{-n}^{-1}\vee f^n\eta)
=H_\nu(\eta^{\tilde{u}}|\alpha_{-n}^{-1}\vee f^n\eta)
\le H_\nu(\eta^{\tilde{u}}).
\]
So by \eqref{fsubadd1},
\[
    H_\nu(\alpha_0^{m+n-1}|\eta)
\le H_\nu(\alpha_0^{n-1}|\eta)+H_\nu(\alpha_0^{m-1}|\eta)
   +H_\nu(\eta^{\tilde{u}}).
\]
That is, $\{H_\nu(\alpha_0^{n-1}|\eta)+H_\nu(\eta^{\tilde{u}})\}$ is
a subadditive sequence.  Hence we have
\begin{equation}\label{fsubadd2}
\lim_{n\to \infty}\frac{1}{n}
          \big(H_\nu(\alpha_0^{n-1}|\eta)+H_\nu(\eta^{\tilde{u}})\big)
=\inf_{n\in \mathbb{N}} \frac{H_\nu(\alpha_0^{n-1}|\eta)+H_\nu(\eta^{\tilde{u}})}{n}.
\end{equation}

Let $\e>0$ be arbitrary.  By Corollary~A.2, we can take $N\in \mathbb{N}$
large enough such that
\[
\disp\frac{H_\mu(\alpha_0^{N-1}|\eta)+H_\mu(\eta^{\tilde{u}})}{N}
\leq  h_\mu(f,\alpha|\eta)+\e= h_\mu^u(f)+\e.
\]
Since $\mu(\partial \a)=0$, $\mu(\partial \a_0^{n-1})=0$ for any $n\ge 1$
by invariance of measure $\mu$.  So we can use Corollary~A.2 and
\eqref{fsubadd2}, and then apply the conclusion
in part~(a) with $\a$ replaced by $\a_0^{n-1}$ to get
\begin{equation*}
\begin{aligned}
&\limsup_{\nu\to \mu}h_\nu^u(f)
=\limsup_{\nu\to \mu}h_\nu^u(f,\alpha|\eta)
=\limsup_{\nu\to \mu}\inf_{n\in \mathbb{N}}
  \frac{H_\nu(\alpha_0^{n-1}|\eta)+H_\nu(\eta^{\tilde{u}})}{n} \\
\leq &\limsup_{\nu\to \mu}\frac{H_\nu(\alpha_0^{N-1}|\eta)+H_\nu(\eta^{\tilde{u}})}{N}
\leq \frac{H_\mu(\alpha_0^{N-1}|\eta)+H_\mu(\eta^{\tilde{u}})}{N}
\leq h_\mu^u(f)+\e.
\end{aligned}
\end{equation*}
Since $\e>0$ is arbitrary, we get the result.
\end{proof}

\section{Shannon-McMillan-Breiman Theorem}\label{SSMBThm}
\setcounter{equation}{0}
In this section, we prove Theorem B,
a version of Shannon-McMillan-Breiman theorem for unstable metric entropy.
Throughout the section we assume that $\mu$ is an ergodic measure of $f$
since we have such an assumption in the theorem.

\subsection{Proof of Theorem B: Lower Limits}

Let $d^u$ denote the metric induced by the Riemmanian structure
on unstable manifolds.
Let $B^u(y,r)$ denote the open ball centered at $y$ with radius $r>0$
in the unstable manifold $W^u(y)$ with respect to $d^u$.

Then let $d^u_{n}(x,y)=\max _{0 \leq j \leq n-1}d^u(f^j(x),f^j(y))$,
and $B^u_{n}(x,\epsilon)$ be the open ball centered at $x$ of radius $\e$
with respect to the metric $d^u_n$, i.e. an
\emph{$(n,\epsilon)$ Bowen ball in $W^u(x)$ about $x$}.

Recall for any $\xi\in \Q^u$, the entropy of $\xi$ is given by
$h_\mu(f, \xi)=H_\mu(f^{-1}\xi|\xi)$.

\begin{lemma}\label{LSMB1.1}
For any $\xi\in \Q^u$, $\e>0$,
\begin{equation*}
h_\mu(f, \xi)
=\lim_{n\to \infty}-\frac{1}{n}\log\mu_x^{\xi}(B^u_{n}(x,\epsilon)), \quad
\mu\ae x.
\end{equation*}
\end{lemma}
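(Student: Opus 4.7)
The plan is to bracket $\mu_x^\xi(B^u_n(x,\epsilon))$ between $\mu_x^\xi(\xi_0^{m}(x))$ for values of $m$ comparable to $n-1$, and then apply Lemma~\ref{LSMB1}, which (for the increasing partition $\xi$) gives $-\frac{1}{n}\log\mu_x^\xi(\xi_0^{n-1}(x))\to h_\mu(f,\xi)$ $\mu$-a.e. Two basic observations streamline the geometry: since $\xi$ is increasing, $\xi_0^{n-1}(x)=f^{-(n-1)}(\xi(f^{n-1}x))$; and since $f$ expands on $W^u$ by some $\lambda>1$, the single constraint at time $j=n-1$ implies all earlier ones, so $B^u_n(x,\epsilon)=f^{-(n-1)}(B^u(f^{n-1}x,\epsilon))\cap W^u_\loc(x)$.

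For the lower inequality $\liminf_n -\frac{1}{n}\log\mu_x^\xi(B^u_n(x,\epsilon))\ge h_\mu(f,\xi)$, I would use subordination of $\xi$ to $W^u$: for $\mu$-a.e. $y$ there is $r(y)>0$ with $B^u(y,r(y))\subset\xi(y)$, and by Luzin I can fix $r_0>0$ and a set $E$ with $\mu(E)>1-\eta$ on which $r\ge r_0$. Choose $K$ with $\epsilon/\lambda^K\le r_0$. For $\mu$-a.e. $x$ let $m_n^*(x)$ be the largest $m\le n-1-K$ with $f^m x\in E$; the Birkhoff theorem applied to $\mathbf{1}_E$ readily implies $m_n^*/n\to 1$ a.e. Expansion then yields
\[
B^u_n(x,\epsilon)\subset f^{-m_n^*}\!\bigl(B^u(f^{m_n^*}x,\epsilon/\lambda^{n-1-m_n^*})\bigr)\subset f^{-m_n^*}\xi(f^{m_n^*}x)=\xi_0^{m_n^*}(x),
\]
and consequently
\[
-\frac{1}{n}\log\mu_x^\xi(B^u_n(x,\epsilon))\ge\frac{m_n^*}{n}\cdot\Bigl(-\frac{1}{m_n^*}\log\mu_x^\xi(\xi_0^{m_n^*}(x))\Bigr)\longrightarrow h_\mu(f,\xi)
\]
by Lemma~\ref{LSMB1}.

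For the upper inequality $\limsup_n -\frac{1}{n}\log\mu_x^\xi(B^u_n(x,\epsilon))\le h_\mu(f,\xi)$ the key tool is the quotient formula for the increasing partition: for $\mu$-a.e. $x$ and any Borel $A\subset\xi(f^{n-1}x)$,
\[
\mu_x^\xi\bigl(f^{-(n-1)}A\bigr)=\mu_x^\xi(\xi_0^{n-1}(x))\cdot\mu_{f^{n-1}x}^\xi(A),
\]
a consequence of the $f$-invariance of $\mu$ together with $f^{-(n-1)}\xi\ge\xi$. Pick $r_1\le\epsilon$ and, applying Luzin simultaneously to the two a.e.\ positive measurable functions $y\mapsto r(y)$ and $y\mapsto\mu_y^\xi(B^u(y,r_1))$, select a set $E''$ with $\mu(E'')>1-\eta$ on which $r(y)\ge r_1$ and $\mu_y^\xi(B^u(y,r_1))\ge c>0$. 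Whenever $f^{n-1}x\in E''$ the quotient formula gives $\mu_x^\xi(B^u_n(x,r_1))\ge c\cdot\mu_x^\xi(\xi_0^{n-1}(x))$. Enumerating these ``good'' indices as $v_1<v_2<\cdots$ and setting $\kappa(n)=\min\{k:v_k\ge n\}$, Birkhoff's theorem gives $v_{\kappa(n)}/n\to 1$ $\mu$-a.e. The monotonicity $\mu_x^\xi(B^u_n(x,r_1))\ge\mu_x^\xi(B^u_{v_{\kappa(n)}}(x,r_1))$ combined with Lemma~\ref{LSMB1} then delivers $\limsup_n -\frac{1}{n}\log\mu_x^\xi(B^u_n(x,r_1))\le h_\mu(f,\xi)$, which transfers to $\epsilon$ via $B^u_n(x,r_1)\subset B^u_n(x,\epsilon)$.

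The main obstacle I expect is the upper inequality: one must establish the quotient identity for $\mu_x^\xi$ tailored to the increasing partition $\xi$, arrange a single set $E''$ on which two $\mu$-a.e.\ positive functions are simultaneously bounded away from zero, and then upgrade a bound valid only along the subsequence of good times $v_k$ to a statement at every large $n$ using both the monotonicity of $n\mapsto\mu_x^\xi(B^u_n(x,\cdot))$ and the asymptotic $v_{\kappa(n)}/n\to 1$.
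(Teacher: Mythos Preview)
Your argument is correct, but it takes a genuinely different route from the paper. The paper's proof is very short: it quotes Ledrappier--Young \cite{LY2}, (9.2)--(9.3), for the fact that $\lim_{\epsilon\to 0}\underline{h}_u(f,x,\epsilon,\xi)=\lim_{\epsilon\to 0}\overline{h}_u(f,x,\epsilon,\xi)=h_\mu(f,\xi)$, and then observes that uniform expansion on $W^u$ gives, for any $0<\delta<\epsilon$, an integer $k$ with $B^u_{n+k}(x,\epsilon)\subset B^u_n(x,\delta)\subset B^u_n(x,\epsilon)$; this single line forces both $\underline{h}_u$ and $\overline{h}_u$ to be independent of $\epsilon$, so the limit at any fixed $\epsilon$ already equals $h_\mu(f,\xi)$. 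By contrast, you bypass the citation to \cite{LY2} entirely and instead sandwich $B^u_n(x,\epsilon)$ between $\xi$-cylinders $\xi_0^{m}(x)$ at depths $m\sim n$ via recurrence to sets where $r(\cdot)$ and $\mu_{\cdot}^\xi(B^u(\cdot,r_1))$ are bounded below, and then invoke Lemma~\ref{LSMB1}. Your approach is more self-contained (it does not rely on the local-entropy analysis of \cite{LY2}) but substantially longer; it also introduces a forward reference to Lemma~\ref{LSMB1}, which in the paper appears later --- this is harmless since the proof of Lemma~\ref{LSMB1} uses only Birkhoff and Lemma~\ref{Lconvergence} and is logically independent of Lemma~\ref{LSMB1.1}. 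Two small remarks: what you call ``Luzin'' is really just the elementary fact that an a.e.\ positive measurable function exceeds some $c>0$ on a set of measure close to one; and the claim $m_n^*/n\to 1$ follows from $\mu(E)>0$ (via the ergodic theorem for return times), so no smallness of $\eta$ is needed there.
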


\begin{proof}
Denote
\begin{equation}\label{flocalentropy1}
\begin{split}
&\underline{h}_u(f, x, \epsilon,\xi)
=\liminf_{n\to \infty}-\frac{1}{n}\log\mu_x^{\xi}(B^u_{n}(x,\epsilon)),
  \\
&\overline{h}_u(f, x, \epsilon, \xi)
=\limsup_{n\to \infty}-\frac{1}{n}\log\mu_x^{\xi}(B^u_{n}(x,\epsilon)).
\end{split}
\end{equation}
It is proved in (9.2) and (9.3) in \cite{LY2} that
\begin{equation}\label{flocalentropy2}
\lim_{\epsilon\to 0}\underline{h}_u(f, x, \epsilon, \xi)
=\lim_{\epsilon\to 0}\overline{h}_u(f, x, \epsilon, \xi),
\end{equation}
and hence the \mbox{common} value can be denoted as $h_u(f,x,\xi)$.
When $\mu$ is ergodic, $x\mapsto h_u(f,x,\xi)$ is constant almost
everywhere. This constant coincides with $h_\mu(f, \xi)$.

Now we show that the upper and lower limits in \eqref{flocalentropy1}
are limit.

Since $f$ is uniformly expanding restricted to the unstable manifolds,
for any $0<\d<\e$, there exists $k>0$ such that
$B^u_k(x, \e)\subset B^u(x, \d)$ and therefore
$B^u_{n+k}(x, \e)\subset B^u_n(x, \d)\subset B^u_n(x, \e)$ for any
$n>0$ and $x\in M$.  It implies
\begin{equation*}
\begin{split}
\liminf_{n\to \infty}-\frac{1}{n}\log\mu_x^{\xi}(B^u_{n}(x,\d))
=&\liminf_{n\to \infty}-\frac{1}{n}\log\mu_x^{\xi}(B^u_{n}(x,\epsilon)),  \\
\limsup_{n\to \infty}-\frac{1}{n}\log\mu_x^{\xi}(B^u_{n}(x,\d))
=&\limsup_{n\to \infty}-\frac{1}{n}\log\mu_x^{\xi}(B^u_{n}(x,\epsilon)).
\end{split}
\end{equation*}
It means that both $\overline{h}_i(f, x, \epsilon, \xi)$ and
$\underline{h}_i(f, x, \epsilon, \xi)$ given in \eqref{flocalentropy1}
are independent of $\e$.  So \eqref{flocalentropy2}becomes $\overline{h}_u(f, x, \e, \xi)=\underline{h}_u(f, x, \e, \xi)$.
We get that for $\mu$-a.e. $x$,
\begin{equation*}
H_\mu(f^{-1}\xi|\xi)=h_u(f,x,\xi)
=\lim_{n\to \infty}-\frac{1}{n}\log\mu_x^{\xi}(B^u_{n}(x,\epsilon)). \qedhere
\end{equation*}
\end{proof}

\begin{corollary}\label{Cpointwise}
For any $\eta\in \P^u$ subordinate to unstable manifolds and any $\e>0$,
\begin{equation*}
h_\mu(f|\eta)
=\lim_{n\to \infty}-\frac{1}{n}\log\mu_x^{\eta}(B^u_{n}(x,\epsilon)) \quad
\mu\ae x.
\end{equation*}
\end{corollary}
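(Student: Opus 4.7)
The plan is to transfer Lemma~\ref{LSMB1.1} from an auxiliary $\xi\in\Q^u$ to the given $\eta\in\P^u$ by comparing the conditional measures $\mu_x^\eta$ and $\mu_x^\xi$ near $x$, and then to invoke Theorem~A to identify $h_\mu(f,\xi)$ with $h_\mu(f|\eta)$.

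I would fix any $\xi\in\Q^u$. Then $\eta\vee\xi$ is again subordinate to unstable manifolds, so for $\mu$-a.e.~$x$ the atom $(\eta\vee\xi)(x)=\eta(x)\cap\xi(x)$ contains an open neighborhood of $x$ in $W^u(x)$. Uniform expansion of $f$ along unstable leaves then forces $B^u_n(x,\epsilon)\subset(\eta\vee\xi)(x)$ for all $n\ge n_0(x)$. The standard disintegration identity applied to $\eta\vee\xi\ge\eta$ and $\eta\vee\xi\ge\xi$ yields, for any measurable $A\subset(\eta\vee\xi)(x)$,
\[
\mu_x^\eta(A)=\mu_x^\eta\bigl((\eta\vee\xi)(x)\bigr)\,\mu_x^{\eta\vee\xi}(A),\qquad
\mu_x^\xi(A)=\mu_x^\xi\bigl((\eta\vee\xi)(x)\bigr)\,\mu_x^{\eta\vee\xi}(A).
\]
Taking $A=B^u_n(x,\epsilon)$ for $n\ge n_0(x)$, the ratio of the two Bowen-ball masses equals the $n$-independent constant
$C(x):=\mu_x^\eta((\eta\vee\xi)(x))/\mu_x^\xi((\eta\vee\xi)(x))$, so
\[
-\tfrac{1}{n}\log\mu_x^\eta(B^u_n(x,\epsilon))
=-\tfrac{1}{n}\log C(x)-\tfrac{1}{n}\log\mu_x^\xi(B^u_n(x,\epsilon)).
\]
The first term on the right vanishes in the limit, so Lemma~\ref{LSMB1.1} and Theorem~A together deliver the claimed equality.

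The main obstacle is verifying $C(x)\in(0,\infty)$ for $\mu$-a.e.~$x$, i.e.~that both normalizing masses are strictly positive. The positivity of $\mu_x^\xi((\eta\vee\xi)(x))$ is essentially free from Lemma~\ref{LSMB1.1}: the finiteness of the limit there forces $\mu_x^\xi(B^u_n(x,\epsilon))>0$ for all large $n$, and these balls are eventually contained in $(\eta\vee\xi)(x)$. For the positivity of $\mu_x^\eta((\eta\vee\xi)(x))$ it suffices to show $x\in\mathrm{supp}(\mu_x^\eta)$ for $\mu$-a.e.~$x$; this follows from a countable-basis argument on $W^u_{\loc}$: for a countable base $\{U_k\}$ of open sets, the disintegration formula combined with the identity $\mu_y^\eta=\mu_x^\eta$ whenever $\eta(y)=\eta(x)$ forces $\mu(\{x\in U_k:\mu_x^\eta(U_k)=0\})=0$, and summing over $k$ gives that for $\mu$-a.e.~$x$, $\mu_x^\eta$ assigns positive mass to every open neighborhood of $x$ in $W^u(x)$.
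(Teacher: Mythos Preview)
Your proof is correct and follows the same overall strategy as the paper's: transfer Lemma~\ref{LSMB1.1} from an auxiliary $\xi\in\Q^u$ to $\eta$ via the transitivity of disintegration, then invoke Theorem~A to identify $h_\mu(f,\xi)$ with $h_\mu(f|\eta)$. The paper, however, executes the transfer by a case split: when $\eta(x)\subset\xi(x)$ it compares $\mu_x^\eta$ and $\mu_x^\xi$ directly through the factor $\mu_x^\xi(\eta(x))$, and when $\eta(x)\not\subset\xi(x)$ it uses ergodic recurrence to find $k$ with $f^{-k}(\eta(x))\subset\xi(f^{-k}x)$, then appeals to $f$-invariance of $\mu$ together with Lemma~\ref{Lcond} to pass from $f^{-k}\eta$ back to $\eta$. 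Your use of the join $\eta\vee\xi$ is a cleaner device that treats both cases uniformly and eliminates the recurrence argument entirely. You also make explicit the positivity of the normalizing constants (via the support argument for $\mu_x^\eta$ and via Lemma~\ref{LSMB1.1} for $\mu_x^\xi$), which the paper leaves implicit---it only asserts that $\mu_x^\xi(\eta(x))$ is ``finite,'' whereas what is actually needed for $-\tfrac{1}{n}\log\mu_x^\xi(\eta(x))\to 0$ is that it be strictly positive.
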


\begin{proof}
Take $\xi\in \Q^u$, $\e>0$.  Let $x\in M$ be a generic point.
Since $\eta$ is subordinate to unstable manifolds, there exists $N>0$
such that for any $n>N$, $B^u_{n}(x,\epsilon)\subset \eta(x)$.

Suppose $\eta(x)\subset \xi(x)$, then
\[
\mu_x^\xi(B^u_{n}(x,\epsilon))
=\mu_x^{\eta}(B^u_{n}(x,\epsilon))\mu_x^{\xi}(\eta(x)).
\]
Since for $\mu$-a.e. $x$, $\mu_x^{\xi}(\eta(x))$ is finite,
$\lim_{n\to \infty}-\frac{1}{n}\log\mu_x^{\xi}(\eta(x))=0$.
So by Theorem A and Lemma~\ref{LSMB1.1} we get
\[
h_\mu(f|\eta)=h_\mu(f, \xi)
=\lim_{n\to \infty}-\frac{1}{n}\log\mu_x^{\eta}(B^u_{n}(x,\epsilon)).
\]

Suppose $\eta(x)\not\subseteq \xi(x)$.
Then for $\mu$ almost every $x$, we can take $k=k(x)>0$ such that
$f^{-k}(\eta(x))\subset \xi(f^{-k}(x))$.
In fact, let $\Omega_r:=\{y\in M: B^u(y,r)\subset \xi(y)\}$.
Since $\xi$ is subordinate to $W^u$, $\mu(\cup_{r>0}\Omega_r)=1$.
So there exists $r>0$ such that $\mu(\Omega_r)>0$.
For $\mu$-a.e. $x$, there exist infinitely many $k_i=k_i(x)$
such that $f^{-k_i}x\in \Omega_r$.
Let $k_i>0$ be large enough such that
$f^{-k_i}(\eta(x))\subset B^u(f^{-k_i}(x), r)\subset \xi(f^{-k_i}(x))$. Let $k=k_i$.

Now we have
\begin{equation*}
\begin{split}
h_\mu(f|f^{-k}\eta)
=&\lim_{n\to \infty}-\frac{1}{n}\log\mu_{f^{-k}x}^{f^{-k}\eta}
     (B^u_{n}(f^{-k}(x),\epsilon)) \\
=&\lim_{n\to \infty}-\frac{1}{n}\log\mu_{f^{-k}x}^{f^{-k}\eta}
     (B^u_{n+k}(f^{-k}(x),\epsilon)),
\end{split}
\end{equation*}
Since $f^{-k}\eta\in \P^u$, we have $h_\mu(f|f^{-k}\eta)=h_\mu(f|\eta)$
by Lemma~\ref{Lcond}.  Also, since $\mu$ is an invariant measure
and $f^k(B^u_{n+k}(f^{-k}(x),\epsilon))=B^u_{n}(x,\epsilon)$, it follows that
$\mu_{f^{-k}x}^{f^{-k}\eta}(B^u_{n+k}(f^{-k}(x),\epsilon))
=\mu_x^{\eta}(B^u_{n}(x,\epsilon))$.
So the result of the corollary follows.
\end{proof}

\begin{lemma}\label{LSMB1.2}
Let $\a\in \P$, $\eta\in \P^u$.  Then for any $\xi\in \Q^u$,

\begin{equation*}
h_\mu(f, \a|\eta)
\leq \liminf_{n\to \infty}\frac{1}{n}I_\mu(\alpha_0^{n-1}|\xi)(x) \quad
\mu\ae x.
\end{equation*}
\end{lemma}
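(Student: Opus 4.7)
The strategy is to leverage the Bowen-ball decay rate from Lemma~\ref{LSMB1.1} via a geometric comparison between the cylinder set $\alpha_0^{n-1}(x)$ intersected with $\xi(x)$ and a dynamical ball $B^u_n(x,\epsilon)$ along the unstable leaf.

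First I would reduce the statement. By Theorem~A, $h_\mu(f,\alpha|\eta) = h_\mu(f,\xi)$, and by Lemma~\ref{Lcond} this common value is independent of the choice of $\alpha \in \P$ and $\eta \in \P^u$. Hence it suffices to exhibit a \emph{single} convenient $\alpha \in \P$ for which $\liminf_{n\to\infty} \frac{1}{n} I_\mu(\alpha_0^{n-1}|\xi)(x) \geq h_\mu(f,\xi)$ holds $\mu$-a.e.

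The heart of the argument is the geometric inclusion
$$
\alpha_0^{n-1}(x) \cap \xi(x) \subseteq B^u_n(x,\epsilon) \qquad \text{for } \mu\text{-a.e. } x \text{ and all large } n,
$$
where $\alpha$ is chosen with $\diam(\alpha)=\epsilon_0$ sufficiently small compared to a given $\epsilon>0$. For $y$ in the left side we have $y\in W^u(x)$ (since $\xi$ is subordinate to $W^u$) and $d(f^iy,f^ix)\leq \epsilon_0$ for $0\leq i\leq n-1$. Using (i) the uniform expansion of $f|_{W^u}$ with rate $\lambda>1$, (ii) the comparability of the intrinsic metric $d^u$ and the ambient metric $d$ on small local pieces of $W^u$, and (iii) that the size of local unstable manifolds is bounded below uniformly on a set of arbitrarily large $\mu$-measure, one argues that the orbit constraint $d(f^iy,f^ix)\leq \epsilon_0$ forces $d^u(f^iy,f^ix)\leq \epsilon$. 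Once the inclusion is in hand, it follows that $\mu_x^\xi(\alpha_0^{n-1}(x)) \leq \mu_x^\xi(B^u_n(x,\epsilon))$, so
$$
\frac{1}{n}I_\mu(\alpha_0^{n-1}|\xi)(x) \geq -\frac{1}{n}\log \mu_x^\xi(B^u_n(x,\epsilon)),
$$
and taking $\liminf$ together with Lemma~\ref{LSMB1.1} yields the desired lower bound.

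The hard part will be the geometric inclusion above. The delicacy is that the unstable leaf $W^u(x)$ can fold back close to itself in the ambient metric, so $y\in W^u(x)$ with $d(y,x)\leq\epsilon_0$ does not automatically imply $d^u(y,x)\leq C\epsilon_0$: distant sheets of $W^u(x)$ may pass close to $x$ in $d$. Controlling these ``multi-sheet'' contributions rests on the uniform expansion: any $y\in\xi(x)$ with $d^u(y,x)$ beyond a threshold $\delta$ must, after a bounded number of iterates, have $d^u(f^iy,f^ix)$ exceed the local injectivity radius of the unstable foliation, whence $d(f^iy,f^ix)$ also becomes macroscopic, contradicting $y\in\alpha_0^{n-1}(x)$ when $\epsilon_0$ is small enough. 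The precise choice of thresholds depends on $x$ and requires restricting to a set of large $\mu$-measure where the comparability constants and the local unstable-leaf radii are uniform.
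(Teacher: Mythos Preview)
Your overall strategy matches the paper's: reduce the left side to $h_\mu(f,\xi)$ via Theorem~A, then compare $\mu_x^\xi(\alpha_0^{n-1}(x))$ with $\mu_x^\xi(B^u_n(x,\epsilon))$ through a geometric inclusion and invoke Lemma~\ref{LSMB1.1}.

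There is, however, a logical slip in your reduction. You claim that ``it suffices to exhibit a \emph{single} convenient $\alpha\in\P$'', but the lemma is asserted for \emph{every} $\alpha\in\P$, and the right-hand side $\liminf_n\frac1n I_\mu(\alpha_0^{n-1}|\xi)(x)$ genuinely depends on $\alpha$; only the left side $h_\mu(f,\alpha|\eta)$ is independent of $\alpha$. Proving the bound for one well-chosen $\alpha$ therefore says nothing about the others. Fortunately the reduction is also unnecessary: every $\alpha\in\P$ already has $\diam\alpha\le\epsilon_0$, and since Lemma~\ref{LSMB1.1} holds for \emph{any} $\epsilon>0$, you can simply keep the given $\alpha$ and run the inclusion argument with $\epsilon$ chosen suitably large relative to $\epsilon_0$.

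The paper handles arbitrary $\alpha$ by a slightly different device that also sidesteps most of your ``folding'' discussion. Rather than arguing directly that $d(f^iy,f^ix)\le\epsilon_0$ together with $y\in\xi(x)$ forces $d^u(f^iy,f^ix)\le\epsilon$, it first picks $k$ with $\diam(\alpha_0^k\vee\xi)\le\epsilon$, then writes
\[
(\alpha_0^{k+n-1}\vee\xi)(x)=\bigvee_{i=0}^{n-1}(f^{-i}\alpha_0^k\vee\xi)(x)\subset B^u_n(x,\epsilon),
\]
and finally uses that the offset by the fixed $k$ does not change $\liminf_n\frac1n\log\mu_x^\xi(\alpha_0^{n-1}(x))$. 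This gives the inequality for the given $\alpha$ in one stroke; the multi-sheet issue you worry about is absorbed into the single assertion that the $d^u$-diameter of $\alpha_0^k\vee\xi$ can be made small, rather than argued dynamically along the whole orbit segment.
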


\begin{proof}
Let $\e>0$. Take $k>0$ such that $\diam \a^k_0\vee \xi\le \e$.
Hence, for any $n>0$,
$(\a_0^{k+n-1}\vee\xi)(x)=\vee_{i=0}^{n-1} (f^{-i}\a^k_0\vee \xi)(x)
\subset B^u_n(x,\e)$.  By Theorem~A and Lemma~\ref{LSMB1.1},
\begin{equation*}
\begin{split}
h_\mu(f, \a|\eta)=&h_\mu(f, \xi)
=\lim_{n\to \infty}-\frac{1}{n}\log\mu_x^\xi(B^u_{n}(x,\epsilon))
 \leq \liminf_{n\to \infty} -\frac{1}{n}\log\mu_x^\xi(\alpha_0^{k+n-1}(x)) \\
=&\liminf_{n\to \infty}-\frac{1}{n}\log\mu_x^\xi(\alpha_0^{n-1}(x))
= \liminf_{n\to \infty}\frac{1}{n}I_\mu(\alpha_0^{n-1}|\xi)(x).
\end{split}
\end{equation*}
for $\mu$-a.e. $x$.
\end{proof}

Next, we need to pass the given partition from $\xi$ to $\eta$ to obtain
the estimates we want.

\begin{lemma}\label{LSMB1.3}
Let $\a\in \P, \eta\in \P^u$ and $\xi\in \Q^u$.  Then for $\mu$-a.e. $x$,
\begin{equation*}
\begin{split}
\liminf_{n\to \infty}\frac{1}{n}I_\mu(\a_0^{n-1}|\xi)(x)
= &\liminf_{n\to \infty}\frac{1}{n}I_\mu(\a_0^{n-1}|\eta)(x),  \\
\limsup_{n\to \infty}\frac{1}{n}I_\mu(\a_0^{n-1}|\xi)(x)
= &\limsup_{n\to \infty}\frac{1}{n}I_\mu(\a_0^{n-1}|\eta)(x).
\end{split}
\end{equation*}
\end{lemma}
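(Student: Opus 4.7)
The plan is to pass from one conditioning partition to the other via a pointwise information-theoretic identity, and then show the ``error term'' is $O(1)$ $\mu$-a.e. Specifically, apply Lemma~\ref{Lcond1}(ii) to $I_\mu(\a_0^{n-1}\vee\xi\mid\eta)$ in the two possible orders, and analogously to $I_\mu(\a_0^{n-1}\vee\eta\mid\xi)$, and subtract. This yields the pointwise identity
\begin{equation*}
I_\mu(\a_0^{n-1}|\eta)(x)-I_\mu(\a_0^{n-1}|\xi)(x)
=\bigl[I_\mu(\xi|\eta)(x)-I_\mu(\eta|\xi)(x)\bigr]
+\bigl[I_\mu(\eta|\a_0^{n-1}\vee\xi)(x)-I_\mu(\xi|\a_0^{n-1}\vee\eta)(x)\bigr],
\end{equation*}
which is what we have to control. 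If we can show the right-hand side is bounded above (in absolute value) by an $n$-independent function that is finite $\mu$-a.e., then dividing by $n$ yields an expression that tends to $0$ a.e., which instantly gives both equalities for $\liminf$ and $\limsup$.

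For the first bracket, the functions $I_\mu(\xi|\eta)(x)=-\log\mu_x^\eta(\xi(x))$ and $I_\mu(\eta|\xi)(x)=-\log\mu_x^\xi(\eta(x))$ are independent of $n$. Finiteness $\mu$-a.e.\ will follow from two ingredients: (i) both $\eta(x)$ and $\xi(x)$ contain an open neighborhood of $x$ in $W^u_{\loc}(x)$ for $\mu$-a.e.\ $x$ ($\xi\in\Q^u$ is subordinate to $W^u$ by construction, and we may work with an $\eta\in\P^u$ built from a partition with $\mu$-null boundary, as permitted by Lemma~\ref{Lcond}), and (ii) the standard Rokhlin-type fact that $\mu$-a.e.\ $x$ lies in the support of its own canonical conditional measures, so any $W^u$-neighborhood of $x$ has positive $\mu_x^\eta$- and $\mu_x^\xi$-mass.

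For the second bracket we exploit the uniform expansion of $f$ on unstable manifolds. Since $\xi(x)$ contains $B^u(x,r(x))$ for some $r(x)>0$ at $\mu$-a.e.\ $x$, and since any element $(\a_0^{n-1}\vee\eta)(x)\subset\eta(x)\subset W^u_{\loc}(x)$ shrinks in the $d^u$-metric at exponential rate by the expansion of $f$ on $W^u$, there exists $N=N(x)$ such that $(\a_0^{n-1}\vee\eta)(x)\subset\xi(x)$ for all $n\ge N$, which makes $\mu_x^{\a_0^{n-1}\vee\eta}(\xi(x))=1$ and hence $I_\mu(\xi|\a_0^{n-1}\vee\eta)(x)=0$ for all large $n$. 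The same argument with the roles of $\xi$ and $\eta$ swapped shows $I_\mu(\eta|\a_0^{n-1}\vee\xi)(x)=0$ eventually, so the second bracket actually vanishes for $n\ge N(x)$.

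The main obstacle is the verification in the previous paragraph that $(\a_0^{n-1}\vee\eta)(x)$ eventually lies inside a preassigned $W^u$-neighborhood of $x$: the Bowen ball is defined through the ambient metric $d$ on $M$, while the shrinking we need is with respect to the intrinsic metric $d^u$ on $W^u$, so we must argue that for $y\in W^u_{\loc}(x)\cap\eta(x)$ with $d^u(x,y)$ bounded below, the exponential growth of $d^u(f^i x,f^i y)$ forces $f^i y$ out of any small $M$-neighborhood of $f^i x$ (and therefore out of $\a(f^i x)$) for some $i<n$; this requires a careful comparison of $d$ and $d^u$ on local unstable leaves and the choice of $\diam\a\le\e_0$ sufficiently small. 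Once this step is in place, the rest of the argument is bookkeeping via the identity above.
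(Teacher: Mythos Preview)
Your approach is essentially the paper's own proof: both derive the same pointwise identity from Lemma~\ref{Lcond1}(ii) and then argue that each error term is $o(n)$ $\mu$-a.e., the first bracket because $I_\mu(\xi|\eta)$ and $I_\mu(\eta|\xi)$ are a.e.\ finite, and the second because the atoms of $\a_0^{n-1}\vee\xi$ and $\a_0^{n-1}\vee\eta$ shrink to points inside the unstable leaf. One small quibble: your appeal to Lemma~\ref{Lcond} to reduce to a subordinate $\eta$ is not legitimate as stated, since that lemma concerns the integrated quantities $h_\mu(f,\a|\eta)$ rather than the pointwise information functions; however the reduction is unnecessary, since on $(\a_0^{n-1}\vee\xi)(x)\subset W^u_{\loc}(x)$ the partition $\eta$ agrees with the finite partition $\eta^{\tilde u}$, and the paper's phrasing (``$\diam(\a_0^{n-1}\vee\xi)(x)\to 0$'') already handles this case directly.
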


\begin{proof}
By Lemma~\ref{Lcond1}, for $\mu$-a.e. $x$,
\begin{equation}\label{fcondinfor1}
\begin{split}
 &I_\mu(\alpha_0^{n-1}|\xi)
        +I_\mu(\eta|\alpha_0^{n-1}\vee \xi)
=I_\mu(\alpha_0^{n-1}\vee \eta|\xi)
=I_\mu(\alpha_0^{n-1}|\xi\vee \eta)
        +I_\mu(\eta|\xi),    \\
 &I_\mu(\alpha_0^{n-1}|\eta)
        +I_\mu(\xi|\alpha_0^{n-1}\vee \eta)
=I_\mu(\alpha_0^{n-1}\vee \xi|\eta)
=I_\mu(\alpha_0^{n-1}|\xi\vee \eta)
        +I_\mu(\xi|\eta).
\end{split}
\end{equation}
Since $\diam (\a_0^{n-1}\vee\xi)(x)\to 0$ and
$\diam (\a_0^{n-1}\vee\eta)(x)\to 0$ as $n\to \infty$ for any $x$,
\[
\disp\lim_{n\to \infty}\frac{1}{n}I_\mu(\eta|\a_0^{n-1}\vee\xi)(x)
=0
=\disp\lim_{n\to \infty}\frac{1}{n}I_\mu(\xi|\a_0^{n-1}\vee\eta)(x)
\quad \mu-\text{a.e.}\ x,
\]
Also, since $I_\mu(\xi|\eta)$ and $I_\mu(\eta|\xi)$ are finite $\mu$
almost everywhere,
\[
\disp\lim_{n\to \infty}\frac{1}{n}I_\mu(\xi|\eta)(x)
=0
=\disp\lim_{n\to \infty}\frac{1}{n}I_\mu(\eta|\xi)(x)
\quad \mu-\text{a.e.}\ x.
\]

By \eqref{fcondinfor1}, for $\mu$ almost every $x$,
\begin{equation*}
I_\mu(\alpha_0^{n-1}|\xi)
        +I_\mu(\eta|\alpha_0^{n-1}\vee \xi)+I_\mu(\xi|\eta)
=I_\mu(\alpha_0^{n-1}|\eta)
        +I_\mu(\xi|\alpha_0^{n-1}\vee \eta) +I_\mu(\eta|\xi),
\end{equation*}
Dividing by $n$, and taking $\liminf$ and $\limsup$,
we can get the equalities.
\end{proof}

\begin{proof}[Proof of Theorem B: the Lower Limits]
By Lemma~\ref{LSMB1.2} and \ref{LSMB1.3} we get directly
\begin{equation*}
h_\mu(f, \a|\eta)
\le \liminf_{n\to \infty}\frac{1}{n}I_\mu(\alpha_0^{n-1}|\eta)(x). \qedhere
\end{equation*}
\end{proof}

\subsection{A generalized ergodic theorem}

The next results can be viewed as generalizations of Birkhoff ergodic theorem.
The results and methods of proof can be seen in references for some
particular sequence of functions
(e.g. \cite{Pet}, proof of Theorem 2.3 on p.261).
We state it in a more general setting.

\begin{proposition}\label{Lergodic}
Let $T: X\to X$ be a transformation preserving an ergodic measure $\mu$.
Suppose $\{\phi_n\}$ is a sequence of functions on $X$
satisfying the following:
\begin{enumerate}
\item[(i)]  $\disp \lim_{n\to\infty} \phi_n(x)=\phi_0(x)$ \ $\mu$-a.e. $x$,
for some function $\phi_0\in L^1(\mu)$;
\item[(ii)] $\phi^*:=\sup_{n}|\phi_n| \in L^1(\mu)$.
\end{enumerate}
Then for $\mu$-a.e. $x\in X$,
\[
\lim_{n\to\infty}\frac{1}{n}\sum_{i=0}^{n-1}\phi_{n-i}(T^i(x))
=\lim_{n\to\infty}\frac{1}{n}\sum_{i=0}^{n-1}\phi_0(T^i(x))
=\int \phi_0 d\mu,
\]
and
\[
\lim_{n\to\infty}\frac{1}{n}\sum_{i=0}^{n-1}\phi_{i}(T^i(x))
=\lim_{n\to\infty}\frac{1}{n}\sum_{i=0}^{n-1}\phi_0(T^i(x))
=\int \phi_0 d\mu.
\]
\end{proposition}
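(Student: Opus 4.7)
The plan is to reduce both assertions to Birkhoff's ergodic theorem applied to $\phi_0$, by controlling the deviation $|\phi_n-\phi_0|$ through the auxiliary functions
\[
\psi_k(x):=\sup_{n\ge k}|\phi_n(x)-\phi_0(x)|.
\]
By hypothesis~(i), $\psi_k(x)\searrow 0$ for $\mu$-a.e. $x$, and by hypothesis~(ii), $\psi_k\le 2\phi^*\in L^1(\mu)$; therefore Lebesgue's dominated convergence theorem gives $\int \psi_k\,d\mu\to 0$ as $k\to\infty$. Since $\mu$ is $T$-ergodic, Birkhoff's ergodic theorem applied to $\phi_0$, to each $\psi_k$, and to $\phi^*$ yields, for $\mu$-a.e. $x$, the pointwise limits
\[
\frac{1}{n}\sum_{i=0}^{n-1}\phi_0(T^ix)\to\int\phi_0\,d\mu,\quad
\frac{1}{n}\sum_{i=0}^{n-1}\psi_k(T^ix)\to\int\psi_k\,d\mu,\quad
\frac{1}{n}\sum_{i=0}^{n-1}\phi^*(T^ix)\to\int\phi^*\,d\mu.
\]
Fix such a generic $x$ once and for all.

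For the second limit, I would write
\[
\frac{1}{n}\sum_{i=0}^{n-1}\phi_i(T^ix)
=\frac{1}{n}\sum_{i=0}^{n-1}\phi_0(T^ix)
+\frac{1}{n}\sum_{i=0}^{n-1}\bigl(\phi_i(T^ix)-\phi_0(T^ix)\bigr),
\]
and estimate the second sum by splitting at index $k$: the first $k$ terms are bounded by $\frac{1}{n}\sum_{i=0}^{k-1}2\phi^*(T^ix)$, which tends to $0$ since $k$ is fixed; for $i\ge k$ the integrand is pointwise at most $\psi_k(T^ix)$, and Birkhoff for $\psi_k$ gives that its Cesàro average is at most $\int\psi_k\,d\mu+o(1)$. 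Letting $k\to\infty$ after $n\to\infty$ drives this bound to $0$, proving the second assertion.

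For the first limit, I would split the index set $\{0,\dots,n-1\}$ into $\{0,\dots,n-k\}$ and $\{n-k+1,\dots,n-1\}$. On the first block, $n-i\ge k$ so $|\phi_{n-i}(T^ix)-\phi_0(T^ix)|\le\psi_k(T^ix)$, and the Cesàro average over this block still tends to $\int\psi_k\,d\mu$. On the second block, of fixed length $k-1$, I use the brute bound $|\phi_{n-i}(T^ix)-\phi_0(T^ix)|\le 2\phi^*(T^ix)$ and rewrite
\[
\frac{1}{n}\sum_{i=n-k+1}^{n-1}\phi^*(T^ix)
=\frac{1}{n}\sum_{i=0}^{n-1}\phi^*(T^ix)-\frac{n-k+1}{n}\cdot\frac{1}{n-k+1}\sum_{i=0}^{n-k}\phi^*(T^ix),
\]
which tends to $\int\phi^*\,d\mu-\int\phi^*\,d\mu=0$ as $n\to\infty$. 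Again letting $k\to\infty$ gives the first assertion.

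The main obstacle I anticipate is the tail $\frac{1}{n}\sum_{i=n-k+1}^{n-1}\phi^*(T^ix)$ in the first limit: one cannot simply use the standard "finitely many bounded terms go to zero" argument because the indices $T^ix$ are not fixed as $n$ varies. The resolution is the telescoping identity above, which re-expresses this near-top tail as the difference of two Cesàro averages of $\phi^*$ having the same Birkhoff limit. Once this is observed, the whole proof is a routine $\epsilon/k$-style approximation argument.
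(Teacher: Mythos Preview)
Your proof is correct and follows essentially the same approach as the paper's: both introduce the tail-supremum functions $\psi_k=\sup_{n\ge k}|\phi_n-\phi_0|$ (the paper first reduces to $\phi_0=0$ and calls these $\phi_N^*$), use dominated convergence to get $\int\psi_k\,d\mu\to 0$, apply Birkhoff to $\psi_k$ on the main block, and control the remaining block of fixed length $k$ via Birkhoff applied to $\phi^*$. Your telescoping identity for the near-top tail is exactly equivalent to the paper's observation that $\frac{1}{n}\phi^*(T^{n}x)\to 0$ (both follow from convergence of the Ces\`aro averages of $\phi^*$), so the arguments are the same in substance.
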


\begin{proof}
We may assume $\phi_0=0$, otherwise we can replace $\phi_n$ by
$\phi_n-\phi_0$ and take $\phi^*=\sup_{n}|\phi_n-\phi_0|$.

Let $\e>0$ be given. Denote $\phi_N^*=\sup_{n\ge N}\phi_n$, then
$\phi_N^*\le \phi^*$ and $\phi_N^*\to 0$ \
$\mu$-a.e. as $N\to \infty$.
 By Lebesgue's dominated convergence theorem, we have
\[
\lim_{N\to \infty} \int \phi_N^* d\mu
=\int \lim_{N\to \infty} \phi_N^* d\mu
=0.
\]
So we can take $N_0>0$ such that for any $N \ge N_0$, $\int \phi_N^* d\mu < \e$.

By Birkhoff ergodic theorem there exists $N_1=N_1(x, N)>N$ such that for any
$n>N_1$,
\[
\frac{1}{n} \sum_{i=0}^{n-1} \phi_{N}^*(T^ix)
\le \Big|\frac{1}{n}\sum_{i=0}^{n-1} \phi_{N}^*(T^ix)-\int \phi_N^* d\mu \Big|
+\int \phi_N^* d\mu  < 2\e.
\]
Hence
\[
\frac{1}{n}\Big| \sum_{i=0}^{n-N-1} \phi_{n-i}(T^ix)\Big|
\le \frac{1}{n}\sum_{i=0}^{n-N-1} \phi_{N}^*(T^ix)
\le \frac{1}{n}\sum_{i=0}^{n-1} \phi_{N}^*(T^ix)<2\e.
\]

On the other hand, Birkhoff ergodic theorem implies
$\dfrac{1}{n} \phi^*(T^ny)\to 0$ $\mu$-a.e. $y$.
Hence, there exists $N_2=N_2(x, N)>N$ such that for any $n>N_2$,
\[
\frac{1}{n}\Big|\sum_{i=n-N}^{n-1} \phi_{n-i}(T^ix)\Big|
\le \frac{1}{n}\sum_{i=1}^{N} |\phi_{i}(T^{n-i}x)|
\le \frac{1}{n}\sum_{i=1}^{N} \phi^*(T^{n-i}x)
< \e.
\]
So we get that for any $n > \max\{N_0, N_1, N_2\}$,
$\disp \frac{1}{n}\Big|\sum_{i=0}^{n-1} \phi_{n-i}(T^ix)\Big|< 3\e$.
Hence we obtain the first equality.

The second formula can be proved in a similar way.
\end{proof}

\begin{remark}
If $\phi_n = \phi$ for all $n$ in the above proposition,
then it is Birkhoff ergodic theorem.
\end{remark}

\subsection{Proof of Theorem B: Upper Limits}
First we show the result with $\a$ replaced by an increasing
partition $\xi$, which is easy to get.
Then we use Lemma~\ref{Lmain} below to pass the result
from $H_\mu(\xi_0^{n-1}|\eta)$ to $H_\mu(\a_0^{n-1}|\eta)$.

\begin{lemma}\label{LSMB1}
For any $\eta\in \P^u$ and $\xi\in \Q^u$,
\begin{equation*}
\lim_{n\to \infty}\frac{1}{n}I_\mu(f^{-n}\xi|\eta)(x)
=\lim_{n\to \infty}\frac{1}{n}I_\mu(f^{-n}\xi|\xi)(x)
=h_\mu(f, \xi)
\quad \mu\ae x.
\end{equation*}
\end{lemma}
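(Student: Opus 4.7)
My plan is to reduce both equalities to Birkhoff ergodic sums of the single $L^1$ function
\[
\phi:=I_\mu(f^{-1}\xi\mid\xi),\qquad \int\phi\,d\mu = H_\mu(f^{-1}\xi\mid\xi) = h_\mu(f,\xi),
\]
exploiting crucially that $\xi$ is increasing, so $f^{-n}\xi\ge\xi$ and $f^{-n}\xi=\bigvee_{i=0}^{n}f^{-i}\xi$.

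For the second equality, iterating the chain rule (Lemma~\ref{Lcond1}(ii)) telescopes
\[
I_\mu(f^{-n}\xi\mid\xi)(x)=\sum_{i=1}^{n}I_\mu\bigl(f^{-i}\xi\,\big|\,f^{-(i-1)}\xi\bigr)(x).
\]
The $f$-invariance of $\mu$ and the natural transformation of conditional measures under $f$ identify each summand with $\phi(f^{i-1}x)$, so Birkhoff's ergodic theorem applied to $\phi$ immediately yields the almost-sure limit $h_\mu(f,\xi)$.

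For the first equality, I would first split off the $\eta$-dependence. Since $\xi\vee f^{-n}\xi=f^{-n}\xi$, the chain rule gives the clean identity
\[
I_\mu(f^{-n}\xi\mid\eta)(x)=I_\mu(\xi\mid\eta)(x)+I_\mu(f^{-n}\xi\mid\xi\vee\eta)(x).
\]
The term $I_\mu(\xi\mid\eta)(x)$ is $\mu$-a.e.\ finite (for $\mu$-a.e.\ $x$ both $\xi(x)$ and $\eta(x)$ contain a common relative unstable neighborhood of $x$), so it disappears after dividing by $n$. Telescoping the remaining piece exactly as in the second equality and using $f$-invariance of $\mu$ rewrites it as $\frac{1}{n}\sum_{i=0}^{n-1}I_\mu(f^{-1}\xi\mid\xi\vee f^{i}\eta)(f^{i}x)$.

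The last step replaces each summand by $\phi(f^{i}x)$ via Lemma~\ref{Lconvergence}, applied with $k=\infty$ and $\b=f^{-1}\xi$: for $\mu$-a.e.\ $x$ there exists $N(x)$ such that for every $i>N(x)$ one has the identity $I_\mu(f^{-1}\xi\mid\xi\vee f^{i}\eta)(f^{i}x)=\phi(f^{i}x)$. Splitting the Ces\`aro sum at $N(x)$, the head is a finite random number killed by the $1/n$ factor, and the tail is a genuine Birkhoff average of $\phi$, again converging to $h_\mu(f,\xi)$ almost everywhere. The main subtlety, rather than obstacle, is that Lemma~\ref{Lconvergence} delivers the identity only at orbit points $f^{i}x$, not pointwise in $y$; fortunately this is precisely the shape demanded by the Ces\`aro sum. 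One may assume $\eta$ is subordinate to $W^u$ without loss of generality, since the statement is independent of the choice of $\eta\in\P^u$ by Lemma~\ref{Lcond}.
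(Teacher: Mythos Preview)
Your proposal is correct and follows essentially the same approach as the paper: the paper applies Lemma~\ref{Linduction}(i) with $\beta=\gamma=\xi$ and then with $\beta=\xi,\gamma=\eta$ to obtain the identical Birkhoff sums (of $I_\mu(\xi\mid f\xi)$, which is your $\phi\circ f^{-1}$), and then invokes Lemma~\ref{Lconvergence} exactly as you do to replace the $f^i\eta$-conditioned summands by $I_\mu(\xi\mid f\xi)(f^ix)$ beyond a random index $N(x)$. One minor correction: your reduction to subordinate $\eta$ cannot be cited to Lemma~\ref{Lcond}, which concerns the entropy $h_\mu(f,\alpha\mid\eta)$ rather than the pointwise quantity $\tfrac{1}{n}I_\mu(f^{-n}\xi\mid\eta)(x)$; the appropriate reduction is via an argument like that of Lemma~\ref{LSMB1.3}, though the paper's own proof leaves this point implicit as well.
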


\begin{proof}
Applying Lemma~\ref{Linduction}(i) with $\b=\c=\xi$,
and then Birkhoff ergodic theorem, we have that for almost every $x$,
\begin{equation*}
\lim_{n\to \infty}\frac{1}{n}I_\mu(f^{-n+1}\xi|\xi)(x)
=\lim_{n\to \infty}\frac{1}{n}\sum_{i=0}^{n-1} I_\mu(\xi|f\xi)(f^i(x))
=H_\mu(\xi|f\xi)=h(f, \xi),
\end{equation*}
where we used the fact $\xi_0^{i-1}=f^{-i+1}\xi$ and
$f\xi\vee f^i\xi=f\xi$ for all $i\ge 1$.

With $\b=\xi, \c=\eta$, we use Lemma~\ref{Linduction}(i) again to get
\begin{equation*}
\lim_{n\to \infty}\frac{1}{n}I_\mu(f^{-n+1}\xi|\eta)(x)
=\lim_{n\to \infty}\frac{1}{n}\Big[I_\mu(\xi|\eta)(x)+
  \sum_{i=1}^{n-1} I_\mu(\xi|f\xi\vee f^i\eta)(f^i(x))\Big].
\end{equation*}
By Lemma~\ref{Lconvergence}, for $\mu\ae x$,
there exist $N>0$ such that for any $i>N$,
$I_\mu(\xi|f\xi\vee f^i\eta)(f^i(x))=I_\mu(\xi|f\xi)(f^i(x))$.
Therefore the limit is also equal to $h(f, \xi)$.
\end{proof}

\begin{lemma}\label{Lmain}
Let $\a\in \P$, $\eta\in \P^u$ and $\xi\in \Q^u$.  Then
for $\mu$-a.e. $x$,
\[
\lim_{n\to\infty}\frac{1}{n}I_\mu(\a_0^{n-1}|\xi_0^{n-1}\vee \eta)(x)=0.
\]
\end{lemma}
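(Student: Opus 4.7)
The plan is to reduce the claim to showing $\frac{1}{n}I_\mu(\a_0^{n-1}|\xi_0^{n-1})(x)\to 0$ (i.e., dropping $\eta$ from the conditioning), then expand via Lemma~\ref{Linduction}(ii) into a Ces\`aro-type sum and apply the generalized ergodic theorem (Proposition~\ref{Lergodic}).

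\textbf{Reduction step.} Since $\xi$ is increasing, $\xi_0^{n-1}=f^{-(n-1)}\xi$. Applying Lemma~\ref{Lconvergence} with $k=\infty$ gives, for $\mu$-a.e. $x$, some $N(x)$ such that $\xi_0^{n-1}(x)\subset\eta(x)$ for every $n>N(x)$; hence $(\xi_0^{n-1}\vee\eta)(x)=\xi_0^{n-1}(x)$. Because $\xi_0^{n-1}\vee\eta$ refines $\xi_0^{n-1}$ globally and at such $x$ the atom $\xi_0^{n-1}(x)$ contains only a single atom of $\xi_0^{n-1}\vee\eta$, the Rokhlin decomposition forces $\mu_x^{\xi_0^{n-1}\vee\eta}=\mu_x^{\xi_0^{n-1}}$. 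Consequently $I_\mu(\a_0^{n-1}|\xi_0^{n-1}\vee\eta)(x)=I_\mu(\a_0^{n-1}|\xi_0^{n-1})(x)$ for $n>N(x)$, and it suffices to prove $\frac{1}{n}I_\mu(\a_0^{n-1}|\xi_0^{n-1})(x)\to 0$ a.e.

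\textbf{Decomposition and ergodic averaging.} Apply Lemma~\ref{Linduction}(ii) with $\c=f^{-(n-1)}\xi$. Since $f^{n-1}\c=\xi$ and $f^i\c=f^{-(n-1-i)}\xi$, setting $\psi_j(y):=I_\mu(\a\,|\,\a_1^j\vee f^{-j}\xi)(y)$ and reindexing $j=n-1-i$ yields
\begin{equation*}
I_\mu(\a_0^{n-1}|\xi_0^{n-1})(x)
=I_\mu(\a|\xi)(f^{n-1}x)+\sum_{i=0}^{n-2}\psi_{n-1-i}(f^ix).
\end{equation*}
The partitions $\zeta_j:=\a_1^j\vee f^{-j}\xi$ are increasing in $j$ (the second factor by the increasing property of $\xi$), and $\bigvee_j\zeta_j\ge\bigvee_j f^{-j}\xi=\ep$ by the generator condition on $\xi$; so the martingale convergence theorem gives $\mu_y^{\zeta_j}(\a(y))\to 1$ and hence $\psi_j(y)\to 0$ for $\mu$-a.e. $y$, while Lemma~\ref{Lphi1} gives $\sup_j\psi_j\in L^1(\mu)$. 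Applying Proposition~\ref{Lergodic} with $T=f$, $\phi_m:=\psi_{m-1}$ for $m\ge 1$, and $\phi_0:=0$ yields $\frac{1}{n}\sum_{i=0}^{n-1}\psi_{n-1-i}(f^ix)\to 0$ a.e. The discrepancy with our sum is a single boundary term $\frac{1}{n}\psi_0(f^{n-1}x)=\frac{1}{n}I_\mu(\a|\xi)(f^{n-1}x)$, and together with the leading $\frac{1}{n}I_\mu(\a|\xi)(f^{n-1}x)$ in the decomposition these tend to $0$ a.e., since $I_\mu(\a|\xi)\in L^1(\mu)$ implies $\frac{1}{n}g(f^n x)\to 0$ for such $g$.

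\textbf{Main obstacle.} The delicate point is the pointwise identification of information functions in the reduction: the two conditional measures must coincide not just as sets but as probability measures, which requires both the global refinement $\xi_0^{n-1}\vee\eta\ge\xi_0^{n-1}$ and the pointwise containment $\xi_0^{n-1}(x)\subset\eta(x)$. The only place where $\eta$ meaningfully enters is here, via Lemma~\ref{Lconvergence}; once the $\eta$ is absorbed, the remainder of the argument is a martingale-plus-ergodic-theorem computation that depends only on the generator property of $\xi$ and the $L^1$-domination from Lemma~\ref{Lphi1}.
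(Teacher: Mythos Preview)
Your proof is correct and uses the same toolkit as the paper (Lemma~\ref{Linduction}(ii), Lemma~\ref{Lconvergence}, Lemma~\ref{Lphi1}, Proposition~\ref{Lergodic}), but with the order of two steps reversed. The paper applies Lemma~\ref{Linduction}(ii) first with $\c=\xi_0^{n-1}\vee\eta$, obtaining terms $I_\mu(\a|\a_1^{n-1-i}\vee\xi_1^{n-1-i}\vee f^i\eta)(f^ix)$, and only afterwards uses Lemma~\ref{Lconvergence} to strip the $f^i\eta$ from each term at the orbit point $f^i(x)$. You instead invoke Lemma~\ref{Lconvergence} once at the base point $x$ to reduce to $I_\mu(\a_0^{n-1}|\xi_0^{n-1})$, and then decompose. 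After that, both proofs run the same functions $\phi_n=I_\mu(\a|\a_1^{n-1}\vee f^{-(n-1)}\xi)$ through Proposition~\ref{Lergodic}. Your ordering is marginally cleaner because the $\eta$ is dealt with once rather than term by term. One cosmetic point: in your bookkeeping the ``boundary term'' $\psi_0(f^{n-1}x)$ and the ``leading term'' $I_\mu(\a|\xi)(f^{n-1}x)$ are the same function, so the Ces\`aro sum from Proposition~\ref{Lergodic} already equals $\frac{1}{n}I_\mu(\a_0^{n-1}|\xi_0^{n-1})(x)$ exactly, with no residual terms to control.
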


\begin{proof}
By Lemma~\ref{Linduction}(ii) with $\c=\xi_0^{n-1}\vee \eta$,
\begin{equation}\label{fmain1}
\begin{split}
 &I_\mu(\alpha_0^{n-1}|\xi_0^{n-1}\vee \eta)(x) \\
=&I_\mu(\alpha|\xi\vee f^{n-1}\eta)(f^{n-1}(x))
+\sum_{i=0}^{n-2}I_\mu(\alpha|\alpha_1^{n-1-i}\vee \xi_1^{n-1-i}\vee f^i\eta)(f^i(x)),
\end{split}
\end{equation}
where we use the fact $f^i\xi_0^{n-1}=\xi_{-i}^{n-1-i}=\xi_1^{n-1-i}$
since $\xi$ is increasing.

Take $\phi_1=I_\mu(\alpha|\xi)(x)$, and
$\phi_n(x)=I_\mu(\alpha|\alpha_1^{n-1}\vee \xi_1^{n-1})(x)$.
Since $\diam (\alpha_1^{n-1}\vee \xi_1^{n-1})(x)\to 0$ as $n\to\infty$,
$\phi_n\to 0$ as $n\to \infty$ almost everywhere.

Also, by Lemma~\ref{Lphi1}, $\phi^*=\sup_{n} \phi_n\in L^1(\mu)$.
Hence we can apply Lemma~\ref{Lergodic} to get that $\mu$-a.e. $x$,
\[
\lim_{n\to\infty}\frac{1}{n}\Big[I_\mu(\alpha|\xi)(f^{n-1}(x))
+\sum_{i=0}^{n-2}I_\mu(\alpha|\alpha_1^{n-1-i}\vee \xi_1^{n-1-i})(f^i(x))\Big]
=0.
\]

By Lemma~\ref{Lconvergence}, for almost every $x$, there is $N>0$
such that for all $i>N$, $(f\xi\vee f^i\eta)(f^i(x))=(f\xi)(f^i(x))$.
The equation is still true if the partition $f\xi$ is replaced
by finer ones. So we have that
\[
I_\mu(\alpha|\alpha_1^{n-1-i}\vee \xi_1^{n-1-i})(f^i(x))
=I_\mu(\alpha|\alpha_1^{n-1-i}\vee \xi_1^{n-1-i}\vee f^i\eta)(f^i(x))
\]
for all large $i$.  Therefore, we can get
\[
\lim_{n\to\infty}\frac{1}{n}\Big[I_\mu(\alpha|\xi\vee f^{n-1}\eta)(f^{n-1}(x))
+\sum_{i=0}^{n-2}I_\mu(\alpha|\alpha_1^{n-1-i}\vee \xi_1^{n-1-i}\vee f^i\eta)(f^i(x))\Big]
=0.
\]
By \eqref{fmain1} we get the result of the lemma.
\end{proof}

\begin{proof}[Proof of Theorem B: Upper Limits]

By Lemma~\ref{Lcond1}
\[
I_\mu(\a_0^{n-1}|\eta)(x)
\le I_\mu(\a_0^{n-1}\vee \xi_0^{n-1}|\eta)(x)
= I_\mu(\xi_0^{n-1}|\eta)(x)+ I_\mu(\a_0^{n-1}|\xi_0^{n-1}\vee\eta)(x).
\]
Hence, by Lemma~\ref{Lmain}, Lemma~\ref{LSMB1} and Theorem~A,
\begin{equation*}
\begin{split}
\limsup_{n\to \infty}\frac{1}{n}I_\mu(\a_0^{n-1}|\eta)(x)
\le \limsup_{n\to \infty}\frac{1}{n}I_\mu(\xi_0^{n-1}|\eta)(x)
=h_\mu(f,\xi)=h_\mu(f,\a|\eta).
\end{split}
\end{equation*}
We get the same bound for the upper limit.
\end{proof}

\begin{proof}[Proof of Corollary B.1]

Theorem B implies that
$\lim_{n\to \infty}\frac{1}{n}I_\mu(\a_0^{n-1}|\eta)(x)$ exists.
So Lemma~\ref{LSMB1.3} gives
\begin{equation}\label{fB1.1}
\lim_{n\to \infty}\frac{1}{n}I_\mu(\a_0^{n-1}|\eta)(x)
= \lim_{n\to \infty}\frac{1}{n}I_\mu(\a_0^{n-1}|\xi)(x)
\qquad \mu\text{-a.e.}\ x.
\end{equation}
By Theorem~B we have
\begin{equation}\label{fB1.2}
h_\mu(f, \a|\eta)
=\int \lim_{n\to \infty}\frac{1}{n}I_\mu(\a_0^{n-1}|\eta)d\mu.
\end{equation}
Hence by Fatou's lemma, \eqref{fB1.1} and \eqref{fB1.2},
\begin{equation}\label{fB1.3}
\begin{split}
h_\mu(f, \a|\xi)
\ge &\liminf_{n\to \infty} \frac{1}{n}H_\mu(\a_0^{n-1}|\xi)
\ge \int \liminf_{n\to \infty} \frac{1}{n}I_\mu(\a_0^{n-1}|\xi)d\mu \\
=  &\int \liminf_{n\to \infty} \frac{1}{n}I_\mu(\a_0^{n-1}|\eta)d\mu
=h_\mu(f, \a|\eta).
\end{split}
\end{equation}
On the other hand, since
\[
H_\mu(\a_0^{n-1}|\xi)
\le H_\mu(\a_0^{n-1}|\eta)+H_\mu(\eta|\xi),
\]
and $H_\mu(\eta|\xi)< \infty$, we have
\[
h_\mu(f, \a|\xi)
= \limsup_{n\to \infty} \frac{1}{n}H_\mu(\a_0^{n-1}|\xi)
\le \lim_{n\to \infty} \frac{1}{n}H_\mu(\a_0^{n-1}|\eta)
=h_\mu(f, \a|\eta).
\]
Together with \eqref{fB1.3}, we obtain $h_\mu(f, \a|\xi)=h_\mu(f, \a|\eta)$.
Now the conclusion of the corollary follows from
Theorem~B and \eqref{fB1.1}.
\end{proof}


\section{Unstable topological entropy}\label{SUTE}
\setcounter{equation}{0}

\subsection{Definition using spanning sets}

Recall that unstable topological entropy is defined in
Definition~\ref{Defutopent1} using $(n,\epsilon)$ u-separated sets.
We can also define unstable topological entropy by using
 $(n,\epsilon)$ u-spanning sets as follows.
A set $E \subset W^u(x)$ is called an \emph{$(n,\epsilon)$ u-spanning set}
of $\overline{W^u(x, \delta)}$ if
$\overline{W^u(x, \delta)} \subset \bigcup_{y\in E}B^u_{n}(y,\epsilon)$,
where $B^u_{n}(y,\epsilon)=\{z\in W^u(x): d^u_{n}(y,z) \leq\epsilon\}$
is the $(n,\epsilon)$ u-Bowen ball around $y$.
Let $S^u(f,\epsilon,n,x,\delta)$ be the cardinality of a minimal
$(n,\epsilon)$ u-spanning set of $\overline{W^u(x, \delta)}$.
It is standard to verify that
$$
N^u(f,2\epsilon,n,x,\delta) \leq S^u(f,\epsilon,n,x,\delta)
\leq N^u(f,\epsilon,n,x,\delta).
$$
So in Definition~\ref{Defutopent1} we can also use
$$
h^u_{\text{top}}(f, \overline{W^u(x,\delta)})
=\lim_{\epsilon \to 0}\limsup_{n\to \infty}\frac{1}{n}
\log S^u(f,\epsilon,n,x,\delta).
$$

The following lemma tells that in the definition we do not have to
let $\d\to 0$.

\begin{lemma}\label{smalldelta}
$h^u_{\text{top}}(f)=\sup_{x\in M}h^u_{\text{top}}(f, \overline{W^u(x,\delta)})$ for any $\delta >0$.
\end{lemma}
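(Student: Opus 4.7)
The plan is to show that the function $\delta \mapsto \sup_{x\in M} h^u_{\text{top}}(f, \overline{W^u(x,\delta)})$ is \emph{constant} in $\delta > 0$; since $h^u_{\text{top}}(f)$ is defined as the limit of this quantity as $\delta \to 0$, the lemma follows immediately. I would establish constancy by proving the two opposite monotonicity inequalities for any $0 < \delta_1 \le \delta_2$.

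One direction is easy and just uses inclusion. Since $\overline{W^u(x,\delta_1)} \subset \overline{W^u(x,\delta_2)}$, every $(n,\epsilon)$ u-separated subset of the smaller ball is also one of the larger, giving $N^u(f,\epsilon,n,x,\delta_1) \le N^u(f,\epsilon,n,x,\delta_2)$, and hence $\sup_x h^u_{\text{top}}(f, \overline{W^u(x,\delta_1)}) \le \sup_x h^u_{\text{top}}(f, \overline{W^u(x,\delta_2)})$.

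For the reverse direction I would pass to spanning sets (allowed by the estimate recalled just before the lemma). Fix $x\in M$. Since $W^u(x)$ is a leaf of a foliation of the compact manifold $M$, its induced Riemannian metric is complete, so by Hopf--Rinow $\overline{W^u(x,\delta_2)}$ is compact in $W^u(x)$. Cover it by finitely many balls $W^u(x_i,\delta_1)$ with $x_i\in \overline{W^u(x,\delta_2)}$ and $1\le i\le k(x)$. Taking the union of $(n,\epsilon)$ u-spanning sets of each $\overline{W^u(x_i,\delta_1)}$ produces an $(n,\epsilon)$ u-spanning set of $\overline{W^u(x,\delta_2)}$, so
\[
S^u(f,\epsilon,n,x,\delta_2) \le k(x)\cdot \max_{1\le i \le k(x)} S^u(f,\epsilon,n,x_i,\delta_1).
\]
Dividing by $n$, taking $\log$ and $\limsup$, and using that $\limsup_n$ commutes with a finite $\max$ as well as that $\frac{1}{n}\log k(x)\to 0$, I obtain
\[
\limsup_{n\to\infty}\frac{1}{n}\log S^u(f,\epsilon,n,x,\delta_2) \le \sup_{y\in M}\limsup_{n\to\infty}\frac{1}{n}\log S^u(f,\epsilon,n,y,\delta_1).
\]

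The last step requires passing $\epsilon \to 0$ and then taking $\sup_x$. The mildly delicate point is interchanging $\lim_{\epsilon \to 0}$ with $\sup_{y\in M}$; I would justify this by noting that the inner quantity is monotone non-increasing in $\epsilon$, so $\lim_{\epsilon\to 0}$ equals $\sup_{\epsilon>0}$, and two suprema commute. This yields $h^u_{\text{top}}(f, \overline{W^u(x,\delta_2)}) \le \sup_y h^u_{\text{top}}(f, \overline{W^u(y,\delta_1)})$, and taking $\sup_x$ completes the reverse inequality. Combined with the first step, this proves that $\sup_x h^u_{\text{top}}(f, \overline{W^u(x,\delta)})$ is independent of $\delta$, which is exactly the claim. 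The main (though minor) obstacle is bookkeeping the order of the three operations $\sup_x$, $\lim_{\epsilon\to 0}$, $\limsup_n$; the covering step itself is routine once compactness of $\overline{W^u(x,\delta_2)}$ in the leaf is observed.
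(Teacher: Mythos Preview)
Your proposal is correct and follows essentially the same route as the paper: both prove the nontrivial direction by covering $\overline{W^u(x,\delta)}$ with finitely many $\delta_1$-balls in the leaf and using the subadditivity of spanning numbers under finite unions. The only cosmetic difference is that the paper fixes an $\epsilon_0$ and uses a $\rho/3$ approximation to avoid the interchange of $\lim_{\epsilon\to 0}$ and $\sup_y$, whereas you handle that interchange directly via the monotonicity-in-$\epsilon$ observation (which is a clean and valid shortcut).
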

\begin{proof}
It is easy to see that $h^u_{\text{top}}(f)\leq \sup_{x\in M}h^u_{\text{top}}(f, \overline{W^u(x,\delta)})$ for any $\delta >0$ since $\delta \mapsto \sup_{x\in M}h^u_{\text{top}}(f, \overline{W^u(x,\delta)})$ is increasing.

Let us prove the other direction for some fixed $\delta >0$. For any $\rho >0$, let $y \in M$ be such that
\begin{equation}\label{e:anyradius1}
\sup_{x\in M}h^u_{\text{top}}(f, \overline{W^u(x,\delta)})\leq h^u_{\text{top}}(f, \overline{W^u(y,\delta)})+\frac{\rho}{3}.
\end{equation}
We can choose $\epsilon_0>0$ such that
\begin{equation}\label{e:anyradius2}
\begin{aligned}
h^u_{\text{top}}(f, \overline{W^u(y,\delta)})&=\lim_{\epsilon \to 0}\limsup_{n\to \infty}\frac{1}{n}\log S^u(f,\epsilon,n,y,\delta) \\ &\leq \limsup_{n\to \infty}\frac{1}{n}\log S^u(f,\epsilon_0,n,y,\delta)+\frac{\rho}{3}.
\end{aligned}
\end{equation}
Choose $\delta_1>0$ small enough such that $\delta_1<\delta$ and
\begin{equation}\label{e:anyradius3}
h^u_{\text{top}}(f)\geq\sup_{x\in M}h^u_{\text{top}}(f, \overline{W^u(x,\delta_1)})-\frac{\rho}{3}.
\end{equation}
Then there exist $y_j \in \overline{W^u(y,\delta)}, 1\leq j\leq N$ where $N$ only depends on $\delta$, $\delta_1$, and the Riemannian structure on $\overline{W^u(y,\delta)}$, such that
\begin{equation*}
\overline{W^u(y,\delta)} \subset \bigcup_{j=1}^N\overline{W^u(y_j, \delta_1)}.
\end{equation*}
It follows that
\begin{equation}\label{e:anyradius4}
\begin{aligned}
&\limsup_{n\to \infty}\frac{1}{n}\log S^u(f,\epsilon_0,n,y,\delta)
\leq \limsup_{n\to \infty}\frac{1}{n}
   \log   \left(\sum_{j=1}^N S^u(f,\epsilon_0,n,y_i,\delta_1)\right)\\
\leq &\limsup_{n\to \infty}\frac{1}{n}
   \log N S^u(f,\epsilon_0,n,y_i,\delta_1)
=\limsup_{n\to \infty}\frac{1}{n}
  \log S^u(f,\epsilon_0,n,y_i,\delta_1) \\
\leq &\lim_{\epsilon \to 0}\limsup_{n\to \infty}\frac{1}{n}
   \log S^u(f,\epsilon,n,y_i,\delta_1)
 = h^u_{\text{top}}(f, \overline{W^u(y_i,\delta_1)})\\
\end{aligned}
\end{equation}
for some  $1\leq i \leq N$. Combining \eqref{e:anyradius1}, \eqref{e:anyradius2}, \eqref{e:anyradius4} and \eqref{e:anyradius3},
\begin{equation*}
\begin{aligned}
&\sup_{x\in M}h^u_{\text{top}}(f,\overline{W^u(x,\delta)})
\leq h^u_{\text{top}}(f, \overline{W^u(y,\delta)})+\frac{\rho}{3} \\
\leq&\limsup_{n\to \infty}\frac{1}{n}\log S^u(f,\epsilon_0,n,y,\delta)
   +\frac{2\rho}{3}
 \leq h^u_{\text{top}}(f, \overline{W^u(y_i,\delta_1)})+\frac{2\rho}{3}\\
\leq &\sup_{x\in M}h^u_{\text{top}}(f, \overline{W^u(x,\delta_1)})+\frac{2\rho}{3}
 \leq h^u_{\text{top}}(f)+\rho.  
\end{aligned}
\end{equation*}
Since $\rho>0$ is arbitrary, one has
$\sup_{x\in M}h^u_{\text{top}}(f, \overline{W^u(x,\delta)})\leq h^u_{\text{top}}(f)$.
\end{proof}

\subsection{Definition using open covers}
We proceed to define the unstable topological entropy by using open covers. Let $\mathcal{C}_M^o$ denote the set of open covers of $M$. Given $\mathcal{U}\in \mathcal{C}_M^o$, denote $\mathcal{U}_m^n:=\bigvee_{i=m}^n f^{-i}\mathcal{U}$. For any $K \subset M$, set $N(\mathcal{U}|K):=\min\{\text{the cardinality of\ }\mathcal{V}: \mathcal{V}\subset \mathcal{U}, \bigcup_{V\in \mathcal{V}}\supset K\}$ and $H(\mathcal{U}|K):=\log N(\mathcal{U}|K)$.

\begin{definition}\label{Defutopent2}
We define
$$
\tilde{h}^u_{\text{top}}(f)=\lim_{\delta \to 0}
\sup_{x\in M}\tilde{h}^u_{\text{top}}(f, \overline{W^u(x,\delta)}),
$$
where
$$
\tilde{h}^u_{\text{top}}(f,  \overline{W^u(x,\delta)})
=\sup_{\mathcal{U}\in \mathcal{C}_M^o}\limsup_{n\to \infty}
\frac{1}{n}H(\mathcal{U}_{0}^{n-1}|\overline{W^u(x,\delta)}).$$
\end{definition}

\begin{remark}
It is easy to see that $H(\mathcal{U}|\overline{W^u(x,\delta)})=H(f^{-1}\mathcal{U}|f^{-1}(\overline{W^u(x,\delta)}))$.
But we don't know whether the sequence $H(\mathcal{U}_{0}^{n-1}|\overline{W^u(x,\delta)})$ is subadditive or not, and so we use $\limsup$ in the definition above. That is the main difference from the case for classical topological entropy.
\end{remark}
Now we verify that the two definitions in Definition \ref{Defutopent1} and \ref{Defutopent2} for unstable topological entropy coincide.

\begin{lemma}\label{cover}
Let $\delta >0$ be small enough. Then there exists a constant $C>1$ such that for any $\epsilon>0$ small enough, any $\mathcal{U}_\epsilon\in \mathcal{C}_M^o$ with Lebesgue number $2\epsilon$, and any $\mathcal{V}_\epsilon\in \mathcal{C}_M^o$ with $\text{diam}(\mathcal{V}_\epsilon) \leq \dfrac{\epsilon}{C}$,
$$N((\mathcal{U}_\epsilon)_0^{n-1}|\overline{W^u(x,\delta)})\leq S^u(f,\epsilon,n,x,\delta) \leq N^u(f,\epsilon,n,x,\delta) \leq N((\mathcal{V}_\epsilon)_0^{n-1}|\overline{W^u(x,\delta)}).$$
\end{lemma}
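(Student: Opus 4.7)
The plan is to prove the three inequalities separately; only the third requires nontrivial geometric input. For the first, one takes a minimal $(n,\epsilon)$ u-spanning set $E$ of $\overline{W^u(x,\delta)}$; since $d\le d^u$ on each unstable leaf, $B^u_n(y,\epsilon)\subset B_n(y,\epsilon)$ for every $y\in E$. The Lebesgue number hypothesis on $\mathcal U_\epsilon$ then supplies, for each $0\le j\le n-1$, some $U^{(y)}_j\in\mathcal U_\epsilon$ with $B(f^j(y),\epsilon)\subset U^{(y)}_j$, so that $B^u_n(y,\epsilon)\subset\bigcap_{j=0}^{n-1}f^{-j}U^{(y)}_j\in(\mathcal U_\epsilon)_0^{n-1}$; the collection of these intersections is a subcover of $\overline{W^u(x,\delta)}$ of cardinality $|E|$. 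The middle inequality is standard, since a maximal $(n,\epsilon)$ u-separated set is automatically $(n,\epsilon)$ u-spanning.

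For the third inequality I would show that, once $C$ is chosen appropriately, no element of $(\mathcal V_\epsilon)_0^{n-1}$ contains two points of any $(n,\epsilon)$ u-separated set $F\subset\overline{W^u(x,\delta)}$, so that any minimal subcover has cardinality at least $|F|$. Suppose $y,z\in F$ both lie in $V'=V_0\cap f^{-1}V_1\cap\cdots\cap f^{-(n-1)}V_{n-1}$; then $d(f^jy,f^jz)\le\diam(\mathcal V_\epsilon)\le\epsilon/C$ for every $j$. I will prove by induction on $j$ that $d^u(f^jy,f^jz)\le\epsilon$, contradicting u-separation. For the base case, $d^u(y,z)\le 2\delta$, so if $\delta$ is small enough the points lie on a common local unstable manifold where a uniform comparison $d^u\le C_0\,d$ is available; choosing $C\ge 2C_0$ gives $d^u(y,z)\le C_0\,\epsilon/C\le\epsilon/2$. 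For the inductive step, $d^u(f^{j-1}y,f^{j-1}z)\le\epsilon$ together with the leafwise Lipschitz constant $\lambda_{\max}:=\sup_{x\in M}\|Df|_{E^u_x}\|$ yields $d^u(f^jy,f^jz)\le\lambda_{\max}\epsilon$; if $\epsilon$ is small enough that $\lambda_{\max}\epsilon<r_0$, the two images again lie on a common local plaque and the local comparison reproduces $d^u(f^jy,f^jz)\le C_0\,\epsilon/C\le\epsilon/2$.

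The main obstacle is to establish the uniform local Lipschitz comparison $d^u(p,q)\le C_0\,d(p,q)$ valid whenever $p,q$ lie on a common local unstable manifold of some uniform size $r_0>0$. Such a comparison follows from the fact that local unstable manifolds are $C^1$-graphs over $E^u$ with uniformly bounded derivatives, a consequence of the continuity of the unstable distribution on the compact manifold $M$ together with the invariant-manifold theory of \cite{HPS}; this is essentially the content of Lemma~1.1 in \cite{SX}. Once this comparison is in hand, fixing $\delta$ so that $2\delta<r_0$, taking $C:=2C_0$, and restricting to $\epsilon<r_0/\lambda_{\max}$ closes the inductive argument in the previous paragraph and completes the proof.
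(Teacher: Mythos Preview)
Your proof is correct and follows essentially the same approach as the paper's, which simply records the uniform local comparison $d(y,z)\le d^u(y,z)\le C\,d(y,z)$ on $\overline{W^u(x,\delta)}$ and then refers to the standard argument in Walters' Theorem~7.7. You have spelled out the details the paper leaves implicit, in particular the inductive bootstrap on $j$ needed to keep $f^jy,f^jz$ within the range where the comparison $d^u\le C_0\,d$ applies---this step is genuinely required (since a priori forward iterates could separate in $d^u$ while staying $d$-close), and your handling of it via the bound $d^u(f^jy,f^jz)\le\lambda_{\max}\epsilon<r_0$ is exactly right.
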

\begin{proof}
Observe that for $\delta >0$ small enough, there exists $C>1$ such that for any $x\in M$,
$$d(y,z) \leq d^u(y,z)\leq Cd(y,z) \text{\ \ for any }y,z \in \overline{W^u(x,\delta)}.$$
Then the lemma follows by a similar argument as in the proof of Theorem 7.7 in \cite{W}.
\end{proof}

\begin{corollary}
$\tilde{h}^u_{\text{top}}(f, \overline{W^u(x,\delta)})
=h^u_{\text{top}}(f, \overline{W^u(x,\delta)})$.
As a consequence,
$$\tilde{h}^u_{\text{top}}(f)=h^u_{\text{top}}(f).$$
\end{corollary}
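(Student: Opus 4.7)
The plan is to derive both inequalities $\tilde{h}^u_{\text{top}}(f, \overline{W^u(x,\delta)}) \le h^u_{\text{top}}(f, \overline{W^u(x,\delta)})$ and its reverse directly from the sandwich inequality in Lemma~\ref{cover}, then take $\sup_{x\in M}$ and $\lim_{\delta\to 0}$ to transfer the equality to the global quantities. The essential point is that Lemma~\ref{cover} relates an open-cover count to the separated/spanning set counts at matching scales, so one just has to match the sup over open covers with the $\epsilon\to 0$ limit appropriately.

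For the inequality $\tilde{h}^u_{\text{top}}(f, \overline{W^u(x,\delta)}) \le h^u_{\text{top}}(f, \overline{W^u(x,\delta)})$, I would fix an arbitrary open cover $\mathcal{U}\in \mathcal{C}_M^o$ and let $2\epsilon_0$ be a Lebesgue number for $\mathcal{U}$, produced by compactness of $M$. Then Lemma~\ref{cover} (applied with this $\mathcal{U}$ playing the role of $\mathcal{U}_{\epsilon_0}$) gives $N(\mathcal{U}_0^{n-1}|\overline{W^u(x,\delta)}) \le S^u(f,\epsilon_0,n,x,\delta)$. Taking logs, dividing by $n$, passing to $\limsup_{n\to\infty}$, and then using that $\epsilon\mapsto \limsup_n \frac{1}{n}\log S^u(f,\epsilon,n,x,\delta)$ is monotone in $\epsilon$, I obtain
\[
\limsup_{n\to\infty}\tfrac{1}{n}H(\mathcal{U}_0^{n-1}|\overline{W^u(x,\delta)})
\;\le\; \lim_{\epsilon\to 0}\limsup_{n\to\infty}\tfrac{1}{n}\log S^u(f,\epsilon,n,x,\delta)
\;=\; h^u_{\text{top}}(f, \overline{W^u(x,\delta)}).
\]
Taking the supremum over $\mathcal{U}\in \mathcal{C}_M^o$ yields the desired bound.

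For the reverse inequality, I would fix $\epsilon>0$ small, choose an open cover $\mathcal{V}_\epsilon\in \mathcal{C}_M^o$ with $\mathrm{diam}(\mathcal{V}_\epsilon)\le \epsilon/C$, and apply the right-hand inequality from Lemma~\ref{cover}: $N^u(f,\epsilon,n,x,\delta)\le N((\mathcal{V}_\epsilon)_0^{n-1}|\overline{W^u(x,\delta)})$. Taking logs, dividing by $n$, and passing to $\limsup_{n\to\infty}$ gives
\[
\limsup_{n\to\infty}\tfrac{1}{n}\log N^u(f,\epsilon,n,x,\delta)
\;\le\; \limsup_{n\to\infty}\tfrac{1}{n}H((\mathcal{V}_\epsilon)_0^{n-1}|\overline{W^u(x,\delta)})
\;\le\; \tilde{h}^u_{\text{top}}(f, \overline{W^u(x,\delta)}),
\]
where the last inequality is just the definition of $\tilde h^u_{\text{top}}$ as a supremum over open covers. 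Letting $\epsilon\to 0$ produces $h^u_{\text{top}}(f, \overline{W^u(x,\delta)}) \le \tilde h^u_{\text{top}}(f, \overline{W^u(x,\delta)})$. Combining the two inequalities gives the pointwise equality. To finish, I would take $\sup_{x\in M}$ (this is the same operation on both sides) and then $\lim_{\delta\to 0}$ to conclude $\tilde{h}^u_{\text{top}}(f)=h^u_{\text{top}}(f)$.

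The only subtlety is bookkeeping: making sure the Lebesgue-number step is legitimate for every open cover (which requires only compactness of $M$) and keeping the directions of monotonicity straight when converting $\sup_{\mathcal{U}}$ on one side to $\lim_{\epsilon\to 0}$ on the other. There is no serious obstacle here since Lemma~\ref{cover} was stated precisely so that the matching is clean; the argument is essentially the standard proof that the separated-set and open-cover versions of topological entropy coincide (as in Theorem~7.7 of \cite{W}), but carried out uniformly on the unstable disk $\overline{W^u(x,\delta)}$ rather than on the whole manifold.
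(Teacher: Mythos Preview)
Your proposal is correct and is exactly the intended argument: the paper states this corollary without proof, leaving it as an immediate consequence of Lemma~\ref{cover} via the standard Walters-style matching of open-cover counts with separated/spanning counts. Your write-up simply makes explicit the two-inequality deduction the authors omit.
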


\subsection{Proof of Theorem C: relation to unstable volume growth}
In this subsection, we prove Theorem C, which states that
the unstable topological entropy actually coincides with the unstable
volume growth defined in \cite{HSX}.
The notation $\chi_u(f)$ for unstable volume growth is used in \cite{HSX}.

\proof [Proof of Theorem C]
Choose a small $\delta >0$. By the definition of $\chi_u(f)$,
for any given $\rho >0$, there exists a point $x$ such
that
\[
\chi_u(x, \delta)\geq \chi_u(f)-\rho.
\]
For $\ep >0$, let $E$ be a minimal $(n,\ep)$ u-spanning set of
$\overline{W^u(x,\delta)}$, then $f^n(E)$ is an $\ep$-spanning set of
$f^n(\overline{W^u(x,\delta)})$. Thus $f^n(W^u(x,\delta))\subset \bigcup_{y\in
f^n(E)}W^u(y,\ep)$. The volume of any $\ep$ u-ball is bounded from above by $ c_1
\ep^k$, where $c_1>0$ is a constant depending on the Riemannian
metric and $k$ is the dimension of the unstable manifolds. Then the
total volume covered by the $\ep$ u-balls around the points in
$f^n(E)$ is less than $c_1\ep^kS^u(f,\epsilon,n,x,\delta)$.
Therefore
$$\text{Vol}(f^n(W^u(x,\delta)))\leq c_1
\ep^kS^u(f,\epsilon,n,x,\delta).$$
We get
\begin{equation*}
\begin{aligned}
 h_{\text{top}}^u(f,\overline{W^u(x,\delta)})
 &=\lim_{\ep\to 0} \limsup_{n \to \infty} \frac{1}{n} \log
S^u(f,\ep,n,x,\delta) \\
&\geq \lim_{\ep\to 0} \limsup_{n \to \infty} \frac{1}{n}
\log(\text{Vol}(f^n(W^u(x,\delta)))/(c_1
\ep^k))\\
&=\chi_u(x, \delta)\geq \chi_u(f) - \rho.
\end{aligned}
\end{equation*}
Since $\rho>0$ is arbitrary, $$h_\text{top}^u(f) =\sup_{x\in M}
h_{\text{top}}^u(f,\overline{W^u(x,\delta)})\geq \chi_u(f).$$

On the other hand, for any given $\rho>0$, by the definition of
$h_{\text{top}}^u(f)$, there exist a point $x$ and $0<\ep_0<\delta$ such that
\[
\limsup_{n\to \infty}\frac{1}{n}N^u(f,\ep_0,n,x,\delta)>h_\text{top}^u(f)-\rho.
\]
 Let $F\subset \overline{W^u(x,\delta)}$ be an $(n,\ep_0)$ separated
set, then $f^n(F)$ is $\ep_0$ separated. The volume of any $\ep_0/2$ u-ball can be bounded from below by $c_2\ep_0^k$,
where $c_2>0$ is a constant depending on the Riemannian metric.
Since the $\ep_0/2$ u-balls around the points in $f^n(F)$ are
disjoint subsets of $f^n(W^u(x,\delta+\ep_0))$, we get
$$\text{Vol}(f^n(W^u(x,\delta+\ep_0)))\geq c_2\ep_0^kN^u(f,\ep_0,n,x,\delta).$$
Therefore,
\begin{equation*}
 \begin{split}
\chi_u(f)
\geq \chi_u(x,\delta+\ep_0)
= &\limsup_{n \to\infty} \frac{1}{n}\log\text{Vol}(f^n(W^u(x,\delta+\ep_0)))\\
\geq &\limsup_{n \to \infty} \frac{1}{n}
   \log\left( c_2 \ep_0^kN^u(f,\ep_0,n,x,\delta) \right)
  >h_{\text{top}}^u(f) - \rho.
\end{split}
\end{equation*}
Since $\rho>0$ is arbitrary, $$\chi_u(f)\geq h_{\text{top}}^u(f).$$
This completes the proof.\qed
\medskip

\begin{proof}[Proof of Corollary C.1]
The inequality $h_\top^u(f)\le h_\top(f)$ follows from the definition directly.

If there is no positive Lyapunov exponents in the center direction, then by
\eqref{fineqHSX} and Theorem C, we also have
$h_\top(f)\le \chi^u(f) = h_\top^u(f)$.
\end{proof}

\begin{proof}[Proof of Corollary C.2]
Clearly for any invariant measure $\mu$,
$\sum_{\lambda_i^c>0}\lambda_i^c \le \sigma$.
By Theorem C, \eqref{fineqHSX} implies
\[
h_\mu(f)\leq h_\top^u(f)+\sigma.
\]
Then we use the variational principle for entropy.
\end{proof}

\begin{proof}[Proof of Corollary C.3]
For any $x\in M$, $\d>0$, denote
\begin{equation*}
h_{\text{top}}(f, \overline{B(x,\delta)})
=\lim_{\epsilon \to 0}\limsup_{n\to \infty}\frac{1}{n}\log N(f,\epsilon,n,x,\delta),
\end{equation*}
where $N(f,\epsilon,n,x,\delta)$ denote the maximal number of points
in $\overline{B(x,\delta)}$ with pairwise $d_{n}$-distances
at least $\epsilon$.  By using a finite cover argument
we know that for any $\d>0$, there is an $x\in M$ such that
\[
h_{\text{top}}(f, \overline{B(x,\delta)})
=\lim_{\epsilon \to 0}\limsup_{n\to \infty}\frac{1}{n}\log N(f,\epsilon,n,M),
\end{equation*}
where $N(f,\epsilon,n,M)$ denote the maximal number of points
in $M$ with pairwise $d_{n}$-distances at least $\epsilon$.
Hence we can get that
\[
h_\top(f)
=\lim_{\delta \to 0}\sup_{x\in M}h_{\text{top}}(f, \overline{B(x,\delta)}).
\]

By the definition of unstable and transversal topological entropies,
Definition~\ref{Defutopent1} and Definition~\ref{Deftransent}, we have
\[
h_{\text{top}}(f, \overline{B(x,\delta)})
\le h_{\text{top}}^u(f, \overline{B(x,\delta)})
+h_{\text{top}}^t(f, \overline{B(x,\delta)}).
\]
Then taking supremum over $x\in M$ and letting $\delta$ go to $0$,
we get the inequality.
\end{proof}

\section{The variational principle}\label{SVP}
\setcounter{equation}{0}

In this section, we prove Theorem~D, the variational principle
for unstable entropies $h^u_{\text{top}}(f)$ and $h_{\mu}^u(f)$.


At first, we prove one direction of the variational principle as follows.

\begin{proposition}\label{variationalprinciple1}
Let $\mu$ be any $f$-invariant probability measure. Then
$$h_{\mu}^u(f)\leq h^u_{\text{top}}(f).$$
\end{proposition}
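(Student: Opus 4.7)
The approach follows Misiurewicz's proof of the classical variational principle, adapted to the unstable setting. By Theorem~A and Corollary~A.2, it suffices to show $h_\mu(f,\alpha|\eta)\le h^u_{\top}(f)$ for an arbitrary fixed $\alpha\in\P$ and $\eta\in\P^u$. Given $\tau>0$, by inner regularity of $\mu$ I first approximate $\alpha=\{A_1,\ldots,A_k\}$ by a partition $\beta=\{B_0,B_1,\ldots,B_k\}$ with $B_i\subset A_i$ compact for $i\ge 1$, pairwise at $d$-distance at least $2\gamma>0$, $B_0=M\setminus\bigcup_{i\ge1}B_i$, and $\mu(B_0)$ so small that $H_\mu(\alpha|\beta)<\tau$. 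A standard use of Lemma~\ref{Lcond1} then gives $h_\mu(f,\alpha|\eta)\le h_\mu(f,\beta|\eta)+\tau$, reducing the problem to estimating $h_\mu(f,\beta|\eta)$.

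The key ingredient is the pointwise bound $H_\mu(\beta_0^{n-1}|\eta)=\int H_{\mu_x^\eta}(\beta_0^{n-1})\,d\mu(x)\le\int\log N_n^\beta(x)\,d\mu(x)$, where $N_n^\beta(x)$ counts atoms of $\beta_0^{n-1}$ meeting $\eta(x)$. Fix $\delta_0>0$ larger than the uniform $d^u$-diameter of the elements of $\eta$, so that $\eta(x)\subset\overline{W^u(x,\delta_0)}$ for every $x$. Taking a minimal $(n,\gamma)$ u-spanning set $F_x$ of $\overline{W^u(x,\delta_0)}$, each u-Bowen ball $B^u_n(y,\gamma)$ with $y\in F_x$ meets at most $2^n$ atoms of $\beta_0^{n-1}$: at each time $0\le i\le n-1$, the image $f^i(B^u_n(y,\gamma))$ has ambient $d$-diameter at most $2\gamma$ and so meets at most one of $B_1,\ldots,B_k$ (by the separation) together with possibly $B_0$. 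Therefore $N_n^\beta(x)\le 2^n\,S^u(f,\gamma,n,x,\delta_0)$, giving
\[
\tfrac{1}{n}H_\mu(\beta_0^{n-1}|\eta)\le\log 2+\tfrac{1}{n}\int\log S^u(f,\gamma,n,x,\delta_0)\,d\mu(x).
\]
Since $\frac{1}{n}\log S^u(f,\gamma,n,x,\delta_0)$ is bounded above uniformly in $x$ and $n$ (by $\log L+o(1)$, where $L=\sup\|\det Df|_{E^u}\|<\infty$), the reverse Fatou lemma together with the monotonicity of $S^u$ in $\gamma$ yields $h_\mu(f,\beta|\eta)\le\log 2+\int h^u_{\top}(f,\overline{W^u(x,\delta_0)})\,d\mu(x)\le\log 2+h^u_{\top}(f)$ by Lemma~\ref{smalldelta}. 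Combining with the first step gives $h^u_\mu(f)\le\log 2+h^u_{\top}(f)+\tau$.

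The spurious $\log 2$ is removed by the Misiurewicz trick: apply the whole estimate to $f^m$ in place of $f$ (the partitions $\alpha,\beta,\eta$ remain admissible since $\P$ and $\P^u$ are defined without reference to the dynamics). This yields $h^u_\mu(f^m)\le\log 2+h^u_{\top}(f^m)+\tau$; dividing by $m$ and invoking the multiplicativities $h^u_\mu(f^m)=m\,h^u_\mu(f)$ and $h^u_{\top}(f^m)=m\,h^u_{\top}(f)$, then letting $m\to\infty$, yields $h^u_\mu(f)\le h^u_{\top}(f)$. The topological multiplicativity follows from Theorem~C together with $\chi_u(f^m)=m\chi_u(f)$, which holds because consecutive values of $\frac{1}{n}\log\Vol(f^n W^u(x,\delta))$ differ by at most $\frac{\log L}{n}$ and so the $\limsup$ is attained along arithmetic progressions as well as along $\mathbb{N}$. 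The metric multiplicativity follows from Corollary~A.2 applied along the subsequence $N=mn$ with the (possibly coarser-than-$\P$) partition $\alpha_0^{m-1}$, together with the observation that the proof of Lemma~\ref{Lcond}(ii) extends to arbitrary finite measurable partitions via backward contraction on unstable leaves. The main technical hurdle will be the reverse Fatou exchange: it requires uniform $L^1$-domination of $\tfrac{1}{n}\log S^u(f,\gamma,n,x,\delta_0)$, which rests on the uniform bound on unstable Jacobians supplied by $C^1$ partial hyperbolicity.
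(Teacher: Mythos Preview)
Your argument is correct and follows a genuinely different route from the paper. The paper's proof first reduces to ergodic measures via the ergodic decomposition (using Propositions~\ref{Paffine} and~\ref{Pusc}), and then for ergodic $\mu$ invokes the Brin--Katok type pointwise statement of Corollary~\ref{Cpointwise}, namely $-\frac{1}{n}\log\mu_x^\eta(B^u_n(x,\epsilon))\to h_\mu(f|\eta)$ for $\mu$-a.e.\ $x$; a direct spanning-set count on a single well-chosen leaf $\eta(x)$ then gives $h^u_{\top}(f,\overline{W^u(x,\delta)})\ge h_\mu^u(f)-\rho$ in one shot, with no $\log 2$ loss and no power trick.

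Your approach, by contrast, avoids both the ergodic decomposition and the pointwise local-entropy formula (which the paper imports from Ledrappier--Young via Lemma~\ref{LSMB1.1}), working instead with the classical Misiurewicz machinery applied leafwise. This is more self-contained, but you pay for it with the power trick, which rests on the multiplicativities $h_\mu^u(f^m)=m\,h_\mu^u(f)$ and $h_{\top}^u(f^m)=m\,h_{\top}^u(f)$. Your sketches for these are adequate; one small remark: $\alpha_0^{m-1}$ is a \emph{refinement} of $\alpha$, hence already has diameter $\le\e_0$ and lies in $\P$, so the extension of Lemma~\ref{Lcond}(ii) you flag is not needed. In summary, the paper's route buys a cleaner one-step estimate at the cost of invoking Corollary~\ref{Cpointwise}, while yours buys independence from that analytic input at the cost of the iteration step.
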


\begin{proof}
Let $\mu=\int_{\mathcal{M}^e_f(M)}\nu d\tau(\nu)$ be the unique ergodic decomposition
where $\tau$ is a probability measure on the Borel subsets of $\mathcal{M}_f(M)$ and $\tau(\mathcal{M}^e_f(M))=1$.
Since $\mu \mapsto h_\mu^u(f)$ is affine and upper semi-continuous by Propositions \ref{Paffine} and \ref{Pusc}, then
\begin{equation}\label{e:ergodiccomponent}
h_\mu^u(f)=\int_{\mathcal{M}^e_f(M)}h_\nu^u(f) d\tau(\nu)
\end{equation}
by a classical result in convex analysis (cf. Fact A.2.10 on p. 356 in \cite{Do}). Therefore, we only need to prove the proposition for
ergodic measures.

Suppose $\mu$ is ergodic. Let $\rho>0$. Take $\eta\in \P^u$ subordinate to unstable manifolds, and take $\e>0$.
By Corollary~\ref{Cpointwise}, we have
\begin{equation*}
\lim_{n \to \infty}-\frac{1}{n}\log\mu_y^\eta(B_{n}^u(y,\e))
\geq h_\mu^u(f|\eta)  \qquad \mu-\text{a.e.}\ y.
\end{equation*}
Hence for $\mu$-a.e. $y$, there exists $N(y)=N(y,\e)>0$ such that if
$n\geq N(y)$, then
\begin{equation*}
\mu_y^\eta(B_{n}^u(y,\e)) \leq e^{-n(h_\mu^u(f|\eta)-\rho)}.
\end{equation*}
Denote $E_n=E_n(\e)=\{y\in M: N(y)=N(y,\e)\leq n\}$.
Then $\mu\big(\cup_{n=1}^\infty E_n\big)=1$ by the corollary.
So there exists $n>0$ large enough such that $\mu(E_n)>1-\rho$.
Hence, there exists $x\in M$ such that
$\mu_x^\eta(E_n)=\mu_x^\eta(E_n\cap \eta(x))>1-\rho$.
Fix such $n$ and $x$.
Note that if $y \in \eta(x)$, then $\mu_y^\eta=\mu_x^\eta$.
We have
\begin{equation*}
\mu_x^\eta(B_{n}^u(y,\e)) \leq e^{-n(h_\mu(f|\eta)-\rho)} \qquad
\forall y\in E_n\cap \eta(x).
\end{equation*}

Let $S^u(f,\e,n,\eta(x))$ be the cardinality of a minimal
$(n,\e)$ u-spanning set of $\overline{\eta(x)}$.
Then there exists a set $S\subset \eta(x)$ with cardinality no more than
$S^u(f,\e/2,n,\eta(x))$ such that
\begin{equation*}
\begin{aligned}
\eta(x)\cap E_n \subset &\bigcup_{z\in S}B_{n}^u(z,\e/2).
\end{aligned}
\end{equation*}
and $B_{n}^u(z,\e/2)\cap E_n \neq \emptyset$.
Let $y(z)$ be an arbitrary point in $B_{n}^u(z,\e/2)\cap E_n$. We have
\begin{equation*}
\begin{split}
1-\rho&<\mu_x^\eta(\eta(x)\cap E_n)
\leq \mu_x^\eta(\bigcup_{z\in S}B_{n}^u(z,\e/2))
\leq \sum_{z\in S}\mu_x^\eta(B_{n}^u(z,\e/2))\\
&\leq \sum_{z\in S}\mu_x^\eta(B_{n}^u(y(z),\e))
\leq S^u(f,\e/2,n,\eta(x))e^{-n(h_\mu(f|\eta)-\rho)}.
\end{split}
\end{equation*}
Hence $S^u(f,\e/2,n,\eta(x)) \geq (1-\rho)e^{n(h_\mu(f|\eta)-\rho)}$.

Now we take $\delta>0$ such that with $W^u(x, \delta)\supset \eta(x)$.
Recall that $S^u(f,\e,n,x,\delta)$ denotes the cardinality of a minimal
$(n,\e)$ u-spanning set of $\overline{W^u(x, \delta)}$.
Clearly  $S^u(f,\e,n,x,\delta)\ge S^u(f,\e,n, \eta(x))$.
So we get
$S^u(f,\e/2,n,x,\delta) \geq (1-\rho)e^{n(h_\mu(f|\eta)-\rho)}$.
It follows that
$$h^u_{\text{top}}(f,\overline{W^u(x,\delta)}) \geq h_\mu(f|\eta)-\rho.$$
Then by definition,
$$h^u_{\text{top}}(f) \geq h^u_{\text{top}}(f,\overline{W^u(x,\delta)})
\geq h_\mu(f|\eta)-\rho
= h^u_\mu(f)-\rho.$$
Since $\rho$ is arbitrary, we have
\[
h^u_{\text{top}}(f) \geq h_\mu^u(f).  \qedhere
\]
\end{proof}

Next we use the ideas in Misiurewicz's proof of the classical variational
 principle (\cite{Mis2}) to prove Theorem~D.

\begin{proof}[Proof of Theorem~D]
By Proposition~\ref{variationalprinciple1}, it is enough to prove
that for any $\rho>0$, there exists $\mu \in \mathcal{M}_f(M)$
such that $h_{\mu}^u(f)\geq h^u_{\text{top}}(f)-\rho$.

For some $\delta>0$ small enough, we can find a point $x\in M$ such that
$$h^u_{\text{top}}(f, \overline{W^u(x,\delta)})\geq h^u_{\text{top}}(f)-\rho.$$
Take $\e>0$ small enough.
Let $S_n$ be an $(n,\e)$ u-separated set of $\overline{W^u(x,\delta)}$
with cardinality $N^u(f,\e,n,x,\delta)$.
Define
$$\nu_n:=\frac{1}{N^u(f,\e,n,x,\delta)}\sum_{y\in S_n}\delta_y,$$
and
$$\mu_n:=\frac{1}{n}\sum_{i=0}^{n-1}f^i\nu_n.$$
Since the set $\mathcal{M}(M)$ of all probability measures of $M$
is a compact space with weak* topology, there exists a subsequence $\{n_k\}$
of natural numbers such that $\lim_{k\to \infty}\mu_{n_k}=\mu$.
Obviously $\mu \in \mathcal{M}_f(M)$.

As $\delta$ is very small, we can choose a partition $\eta\in\P^u$ such that $W^u(x,\delta)\subset \eta(x)$.
That is, $W^u(x,\delta)$ is contained in a single element of $\eta$.
Then choose $\a\in \P$ such that $\mu(\partial\alpha)=0$,
and $\text{diam}(\alpha) < \frac{\e}{C}$
where $C>1$ is as in the proof of Lemma~\ref{cover}.
Hence we have $\log N^u(f,\e,n,x,\delta)=H_{\nu_n}(\alpha_0^{n-1}|\eta)$.

Fix a natural numbers $q>1$.  For any natural number $n>q$,
$j=0, 1, \cdots, q-1$, put $a(j)=[\frac{n-j}{q}]$, where
$[a]$ denotes the integer part of $a>0$.
Then
$$\bigvee_{i=0}^{n-1}f^{-i}\alpha
=\bigvee_{r=0}^{a(j)-1}f^{-(rq+j)}\alpha_0^{q-1} \vee \bigvee_{t\in S_j}f^{-t}\alpha,$$
where $S_j=\{0,1,\cdots, j-1\}\cup \{j+qa(j), \cdots, n-1\}$.

For a partition $\a\in \P$, denote by $\a^u$ the partition in $\P^u$
whose elements are given by $\a^u(x)=\a(x)\cap W^u_\loc(x)$. Note that
\[
f^{rq}\big(\bigvee_{i=0}^{r-1}f^{-iq}\alpha_0^{q-1}\vee f^j\eta\big)
=f^{rq}\big(\alpha_0^{rq-1}\vee f^j\eta\big)
=f\a\vee \cdots \vee f^{rq}\a\vee f^{rq+j}\eta
\ge f\a^u.
\]
Note also that the same arguments as for Lemma~\ref{Linduction}(i)
can be applied for any probability measure $\nu$ that is not necessary
invariant. We can get that
\begin{equation}\label{fVP1}
\begin{aligned}
&H_{\nu}(\bigvee_{r=0}^{a(j)-1}f^{-rq}\alpha_0^{q-1}|f^j\eta)\\
= &H_{\nu}(\alpha_0^{q-1}|f^j\eta)
+ \sum_{r=1}^{a(j)-1}H_{f^{rq}\nu}\Big(\alpha_0^{q-1}
 \Big|f^{rq}\big(\bigvee_{i=0}^{r-1}f^{-iq}\alpha_0^{q-1}\vee f^j\eta\big) \Big)\\
\le&  H_{\nu}(\alpha_0^{q-1}|f^j\eta)
+ \sum_{r=1}^{a(j)-1}H_{f^{rq}\nu}(\alpha_0^{q-1}|f\a^u).
\end{aligned}
\end{equation}
Also,
\begin{equation}\label{fVP2}
H_{\nu}(\bigvee_{r=0}^{a(j)-1}f^{-(rq+j)}\alpha_0^{q-1}|\eta)
=H_{f^j\nu}(\bigvee_{r=0}^{a(j)-1}f^{-rq}\alpha_0^{q-1}|f^j\eta).
\end{equation}
Replacing $\nu$ by $\nu_n$ and $f^j\nu_n$ in \eqref{fVP2} and  \eqref{fVP1}
respectively we get
\begin{equation*}
\begin{aligned}
&\log N^u(f,\e,n,x,\delta)=H_{\nu_n}(\alpha_0^{n-1}|\eta) 
=H_{\nu_n}\big(\bigvee_{r=0}^{a(j)-1}f^{-(rq+j)}\alpha_0^{q-1} \vee
   \bigvee_{t\in S_j}f^{-t}\alpha|\eta\big)\\
&\leq \sum_{t\in S_j}H_{\nu_n}(f^{-t}\alpha|\eta)
 +H_{\nu_n}\big(\bigvee_{r=0}^{a(j)-1}f^{-rq-j}\alpha_0^{q-1}|\eta\big)\\
&\leq \sum_{t\in S_j}H_{\nu_n}(f^{-t}\alpha|\eta)
 +H_{f^j\nu_n}\big(\bigvee_{r=0}^{a(j)-1}f^{-rq}\alpha_0^{q-1}|f^j\eta\big)\\
&\leq \sum_{t\in S_j}H_{\nu_n}(f^{-t}\alpha|\eta)
+ H_{f^j\nu_n}\big(\alpha_0^{q-1}|f^j\eta\big)
+ \sum_{r=1}^{a(j)-1}H_{f^{rq+j}\nu_n}(\alpha_0^{q-1}|f\a^u).
\end{aligned}
\end{equation*}
It is clear that $\text{card} S _j \leq 2q$.
Denote by $d$ the number of elements of $\a$.
Summing the inequalities over $j$ form $0$ to $q-1$ and
dividing by $n$, by Proposition~\ref{Paffine} we get
\begin{equation}\label{e:mu}
\begin{aligned}
&\frac{q}{n}\log N^u(f,\e,n,x,\delta) \\
\leq &\frac{1}{n}\sum_{j=0}^{q-1} \sum_{t\in S_j}H_{\nu_n}(f^{-t}\alpha|\eta)
   +\frac{1}{n}\sum_{j=0}^{q-1}H_{f^j\nu_n}(\alpha_0^{q-1}|f^j\eta)
   +\frac{1}{n}\sum_{i=0}^{n-1}H_{f^i\nu_n}(\alpha_0^{q-1}|f\a^u)\\
\leq &\frac{2q^2}{n}\log d
+\frac{1}{n}\sum_{j=0}^{q-1}H_{f^j\nu_n}(\alpha_0^{q-1}|f^j\eta)
 +H_{\mu_n}(\alpha_0^{q-1}|f\a^u).
\end{aligned}
\end{equation}
Let $\{n_k\}$ be a sequence of natural numbers such that
\begin{enumerate}
  \item $\mu_{n_k} \to \mu$ as $k\to \infty$;
  \item $\disp \lim_{k\to \infty}\frac{1}{n_k}\log N^u(f,\e,n_k,x,\delta)
     = \limsup_{n\to \infty}\frac{1}{n}\log N^u(f,\e,n,x,\delta)$.
\end{enumerate}
Since $\mu(\partial \alpha)=0$, and $\mu$ is invariant,
$\mu(\partial \alpha_0^{q-1})=0$ for any $q\in {\mathbb N}$.
Hence by Proposition~\ref{Pusc},
$$
\limsup_{k \to \infty}H_{\mu_{n_k}}(\alpha_0^{q-1}|f\a^u)
\leq H_\mu(\alpha_0^{q-1}|f\a^u).
$$
Thus replacing $n$ by $n_k$ in \eqref{e:mu} and letting $k\to \infty$,
we get
$$
qh^u_{\text{top}}(f, \overline{W^u(x,\delta)})
\leq H_{\mu}(\alpha_0^{q-1}|f\a^u).$$
By Corollary~A.2,
\begin{equation*}
h^u_{\text{top}}(f, \overline{W^u(x,\delta)})
\leq \lim_{q \to \infty}\frac{1}{q}H_{\mu}(\alpha_0^{q-1}|f\a^u)
=h_\mu(f, \a|f\a^u).
\end{equation*}
We may choose $\alpha\in \mathcal{P}$ such that $f\alpha^u\in \mathcal{P}^u$.
By Theorem~A, $h_\mu(f, \alpha|f\alpha^u)=h_\mu(f|f\alpha^u)=h_\mu^u(f)$.
Thus $h_\mu^u(f)\geq h^u_{\top}(f)-\rho$.
Since $\rho$ is arbitrary, we get the first equation of the theorem.

We prove the second equation in the theorem.

Let $\rho>0$ be sufficiently small. Then there exists an invariant measure $\mu$ such that
$h_\mu^u(f) > h^u_{\text{top}}(f)-\rho/2$. By \eqref{e:ergodiccomponent}, there exists an ergodic measure $\nu$ such that
$$h_{\nu}^u(f)>h_\mu^u(f)-\rho/2> h^u_{\text{top}}(f)-\rho.$$
Since $\rho$ is arbitrary, we have
$\disp h^u_{\text{top}}(f)=\sup\{h_{\nu}^u(f): \nu \in \mathcal{M}^e_f(M)\}$.
\end{proof}

\ \
\\[-2mm]
\textbf{Acknowledgement.} The authors would like to thank the referee for valuable suggestions. Y. Hua is supported by NSFC No. 11401133 and W. Wu is supported by NSFC No. 11701559.

\end{document}